\newcolumntype{C}[1]{>{\centering\arraybackslash}p{#1}}
\definecolor{thelinkcolor}{RGB}{0,0,150}
    \def\cl@chapter{\@elt {theorem}}
\crefname{equation}{}{}
\crefname{theorem}{Theorem}{Theorems}
\crefname{corollary}{Corollary}{Corollaries}
\crefname{example}{Example}{Examples}
\crefname{lemma}{Lemma}{Lemmas}
\crefname{proposition}{Proposition}{Propositions}
\crefname{figure}{Figure}{Figures}
\crefname{table}{Table}{Tables}
\crefname{section}{Section}{Sections}
\crefname{appendix}{Appendix}{Appendices}
\Crefname{equation}{}{}
\Crefname{theorem}{Theorem}{Theorems}
\Crefname{corollary}{Corollary}{Corollaries}
\Crefname{example}{Example}{Examples}
\Crefname{lemma}{Lemma}{Lemma}
\Crefname{proposition}{Proposition}{Proposition}
\Crefname{figure}{Figure}{Figures}
\Crefname{table}{Table}{Tables}
\Crefname{section}{Section}{Sections}
\Crefname{appendix}{Appendix}{Appendices}
\setlist[itemize]{itemindent=0ex,itemsep=-0.5ex,leftmargin=2ex,topsep=5pt}
\setlist[enumerate]{label={\arabic*)},leftmargin=*,nolistsep,noitemsep,topsep=1ex}
\newcommand{\abs}[1]{\left\vert #1 \right\vert}
\newcommand{\tr}{{{\mathsf T}}}
\newcommand\xqed[1]{\leavevmode\unskip\penalty9999 \hbox{}\nobreak\hfill\quad\hbox{#1}}
\newcommand\markendexample{\xqed{{\small$\blacksquare$}}}
\definecolor{fwFillBlue}{rgb}{0.803921580314636   0.878431379795074   0.968627452850342}
\definecolor{fwFillRed}{rgb}{0.992156863212585   0.917647063732147   0.796078443527222}
\definecolor{fwFillGreen}{rgb}{0.839215695858002   0.909803926944733   0.850980401039124}
\definecolor{fwFillMagenta}{rgb}{0.937254905700684   0.866666674613953   0.866666674613953}
\definecolor{fwLineBlue}{rgb}{0.000000000000000   0.447058826684952   0.741176486015320}
\definecolor{fwLineRed}{rgb}{0.850980401039124   0.325490206480026   0.098039217293262}
\definecolor{fwLineGreen}{rgb}{0.000000000000000   0.498039215803146   0.000000000000000}
\definecolor{fwLineMagenta}{rgb}{0.494117647409439   0.184313729405403   0.556862771511078}
\definecolor{mygreen}{RGB}{77,175,74}
\definecolor{myred}{RGB}{228,26,28}
\definecolor{myblue}{RGB}{55,126,184}
\definecolor{matlabgray}{RGB}{127,127,127}
\definecolor{matlabblue}{RGB}{0,113,188}
\definecolor{matlabred}{RGB}{216,82,24}
\definecolor{matlabgreen}{rgb}{0.4660,0.6740,0.1880}
\definecolor{matlabcyan}{rgb}{0.3010,0.7450,0.9330}   
\definecolor{matlabyellow}{rgb}{0.9290,0.6940,0.1250}
\definecolor{matlaborange}{RGB}{255,153,102}
\definecolor{matlabpurple}{rgb}{0.4940,0.1840,0.5560}
\definecolor{matlabsafered}{RGB}{215,25,28}
\definecolor{matlabsafegreen}{RGB}{171,221,164}
\definecolor{lightbrown}{RGB}{149,99,99}
\definecolor{darkbrown}{RGB}{82,48,48}
\newcommand\solidrule[1][10pt]{\rule[0.5ex]{#1}{1.5pt}}
\DeclareMathOperator{\Diag}{Diag}							
\newtheorem{theorem}{Theorem}
\newtheorem{lemma}{Lemma}
\newtheorem{proposition}{Proposition}
\newtheorem{corollary}{Corollary}
\theoremstyle{definition}
\newtheorem{example}{Example}
\newtheorem{remark}{Remark}
\numberwithin{equation}{section}
\numberwithin{theorem}{section}
\numberwithin{corollary}{section}
\numberwithin{lemma}{section}
\numberwithin{proposition}{section}
\numberwithin{definition}{section}
\numberwithin{remark}{section}
\numberwithin{example}{section}
\newcommand{\subalign}[1]{%
	\vcenter{%
		\Let@ \restore@math@cr \default@tag
		\baselineskip\fontdimen10 \scriptfont\tw@
		\advance\baselineskip\fontdimen12 \scriptfont\tw@
		\lineskip\thr@@\fontdimen8 \scriptfont\thr@@
		\lineskiplimit\lineskip
		\ialign{\hfil$\m@th\scriptstyle##$&$\m@th\scriptstyle{}##$\crcr
			#1\crcr
		}%
	}
}
\newcommand{\pushright}[1]{\ifmeasuring@#1\else\omit\hfill$\displaystyle#1$\fi\ignorespaces}
\newcommand*\fsize{\dimexpr\f@size pt\relax}
\newcommand{\mysquare}[2]{%
	\protect\begin{tikzpicture}%
	\protect\draw[thick,color=#1,fill=#1] (0,0) -- (#2,0) -- (#2,#2) -- (0,#2) -- (0,0);
	\protect\end{tikzpicture}%
}        
\title{\LARGE \bfseries Sum-of-squares chordal decomposition of polynomial matrix inequalities}
\author[1]{Yang Zheng\thanks{
    \href{mailto:zhengy@eng.ucsd.edu}{zhengy@eng.ucsd.edu}}}
\author[2]{Giovanni Fantuzzi\thanks{\href{mailto:giovanni.fantuzzi10@imperial.ac.uk}{giovanni.fantuzzi10@imperial.ac.uk}}}
\affil[1]{Department of Electrical and Computer Engineering, University of California San Diego, CA, 92093, US.}
\affil[2]{Department of Aeronautics, Imperial College London, London, SW7 2AZ, UK.}
\begin{document}

\maketitle
\begin{abstract}
\noindent
We prove decomposition theorems for sparse positive (semi)definite polynomial matrices that can be viewed as sparsity-exploiting versions of the Hilbert--Artin, Reznick, Putinar, and Putinar--Vasilescu Positivstellens\"atze.
First, we establish that a polynomial matrix $P(x)$ with chordal sparsity is positive semidefinite for all $x\in \mathbb{R}^n$ if and only if there exists a sum-of-squares (SOS) polynomial $\sigma(x)$ such that $\sigma P$ is a sum of sparse SOS matrices. 
Second, we show that setting $\sigma(x)=(x_1^2 + \cdots + x_n^2)^\nu$ for some integer $\nu$ suffices if $P$ is homogeneous and positive definite globally.
Third, we prove that if $P$ is positive definite on a compact semialgebraic set $\mathcal{K}=\{x:g_1(x)\geq 0,\ldots,g_m(x)\geq 0\}$ satisfying the Archimedean condition, then $P(x) = S_0(x) + g_1(x)S_1(x) + \cdots + g_m(x)S_m(x)$ for matrices $S_i(x)$ that are sums of sparse SOS matrices. 
Finally, if $\mathcal{K}$ is not compact or does not satisfy the Archimedean condition, we obtain a similar decomposition for $(x_1^2 + \ldots + x_n^2)^\nu P(x)$ with some integer $\nu\geq 0$ when $P$ and $g_1,\ldots,g_m$ are homogeneous of even degree. Using these results, we find sparse SOS representation theorems for polynomials that are quadratic and correlatively sparse in a subset of variables, and we construct new convergent hierarchies of sparsity-exploiting SOS reformulations for convex optimization problems with large and sparse polynomial matrix inequalities.
Numerical examples demonstrate that these hierarchies can have a significantly lower computational complexity than traditional ones.
\end{abstract}

\begin{small}\noindent
\textbf{Keywords.~}
Polynomial optimization, polynomial matrix inequalities, chordal decomposition
\end{small}


\section{Introduction}
\label{s:intro}

Many control problems for systems of ordinary differential equations can be posed as convex optimization problems with matrix inequality constraints that must hold on a prescribed portion of the state space~\cite{chesi2010lmi,lasserre2010moments,henrion2011inner,scherer2006lmi}. For differential equations with polynomial right-hand side, these problems often take the generic form
%
\begin{equation}\label{e:infinite-sdp-intro}
B^* := \inf_{\lambda \in \mathbb{R}^\ell} \quad 
b(\lambda) \quad
\text{s.t.} \quad P(x,\lambda) := P_0(x) - \sum_{i=1}^\ell P_i(x)\lambda_i \succeq 0 \quad\forall x \in \mathcal{K},
\end{equation}
where 
$b:\mathbb{R}^\ell \to \mathbb{R}$ is a convex
cost function,
$P_0,\ldots,P_\ell$ are $m \times m$ symmetric polynomial matrices depending on the system state $x \in \mathbb{R}^n$, and
\begin{equation}
\label{e:semialgebgraic-set}
\mathcal{K} = \left\{ x \in \mathbb{R}^n:\; g_1(x)\geq 0,\, \ldots,\, g_q(x) \geq 0 \right\}
\end{equation}
is a basic semialgebraic set defined by inequalities on fixed polynomials $g_1,\,\ldots,\,g_q$. There is no loss of generality in considering only inequality constraints because any equality $g(x)=0$ can be replaced by the two inequalities $g(x)\geq 0$ and $-g(x)\geq 0$.

Verifying polynomial matrix inequalities is generally an NP-hard problem~\cite{Murty1987}, which makes~\cref{e:infinite-sdp-intro} intractable. Nevertheless, feasible vectors $\lambda$ can be found via semidefinite programming if one imposes the stronger condition that
\begin{equation}\label{e:matrix-sos}
P(x,\lambda) = S_0(x) + g_1(x)S_1(x) + \cdots + g_q(x) S_q(x)
\end{equation}
for some $m \times m$ sum-of-squares (SOS) polynomial matrices $S_0,\,\ldots,\,S_q$. A polynomial matrix $S(x)$ is SOS if $S(x)=H(x)^\tr H(x)$ for some polynomial matrix $H(x)$, and it is well known~\cite{gatermann2004symmetry,kojima2003sums,parrilo2013semidefinite,scherer2006matrix} that linear optimization problems with SOS matrix variables can be reformulated as semidefinite programs (SDPs). However, the size of these SDPs increases very rapidly as a function of the size of $P$, its polynomial degree, and the number of independent variables $x$. Thus, even though in theory SDPs can be solved using algorithms with polynomial-time complexity~\cite{boyd2004convex,nemirovski2006advances,nesterov1994interior,vandenberghe1996semidefinite}, in practice reformulations of~\cref{e:infinite-sdp-intro} based on~\cref{e:matrix-sos} remain intractable because they require prohibitively large computational resources.

This work introduces new sparsity-exploiting SOS decompositions that can be used to efficiently certify the nonnegativity of large but sparse polynomial matrices, where ``sparse" means that many of their off-diagonal entries are identically zero. Specifically, let $P(x)$ be an $m \times m$ polynomial matrix and describe its sparsity using an undirected graph $\mathcal{G}$ with vertices $\mathcal{V}=\{1,\ldots,m\}$ and edges $\mathcal{E} \subseteq \mathcal{V} \times \mathcal{V}$ such that $P_{ij}(x)=P_{ji}(x)\equiv0$ when $i \neq j$ and $(i,j) \notin \mathcal{E}$. Motivated by chordal decomposition techniques for semidefinite programming~\cite{fukuda2001exploiting,nakata2003exploiting,sun2014decomposition,vandenberghe2015chordal,zheng2020chordal}, we ask whether the computational complexity of~\cref{e:matrix-sos} can be lowered by decomposing the matrices $S_0,\ldots,S_q$ into sums of sparse SOS matrices, with nonzero entries only on the principal submatrix indexed by one of the maximal cliques of the sparsity graph $\mathcal{G}$ of $P$. We prove that this clique-based decomposition exists if $\mathcal{G}$ is a chordal graph (meaning that, for every cycle of length larger than three, there is at least one edge in $\mathcal{E}$ connecting nonconsecutive vertices in the cycle), $\mathcal{K}$ is a compact set satisfying the so-called Archimedean condition, and $P(x)$ is strictly positive definite on $\mathcal{K}$ (cf. \cref{th:sparse-putinar}). This result is a sparsity-exploiting version of Putinar's Positivstellensatz~\cite{putinar1993positive} for polynomial matrices. We also give a sparse-matrix version of the Putinar--Vasilescu Positivstellensatz~\cite{PutinarVasilescu1999}, stating that $(x_1^2 + \cdots + x_n^2)^\nu P$ admits a clique-based SOS decomposition for some integer $\nu \geq 0$ if $P$ is homogeneous, has even degree, and is positive definite on a semialgebraic set $\mathcal{K}$ defined by homogeneous polynomials $g_1, \ldots, g_m$ of even degree (cf. \cref{th:sparse-putinar-vasilescu-homog}). This result applies even if $\mathcal{K}$ is noncompact. For the particular case of global nonnegativity, $\mathcal{K}\equiv \mathbb{R}^n$, we immediately recover a sparse-matrix version of Reznick'z Positivestellensatz~\cite{Reznick1995} (cf. \cref{th:sparse-reznick-homog}), and further prove a version of the Hilbert--Artin theorem~\cite{artin1927zerlegung} where the strict positivity of $P$ is weakened into positive semidefiniteness upon replacing the factor $(x_1^2 + \cdots + x_n^2)^\nu$ with a generic SOS polynomial (cf. \cref{th:weighted-chordal-decomposition}). \Cref{table:summary-results} summarizes our results and gives references to their counterparts for polynomials and general (dense) polynomial matrices.

\begin{table}
    \centering
    \caption{Summary of Positivstellens\"atze for polynomials, polynomial matrices, and polynomial matrices with structural sparsity.}
    \label{table:summary-results}
    \begin{tabular}{c|ccc}
    \toprule
        \multirow{ 2}{*}{Positivstellenatz} & \multirow{ 2}{*}{Polynomials}  & {General polynomial} & {Sparse polynomial}\\
         &   & {matrices} & {matrices}\\
        \midrule
        Hilbert--Artin & Artin~\cite{artin1927zerlegung} & Du~\cite{Du2017} & \cref{th:weighted-chordal-decomposition}\\ 
        Reznick & Reznick~\cite{Reznick1995} & Dinh \textit{et al.}~\cite{Dinh2021} & \cref{th:sparse-reznick-homog}\\ 
        Putinar & Putinar~\cite{putinar1993positive} & Scherer \& Hol~\cite{scherer2006matrix} & \cref{th:sparse-putinar}\\ 
        Putinar--Vasilescu & Putinar \& Vasilescu~\cite{PutinarVasilescu1999} & Dinh \textit{et al.}~\cite{Dinh2021} & \cref{th:sparse-putinar-vasilescu-homog}\\ 
        \bottomrule
    \end{tabular}
\end{table}

These chordal SOS decomposition theorems for polynomial matrices extend a classical chordal decomposition result for constant (i.e., independent of $x$) positive semidefinite (PSD) sparse matrices~\cite{agler1988positive}. The latter allows for significant computational gains when applied to large-scale sparse SDPs~\cite{sun2014decomposition,zheng2020chordal}, analysis and control of structured systems~\cite{andersen2014robust,Zheng2017Scalable}, and optimal power flow for large grids~\cite{andersen2014reduced,molzahn2013implementation}. Similarly, our decomposition results can be used to construct convergent hierarchies of sparsity-exploiting SOS reformulations of problem~\cref{e:infinite-sdp-intro} (cf. \cref{th:sos-program-convergence_putinar,th:sos-program-convergence-homog,th:sos-program-convergence-inhomog}), which produce a minimizing sequence of feasible vectors $\lambda$ and often have a significantly lower computational complexity compared to traditional approaches based on the ``dense'' weighted SOS representation~\cref{e:matrix-sos}. 


Finally, when the polynomial matrix $P$ in~\cref{e:infinite-sdp-intro} is not only sparse, but also depends only on a small set of $n$-variate monomials, our chordal SOS decompositions can be combined with known methods to exploit \textit{term sparsity}. These methods include facial reduction~\cite{reznick1978extremal,permenter2014basis,lofberg2009pre}, symmetry reduction~\cite{gatermann2004symmetry,Riener2013}, the exploitation of so-called correlative sparsity in the couplings between the independent variables~\cite{waki2006sums,lasserre2006convergent,grimm2007note,klep2019sparse,josz2018lasserre}, and the recent TSSOS, chordal-TSSOS and CS-TSSOS approaches to polynomial optimization~\cite{Wang2019term-sparsity,Wang2020chordal-tssos,Wang2020tssos,Wang2020cs-tssos}. Even though all of these methods have been developed for polynomial inequalities, rather than polynomial matrix inequalities, they can be applied directly upon reformulating the matrix inequality $P(x; \lambda) \succeq 0$ on $\mathcal{K}$ as the polynomial inequality $p(x,y)= y^\tr P(x; \lambda) y \geq0$ for all $x \in \mathcal{K}$ and $y \in \mathbb{R}^m$ with $\|y\|_\infty \leq 1$. In particular, if $P$ is structurally sparse, then $p(x,y)$ is correlatively term sparse with respect to $y$, and the techniques of~\cite{waki2006sums,lasserre2006convergent,grimm2007note,zheng2018sparse,Wang2020chordal-tssos,josz2018lasserre} can be used to check if it is nonnegative for all $x$ and $y$ of interest. This connection does not make our matrix decomposition theorems redundant: on the contrary, they reveal that correlatively sparse SOS decompositions for $p(x,y)$  depend only \textit{quadratically} on $y$ (\cref{corollary:globalcase,,corollary:global_even,corollary:compact}), which cannot be concluded from the available SOS decomposition theorems for scalar polynomials.

The rest of this work is structured as follows. 
\Cref{s:main-results} states our main chordal SOS decomposition results, while \cref{ss:applications-to-PMI-optimiz} explains how they can be used to formulate convergent hierarchies of sparsity-exploiting SOS reformulations of problem~\cref{e:infinite-sdp-intro}. 
\Cref{section:scalarcase} relates our decomposition results for polynomial matrices to the classical SOS techniques for correlatively sparse polynomials~\cite{waki2006sums,lasserre2006convergent,grimm2007note}.
Computational examples are presented in \cref{s:examples}. Our matrix decomposition results are proven in \cref{s:proofs}, and conclusions are offered in \cref{Section:Conclusion}. Appendices contain details of calculations and proofs of auxiliary results. 

\section{Chordal decomposition of polynomial matrices}
\label{s:main-results}
The main contributions of this work are chordal decomposition theorems for $n$-variate PSD polynomial matrices $P(x)$ whose sparsity is described by a chordal graph $\mathcal{G}$. After reviewing the connection between sparse matrices and graphs, as well as the standard chordal decomposition theorem for constant matrices, we present decomposition theorems that apply globally (\cref{ss:results-global}) and on basic semialgebraic sets (\cref{ss:results-semialgebraic}).

\subsection{Sparse matrices and chordal graphs}\label{s:preliminaries}

A graph $\mathcal{G}$ is a set of vertices $\mathcal{V}=\{1,\dots, m\}$ connected by a set of edges $\mathcal{E} \subseteq \mathcal{V} \times \mathcal{V}$. We call $\mathcal{G}$ \textit{undirected} if edge $(j,i)$ is identified with edge $(i,j)$, so edges are unordered pairs; \textit{complete} if $\mathcal{E} = \mathcal{V}\times \mathcal{V}$; \textit{connected} if there exists a path $(i,v_1),\,(v_1,v_2),\,\ldots,\,(v_k,j)$ between any two distinct vertices $i$ and $j$. We consider only undirected graphs, and focus mainly on the connected but not complete case.

A vertex $i \in \mathcal{V}$ of an undirected graph is called \textit{simplicial} if the subgraph induced by its neighbours is complete. A subset of vertices $\mathcal{C} \subseteq \mathcal{V}$ that are fully connected, meaning that $(i,j) \in \mathcal{E}$ for all pairs of (distinct) vertices $i,j \in \mathcal{C}$, is called a \textit{clique}.  A clique is \textit{maximal} if it is not contained in any other clique. Finally, a sequence of vertices $\{v_1, v_2, \ldots, v_k\} \subseteq \mathcal{V}$ with $k \geq 3$ is called a \textit{cycle} of length $k$ if $(v_i, v_{i+1}) \in \mathcal{E}$ for all $i = 1, \ldots, k-1$ and $(v_k, v_{1}) \in \mathcal{E}$. Any edge $(v_i,v_j)$ between nonconsecutive vertices in a cycle is known as a \textit{chord}, and a graph is said to be \textit{chordal} if all cycles of length $k\geq 4$ have at least one chord. Complete graphs, chain graphs, and trees are all chordal; other particular examples are illustrated in \cref{F:ChordalGraph}. Any non-chordal graph can be made chordal by adding appropriate edges to it; the process is known as a \textit{chordal extension}~\cite{vandenberghe2015chordal}.

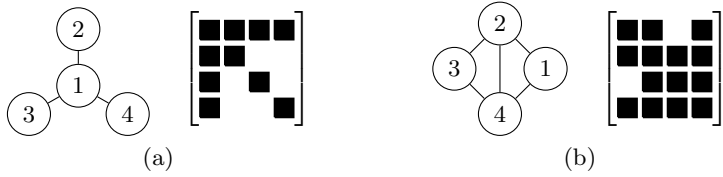
\begin{figure}[t]
	\centering
	\footnotesize
	\subfigure[]{
		\begin{tikzpicture}
		\node[draw, circle] (1) at (0,0) {\footnotesize 1};
		\node[draw, circle] (2) at (90:0.75cm) {\footnotesize 2};
		\node[draw, circle] (3) at (210:0.75cm) {\footnotesize 3};
		\node[draw, circle] (4) at (330:0.75cm) {\footnotesize 4};
		\draw (1)--(2);
		\draw (1)--(3);
		\draw (1)--(4);
		\end{tikzpicture}
		\begin{tikzpicture}
		\node at (0,0) {
			\setlength\arraycolsep{0.75pt}
			\def\arraystretch{0.6}
			$
			\begin{bmatrix}\\[-2.4pt]
			\mysquare{black}{7pt} & \mysquare{black}{7pt} & \mysquare{black}{7pt} & \mysquare{black}{7pt} \\    
			\mysquare{black}{7pt} & \mysquare{black}{7pt} & &\\    
			\mysquare{black}{7pt} && \mysquare{black}{7pt} &\\
			\mysquare{black}{7pt} &&&\mysquare{black}{7pt}\\[1.6pt]
			\end{bmatrix}
			$};
		\end{tikzpicture}
	}
	\hspace{30pt}
	\subfigure[]{
		\begin{tikzpicture}
		\node[draw, circle] (1) at (0:0.6cm) {\footnotesize 1};
		\node[draw, circle] (2) at (90:0.6cm) {\footnotesize 2};
		\node[draw, circle] (3) at (180:0.6cm) {\footnotesize 3};
		\node[draw, circle] (4) at (270:0.6cm) {\footnotesize 4};
		\draw (1)--(2);
		\draw (2)--(3);
		\draw (3)--(4);
		\draw (4)--(1);
		\draw (2)--(4);
		\end{tikzpicture}
		\begin{tikzpicture}
		\node at (0,0) {
			\setlength\arraycolsep{0.75pt}
			\def\arraystretch{0.6}
			$
			\begin{bmatrix}\\[-2.4pt]
			\mysquare{black}{7pt} & \mysquare{black}{7pt} &  & \mysquare{black}{7pt} \\    
			\mysquare{black}{7pt} & \mysquare{black}{7pt} & \mysquare{black}{7pt}&\mysquare{black}{7pt}\\    
			 &\mysquare{black}{7pt}& \mysquare{black}{7pt} & \mysquare{black}{7pt}\\
			\mysquare{black}{7pt} &\mysquare{black}{7pt}&\mysquare{black}{7pt} &\mysquare{black}{7pt}\\[1.6pt]
			\end{bmatrix}
			$};
		\end{tikzpicture}	
	}
	\caption{Connected, non-complete, chordal undirected graphs, and the sparse matrices they describe.
		(a)~Star graph with vertices $\mathcal{V}=\{1,2,3,4\}$ and edges $\mathcal{E}=\{(1,2),(1,3),(1,4)\}$. 
		(b)~Triangulated graph with vertices $\mathcal{V}=\{1,2,3,4\}$ and edges $\mathcal{E}=\{(1,2),(2,3),(3,4),(1,4),(2,4)\}$.}
	\label{F:ChordalGraph}
\end{figure}

The sparsity pattern of any $m \times m$ symmetric matrix $P$ can be described using an undirected graph $\mathcal{G}$  with vertices $\mathcal{V}=\{1,\ldots,m\}$ and an edge set $\mathcal{E}$ such that $(i,j)\notin \mathcal{E}$ if and only if $i\neq j$ and $P_{ij}=0$; see \cref{F:ChordalGraph} for two examples. We call $\mathcal{G}$ the sparsity graph of $P$. Dense principal submatrices of $P$ are indexed by cliques of $\mathcal{G}$, and maximal dense principal submatrices are indexed by maximal cliques.

For each maximal clique $\mathcal{C}_k$ of $\mathcal{G}$, define a matrix $E_{\mathcal{C}_k} \in \mathbb{R}^{|\mathcal{C}_k| \times m}$ as
\begin{equation} \label{E:IndexMatrix}
(E_{\mathcal{C}_k})_{ij} := 
\begin{cases}
1, &\text{if } \mathcal{C}_k(i) = j, \\ 
0, &\text{otherwise},
\end{cases}
\end{equation}
where $|\mathcal{C}_k|$ is the cardinality of $\mathcal{C}_k$ and $\mathcal{C}_k(i)$ is the $i$-th vertex in $\mathcal{C}_k$. 
This definition ensures that the operation $E_{\mathcal{C}_k}^\tr X_k E_{\mathcal{C}_k}$ ``inflates" a $\vert\mathcal{C}_k\vert \times \vert \mathcal{C}_k\vert $ matrix $X_k$ into a sparse $m\times m$ matrix with nonzero entries only in the submatrix indexed by $\mathcal{C}_k$; for example, if $m=3$, $\mathcal{C}_k=\{1,3\}$, and $S = \left[\begin{smallmatrix}\alpha & \beta\\ \beta & \gamma\end{smallmatrix}\right]$ we have
\begin{equation*}
E_{\mathcal{C}_k} = \begin{bmatrix} 1 & 0 & 0 \\
0 & 0 &1 \end{bmatrix} \qquad \text{and} \qquad 
E_{\mathcal{C}_k}^\tr S E_{\mathcal{C}_k} = \begin{bmatrix}
\alpha & 0 & \beta\\
0 & 0 & 0\\
\beta & 0 & \gamma
\end{bmatrix}.
\end{equation*}
The following classical result states that PSD matrices with a chordal sparsity graph admit a clique-based PSD decomposition.

\begin{theorem}[Agler \textit{et al.}~\cite{agler1988positive}]\label{th:ChordalDecompositionTheorem}
	A matrix $P$ whose sparsity graph is chordal and has maximal cliques $\mathcal{C}_1,\ldots,\mathcal{C}_t$ is positive semidefinite if and only if there exist positive semidefinite matrices 
	$S_k$ of size $\abs{\mathcal{C}_k}\times\abs{\mathcal{C}_k}$ 
	such that $$P = \sum_{k=1}^{t} E_{\mathcal{C}_k}^\tr S_k E_{\mathcal{C}_k}.$$
\end{theorem}

\begin{figure}
	\centering
	\footnotesize
	\begin{tikzpicture}
	\matrix (m) [matrix of nodes,row sep = 0em,	column sep = 2em, nodes={circle, draw=black}] at (0,0) {1 & 2 & 3\\};
	\draw (m-1-1) -- (m-1-2);
	\draw (m-1-2) -- (m-1-3);
	\end{tikzpicture}
	\caption{Chordal sparsity graph of the $3 \times 3$ matrices used in \cref{ex:3by3maxtirx,ex:nondecomposable-3d-matrix,ex:nondecomposable-multivariate,ex:sparse-putinar}. 
	}
	\label{F:LineGraph}
\end{figure}

\begin{example}\label{ex:3by3maxtirx}
    The PSD matrix
    $P = 
    \left[\begin{smallmatrix} 
    4 & 2 & 0 \\
    2 & 2 & 2 \\
    0 & 2 & 4
    \end{smallmatrix}\right]
    $
    has the sparsity graph illustrated in \cref{F:LineGraph}, which is chordal because it has no cycles. This graph has maximal cliques $\mathcal{C}_1 = \{1,2\}$ and $\mathcal{C}_2 = \{2,3\}$. The decomposition guaranteed by \cref{th:ChordalDecompositionTheorem} reads $P = E_{\mathcal{C}_1}^\tr S_1 E_{\mathcal{C}_1} + E_{\mathcal{C}_2}^\tr S_2 E_{\mathcal{C}_2}$ with 
    $S_1 = 
    \left[\begin{smallmatrix} 
    4 & 2\\
    2 & 1
    \end{smallmatrix}\right]
    $
    and
    $S_2 = 
    \left[\begin{smallmatrix} 
    1 & 2 \\
    2 & 4
    \end{smallmatrix}\right]
    $. \markendexample
\end{example}

Our goal is to derive versions of \cref{th:ChordalDecompositionTheorem} for sparse polynomial matrices that are positive semidefinite, either globally or on a basic semialgebraic set, where the matrices $S_k$ are polynomial and SOS. This allows us to build convergent hierarchies of sparsity-exploiting SOS reformulations for the optimization problem~\cref{e:infinite-sdp-intro}, which have a considerably lower computational complexity compared to standard (dense) ones.  
Throughout the paper, we assume without loss of generality that the sparsity graph $\mathcal{G}$ of $P(x)$ is connected and not complete. Complete sparsity graphs correspond to dense matrices, while disconnected ones correspond to matrices that have a block-diagonalizing permutation. Each irreducible diagonal block can be analyzed individually and has a connected (but possibly complete) sparsity graph by construction.

\subsection{Polynomial matrix decomposition on \texorpdfstring{$\mathbb{R}^n$}{Rn}}
\label{ss:results-global}

Let the polynomial matrix $P(x)$ be positive semidefinite for all $x \in \mathbb{R}^n$ and have a chordal sparsity graph with maximal cliques $\mathcal{C}_1, \ldots, \mathcal{C}_t$. Applying \cref{th:ChordalDecompositionTheorem} for each $x\in \mathbb{R}^n$ yields PSD matrices $S_1(x),\,\ldots,\,S_t(x)$ such that
\begin{equation}\label{e:basic-chordal-decomposition}
P(x) = \sum_{k=1}^{t} E_{\mathcal{C}_k}^\tr S_k(x) E_{\mathcal{C}_k}.
\end{equation}
Are these matrices always polynomial in $x$? Our first result gives a negative answer to this question for all matrix sizes $m \geq 3$, irrespective of the number $n$ of independent variables and of the sparsity graph of $P$.

\begin{proposition}\label{th:failure-basic-decomposition}
	Let $\mathcal{G}$ be a connected and not complete chordal graph with $m\geq 3$ vertices and maximal cliques $\mathcal{C}_1,\ldots,\mathcal{C}_t$. Fix any positive integer $n$. There exists an $n$-variate $m \times m$ polynomial matrix $P(x)$ with sparsity graph $\mathcal{G}$ that is strictly positive definite for all $x \in \mathbb{R}^n$, but cannot be written in the form~\cref{e:basic-chordal-decomposition} with positive semidefinite polynomial matrices $S_k(x)$.
\end{proposition}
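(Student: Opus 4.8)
The plan is to reduce the statement to the case of a three‑vertex path, build an explicit $3\times 3$ counterexample there, and then pad it up to size $m$ while retaining the prescribed sparsity graph. For the \emph{reduction}: since $\mathcal{G}$ is connected and not complete it contains two non‑adjacent vertices, and the first three vertices $v_0,v_1,v_2$ of a shortest path between them satisfy $\{v_0,v_1\},\{v_1,v_2\}\in\mathcal{E}$ but $\{v_0,v_2\}\notin\mathcal{E}$, hence induce a path; permuting vertices (which only conjugates $P$ by a permutation matrix and relabels the cliques, preserving positive definiteness, the sparsity pattern, and the existence of a decomposition) I may assume this induced path is $1-2-3$. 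The key point is that a decomposition of the form~\cref{e:basic-chordal-decomposition} restricts to its $\{1,2,3\}$ principal submatrix: each summand $E_{\mathcal{C}_k}^\tr S_k(x) E_{\mathcal{C}_k}$ is positive semidefinite and supported on the clique $\mathcal{C}_k$, so its $\{1,2,3\}$‑block is positive semidefinite and supported on $\mathcal{C}_k\cap\{1,2,3\}$, which — since no clique contains both $1$ and $3$ — lies in $\{1,2\}$ or in $\{2,3\}$. Grouping the summands accordingly shows that $P(x)[\{1,2,3\}]$ is a sum of a positive semidefinite polynomial matrix supported on $\{1,2\}$ and one supported on $\{2,3\}$. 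Thus it suffices to exhibit, for every $n\ge 1$, a $3\times 3$ polynomial matrix that is positive definite on $\mathbb{R}^n$, has a zero $(1,3)$ entry and nonzero $(1,2),(2,3)$ entries, and does not split in this way; this also disposes of the case $m=3$.

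For the \emph{counterexample}, write $P_0=\left[\begin{smallmatrix} q_{11} & q_{12} & 0\\ q_{12} & q_{22} & q_{23}\\ 0 & q_{23} & q_{33}\end{smallmatrix}\right]$. If $P_0\succ 0$ then $q_{11},q_{33}>0$, and a Schur‑complement computation shows that $P_0$ admits the splitting above if and only if there is a polynomial $c$ with $q_{12}^2/q_{11}\le c\le q_{22}-q_{23}^2/q_{33}$ pointwise on $\mathbb{R}^n$ (take $c$ to be the $(2,2)$‑entry of the $\{1,2\}$‑block). I would take $P_0$ depending only on $x_1$, for instance
\begin{equation*}
    P_0(x) = \begin{bmatrix} x_1^2+2 & 2x_1 & 0\\ 2x_1 & 5 & 2\\ 0 & 2 & 2x_1^2+3\end{bmatrix},
\end{equation*}
which one checks is positive definite on $\mathbb{R}^n$ (in fact $P_0(x)\succeq I_3$). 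Here the two ``envelopes'' are $L(x_1)=4x_1^2/(x_1^2+2)$ and $U(x_1)=5-4/(2x_1^2+3)$, both bounded functions of $x_1$ alone, with $\sup_{x_1\in\mathbb{R}}L=4$ and $\inf_{x_1\in\mathbb{R}}U=11/3$. Any polynomial $c$ squeezed between $L$ and $U$ is, for each fixed $(x_2,\dots,x_n)$, a bounded polynomial in $x_1$, hence independent of $x_1$, and then the constraints force $4\le c\le 11/3$, which is impossible. So no such $c$ exists, and $P_0$ is the required counterexample.

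For the \emph{padding} (case $m\ge 4$), place $P_0$ in the principal block indexed by $\{1,2,3\}$ and set
\begin{equation*}
    P(x) = \begin{bmatrix} P_0(x) & N\\ N^\tr & \rho\, I_{m-3}+W\end{bmatrix},
\end{equation*}
where $N$ and $W$ are constant $0/1$ matrices whose nonzero entries mark, respectively, the edges of $\mathcal{G}$ joining $\{1,2,3\}$ to the other vertices and the edges among the other vertices (with $W$ having zero diagonal), and $\rho$ is a constant. For any $\rho$ the sparsity graph of $P$ is exactly $\mathcal{G}$, and the $\{1,2,3\}$‑block of $P$ equals $P_0$, so by the reduction step $P$ admits no decomposition of the form~\cref{e:basic-chordal-decomposition}. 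Since $P_0(x)\succeq I_3$ implies $P_0(x)^{-1}\preceq I_3$, the Schur complement of the top‑left block is $\rho I_{m-3}+W-N^\tr P_0(x)^{-1}N\succeq(\rho-\|W\|-\|N\|^2)I_{m-3}$, which is positive definite as soon as $\rho>\|W\|+\|N\|^2$; hence $P(x)\succ 0$ for all $x\in\mathbb{R}^n$, completing the construction.

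The genuinely delicate point is the choice of $P_0$. Because $P_0$ has to be positive \emph{definite}, its diagonal is strictly positive, which rules out the naive obstruction of being forced to divide by a polynomial that vanishes somewhere; one must instead combine an asymptotic argument — a bounded univariate polynomial is constant — with the numerical gap $\sup_{x_1}L>\inf_{x_1}U$. Once $P_0$ is in hand the reduction and the padding are routine, the padding being clean precisely because $P_0$ can be taken uniformly positive definite, so that the Schur complement bound needs only a large constant $\rho$.
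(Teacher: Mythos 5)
Your proof is correct, and it follows the paper's high-level strategy (exhibit a $3\times 3$ counterexample with path sparsity, then embed it into a matrix with sparsity graph $\mathcal{G}$), but the three steps are executed differently from the paper, mostly to your advantage. For the reduction, the paper locates a simplicial vertex, groups a candidate decomposition into the clique-$\mathcal{C}_1$ part plus the rest, and runs Schur-complement estimates on the blocks to reproduce the two incompatible scalar inequalities of \cref{ex:nondecomposable-3d-matrix}; your observation that any decomposition of the form~\cref{e:basic-chordal-decomposition} restricts, on the principal submatrix indexed by an induced path $1$--$2$--$3$, to a sum of a PSD matrix supported on $\{1,2\}$ and one supported on $\{2,3\}$ is cleaner, needs only connectedness and non-completeness (chordality is never used), and makes the embedding step immediate. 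For the $3\times 3$ obstruction itself, the paper reuses the matrix of \cref{ex:nondecomposable-3d-matrix} with $k=1$ and rules out the middle polynomial $c(x)$ by a degree/coefficient-matching argument, whereas you use a different matrix whose two rational envelopes $L$ and $U$ are bounded functions of $x_1$ alone, so that any admissible polynomial $c$ must be constant in $x_1$ and then the numerical gap $\sup L = 4 > 11/3 = \inf U$ gives the contradiction; this is a genuinely different (and arguably more elementary) impossibility argument, and your verification that $P_0 \succeq I_3$ and that $q_{11},q_{33}>0$ makes the Schur-complement "if and only if" legitimate. Finally, your padding places nonzero constants on every edge of $\mathcal{G}$ and a large constant $\rho$ on the remaining diagonal, so the sparsity graph of $P$ is exactly $\mathcal{G}$ (the paper's padded matrix only has sparsity compatible with $\mathcal{G}$, i.e.\ zeros off the edges), with positive definiteness secured by the uniform bound $\rho I_{m-3}+W-N^\tr P_0(x)^{-1}N \succeq (\rho-\|W\|-\|N\|^2)I_{m-3}$; this works precisely because your $P_0$ is uniformly positive definite. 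What the paper's route buys in exchange is economy: its counterexample does double duty, since the same matrix also illustrates the weighted decomposition of \cref{th:weighted-chordal-decomposition} in \cref{ex:nondecomposable-3d-matrix}.
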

\noindent 
The proof of this proposition, given in \cref{ss:counterexample}, relies on the following example. 

\begin{example}\label{ex:nondecomposable-3d-matrix}
    The $3\times 3$ univariate polynomial matrix
    \begin{equation} \label{eq:example1}
        P(x) = \begin{bmatrix}
             k+1+x^2& x+x^2 & 0 \\
             x+x^2 & k+2x^2 & x-x^2 \\
             0 &  x-x^2    & k+1+x^2
            \end{bmatrix}
            = 
            \begin{bmatrix}
            x & 1\\
            x  & x\\
            1 &  -x
            \end{bmatrix}
            \begin{array}{@{}c@{}}
            \begin{bmatrix}
            x  & x & 1\\
            1 & x & -x
            \end{bmatrix}
            \\ \mathstrut
            \end{array}
            + k I_3
    \end{equation}
    is globally positive semidefinite and SOS for all $k\geq 0$, and it is strictly positive definite if $k >0$.    
    Let us try to search for a basic decomposition of the form~\cref{e:basic-chordal-decomposition}. We need to find two $2 \times 2$ positive semidefinite polynomial matrices $S_1$ and $S_2$ such that $P(x) = E_{\mathcal{C}_1}^\tr S_1(x) E_{\mathcal{C}_1} + E_{\mathcal{C}_2}^\tr S_2(x) E_{\mathcal{C}_2}$. Equivalently, we need to find polynomials $a$, $b$, $c$, $d$, $e$ and $f$ such that
    \begin{equation}\label{e:ex-decomposition-basic}
        P(x) = \begin{bmatrix}
             a(x)& b(x) & 0 \\
             b(x) & c(x) & 0 \\
             0 &  0   & 0
            \end{bmatrix}
            +
            \begin{bmatrix}
             0& 0 & 0 \\
             0 & d(x) & e(x) \\
             0 &  e(x)    & f(x)
            \end{bmatrix},
    \end{equation}
    and such that the two matrices on the right-hand sides are positive semidefinite.
    Fixing
    $a(x)=k+1+x^2$, $b(x) = x+x^2$, $e(x)=x-x^2$, $f(x) = k+1+x^2$ and $d(x) = k + 2x^2 - c(x)$ to ensure the equality, positive semidefiniteness requires the traces and determinants of the $2\times 2$ nonzero blocks to be nonnegative, i.e.
    \begin{subequations}
    \label{eq:ex1_trace}
    \begin{gather}
        c(x) \geq 0,  \label{eq:ex1_trace1}\\
        k + 2x^2 - c(x) \geq 0, \label{eq:ex1_trace2}\\
     (k+1+x^2)c(x) - (x^4 + 2x^3+x^2)\geq 0,  \label{eq:ex1_trace3}\\
     x^4 + 2x^3 + (3k+1)x^2 + k^2 + k - (k+1+x^2)c(x) \geq 0. \label{eq:ex1_trace4}
     \end{gather}
    \end{subequations}
    If $c(x)$ is to be nonnegative, then it must be quadratic; otherwise,~\cref{eq:ex1_trace2} cannot hold for all $x$. In particular, we must have $c(x) = \alpha + 2x + x^2$ for some scalar $\alpha$ to ensure that the coefficients of $x^4$ and $x^3$ in~\cref{eq:ex1_trace3} and~\cref{eq:ex1_trace4} vanish, otherwise at least one of these conditions cannot hold for all $x \in \mathbb{R}$.  
    Then,~\cref{eq:ex1_trace1} and~\cref{eq:ex1_trace2} become
    $x^2 + 2 x + \alpha \geq 0$ and $x^2 - 2 x - \alpha + k \geq 0$, and hold if and only if $1\leq \alpha \leq k-1$.
    A suitable $\alpha$ therefore exists when $k\geq 2$, while the decomposition~\cref{e:ex-decomposition-basic} fails to exist if $0\leq k <2$ even though $P(x)$ is PSD for all such values of $k$ (and, in fact, positive definite if $k\neq 0$). \markendexample
\end{example}

Clique-based decompositions similar to~\cref{e:basic-chordal-decomposition} with polynomial matrices $S_k(x)$, however, do exist after multiplying $P(x)$ by a suitable SOS polynomial $\sigma(x)$. The next result generalizes the Hilbert--Artin theorem on the representation of nonnegative polynomial as sums of squares of rational functions~\cite{artin1927zerlegung}. Importantly, it establishes that each $S_k(x)$ is not just positive semidefinite, but SOS.

\begin{theorem} \label{th:weighted-chordal-decomposition}
	Let $P(x)$ be an $m \times m$ positive semidefinite polynomial matrix whose sparsity graph is chordal and has maximal cliques $\mathcal{C}_1,\ldots,\mathcal{C}_t$. There exist an SOS polynomial $\sigma(x)$ and SOS matrices 
	$S_k(x)$ of size $|\mathcal{C}_k| \times |\mathcal{C}_k|$ such that
	\begin{equation} \label{E:DecompositionSparseCone}
	\sigma(x) P(x)= \sum_{k=1}^{t} E_{\mathcal{C}_k}^\tr S_k(x) E_{\mathcal{C}_k}.
	\end{equation}
\end{theorem}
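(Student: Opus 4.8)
The plan is to combine a purely elementary step from numerical linear algebra --- fraction‑free Gaussian elimination of $P(x)$ along a \emph{perfect elimination ordering} of the chordal graph $\mathcal{G}$ --- with Artin's solution of Hilbert's 17th problem for scalar polynomials. No matrix version of Hilbert's 17th problem will be needed.

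\emph{Step 1 (sparse elimination).} Since $\mathcal{G}$ is chordal it admits an ordering $v_1,\dots,v_m$ of its vertices that is a perfect elimination ordering, so that for each $k$ the closed neighbourhood of $v_k$ in the induced subgraph $\mathcal{G}[\{v_k,\dots,v_m\}]$ is a clique, hence contained in some maximal clique $\mathcal{C}_{j(k)}$ of $\mathcal{G}$. Put $M_0 := P$ and, inductively, $M_k := a_k M_{k-1} - c_k c_k^\tr$ where $a_k := (M_{k-1})_{v_k v_k}$ is the pivot and $c_k$ is the $v_k$‑th column of $M_{k-1}$; this is one step of fraction‑free Gaussian elimination, so each $M_k$ is a polynomial matrix. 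I would prove by induction on $k$ that: (i) $M_k(x) \succeq 0$ for all $x \in \mathbb{R}^n$, using that on the complement of the zero set of $a_k$ the matrix $M_k$ is a nonnegative multiple of a Schur complement of $M_{k-1}$, and then closing up by continuity of polynomials and closedness of the PSD cone; (ii) $a_k(x) \geq 0$ on $\mathbb{R}^n$, being a diagonal entry of a pointwise PSD matrix; (iii) rows and columns $v_1,\dots,v_k$ of $M_k$ vanish identically; and (iv) $M_k$ inherits the sparsity pattern of $\mathcal{G}[\{v_{k+1},\dots,v_m\}]$ --- this is where chordality is used, since the perfect elimination ordering ensures that the rank‑one update $c_k c_k^\tr$ creates no fill‑in.

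\emph{Step 1, conclusion.} By (iii), $M_m = 0$, and unwinding the recursion $a_k M_{k-1} = M_k + c_k c_k^\tr$ telescopes to the identity
\begin{equation*}
\Big(\prod_{k=1}^{m} a_k\Big)\, P \;=\; \sum_{k=1}^{m} \Big(\prod_{i>k} a_i\Big)\, c_k c_k^\tr .
\end{equation*}
By (iv), $c_k$ is supported on the closed neighbourhood of $v_k$ in $\mathcal{G}[\{v_k,\dots,v_m\}]$, hence on $\mathcal{C}_{j(k)}$, so $c_k c_k^\tr = E_{\mathcal{C}_{j(k)}}^\tr (\hat c_k \hat c_k^\tr) E_{\mathcal{C}_{j(k)}}$ for a polynomial vector $\hat c_k$. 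Grouping the terms by the clique index $j(k)$ gives $\sigma_0 P = \sum_{l=1}^{t} E_{\mathcal{C}_l}^\tr \hat S_l E_{\mathcal{C}_l}$, where $\sigma_0 := \prod_k a_k$ is a nonnegative polynomial (product of the nonnegative pivots) and each $\hat S_l$ is a nonnegative combination of rank‑one polynomial matrices $\hat c_k \hat c_k^\tr$, hence a PSD polynomial matrix of size $|\mathcal{C}_l| \times |\mathcal{C}_l|$.

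\emph{Step 2 (upgrading to SOS).} By Artin's theorem, each nonnegative polynomial $a_k$ admits an SOS polynomial $\mu_k$ with $\mu_k a_k$ SOS. Set $\mathcal{M} := \prod_k \mu_k$ (an SOS polynomial) and multiply the identity of Step 1 by $\mathcal{M}$. Then $\sigma := \mathcal{M}\sigma_0 = \prod_k (\mu_k a_k)$ is a product of SOS polynomials, hence SOS; and $\mathcal{M}\hat S_l$ is a sum of terms of the form (SOS polynomial)$\,\times \hat c_k \hat c_k^\tr$, each of which is an SOS matrix because multiplying the rank‑one SOS matrix $\hat c_k \hat c_k^\tr = (\hat c_k^\tr)^\tr (\hat c_k^\tr)$ by a scalar SOS yields an SOS matrix. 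Setting $S_l := \mathcal{M}\hat S_l$ gives $\sigma P = \sum_{l=1}^{t} E_{\mathcal{C}_l}^\tr S_l E_{\mathcal{C}_l}$ with $\sigma$ an SOS polynomial and the $S_l$ SOS matrices of the prescribed sizes.

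\emph{Main obstacle.} The delicate part is establishing (iv) of Step 1 --- that elimination along a perfect elimination ordering introduces no nonzeros outside the pattern of $\mathcal{G}$, so that every column $c_k$ remains supported inside a single maximal clique --- together with keeping the telescoping identity and the vanishing of $M_m$ precise when some pivots $a_k$ are identically zero or vanish on a hypersurface (so that the relevant rows/columns of $M_{k-1}$ are forced to vanish there and the step is still well defined). Everything else is routine: the pointwise positive semidefiniteness of the $M_k$, the nonnegativity of the pivots, and the algebra of products of SOS polynomials and matrices; Artin's theorem is invoked as a black box.
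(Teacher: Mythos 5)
Your route is essentially the paper's own proof in different packaging: the paper proves a "no fill-in'' diagonalization (its Proposition 5.4) by applying Schm\"udgen's one-step identity $u^4P = ZQZ^\tr$ recursively at simplicial vertices --- i.e.\ exactly your fraction-free elimination along a perfect elimination ordering, with chordality used only for the zero fill-in property --- and then invokes Artin's theorem on the resulting nonnegative scalar diagonal entries (your pivots) and regroups the rank-one columns by maximal cliques, just as in your Steps~1--2. Your telescoped identity $\bigl(\prod_k a_k\bigr)P=\sum_k\bigl(\prod_{i>k}a_i\bigr)c_kc_k^\tr$ replaces the paper's explicit $L\,\Diag(d_1,\dots,d_m)\,L^\tr$ factorization, but the ingredients and their roles are the same.

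Concerning the obstacle you flag: property (iv) is the standard zero fill-in fact and your one-line justification (the closed neighbourhood of $v_k$ in the remaining induced subgraph is a clique, hence inside a maximal clique) is all that is needed; pivots vanishing on a hypersurface are also harmless, since your pointwise Schur-complement argument plus density of $\{a_k\neq 0\}$ already gives $M_k\succeq 0$ everywhere. The only genuine degenerate case is a pivot with $a_k\equiv 0$: then $M_k=a_kM_{k-1}-c_kc_k^\tr\equiv 0$, all later pivots vanish identically, $\sigma_0=\prod_k a_k\equiv 0$, and your identity collapses to the vacuous $0=0$; this really occurs, e.g.\ for rank-deficient PSD polynomial matrices with chain sparsity such as $P=(1,x_1,0)^\tr(1,x_1,0)+(0,x_2,1)^\tr(0,x_2,1)$, where $M_2\equiv 0$ forces $a_3\equiv 0$ even though no diagonal entry of $P$ vanishes. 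The fix is short and you should state it: if $a_k\equiv 0$, pointwise positive semidefiniteness of $M_{k-1}$ forces its entire row and column $v_k$ to vanish identically, so skip that step (set $M_k:=M_{k-1}$, contribute no rank-one term, and omit $a_k$ from the product); properties (i)--(iv) and the telescoping survive, and $\sigma_0$, now a product of pivots that are not identically zero, is a nonzero nonnegative polynomial, after which your Artin step goes through verbatim (noting, as you implicitly do, that $\mathcal{M}\prod_{i>k}a_i=\bigl(\prod_{i\le k}\mu_i\bigr)\prod_{i>k}(\mu_ia_i)$ is a product of SOS polynomials, hence SOS).
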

\noindent
The proof, given in \cref{ss:proof-weighted-chordal-decomposition}, extends a constructive proof of \cref{th:ChordalDecompositionTheorem} for standard PSD matrices with chordal sparsity~\cite{kakimura2010direct} using Schm\"udgen's diagonalization procedure for polynomial matrices~\cite{Schmudgen2009noncommutative} and the Hilbert--Artin theorem~\cite{artin1927zerlegung}. 

\begin{example}
    Consider once again the polynomial matrix $P(x)$ from \cref{ex:nondecomposable-3d-matrix}. Inequalities~{(\ref{eq:ex1_trace}a--d)} hold for the rational function $c(x) = (1+x)^2x^2(k+1+x^2)^{-1}$. We can therefore decompose
    \begin{equation}\label{e:rational-decomposition-P-example}
    P(x) = 
    (1+k+x^2)^{-1}
    \left[ E_{\mathcal{C}_1}^\tr S_1(x) E_{\mathcal{C}_1} + E_{\mathcal{C}_2}^\tr S_2(x) E_{\mathcal{C}_2} \right]
    \end{equation}
    where,  by construction, the polynomial matrices 
    \begin{align*}
        S_1(x) &:= \begin{bmatrix}
            (k+1+x^2)^2 & (k+1+x^2)(x+x^2)\\
            (k+1+x^2)(x+x^2) & (1+x)^2 x^2
            \end{bmatrix} 
        \\
        S_2(x) &:= \begin{bmatrix}
            k^2 + k + 3k x^2 + (1-x)^2x^2 & (k+1+x^2)(x-x^2) \\
            (k+1+x^2)(x-x^2)    & (k+1+x^2)^2
            \end{bmatrix} 
    \end{align*}
    are PSD for all $k \geq 0$. They are also SOS because the two concepts are equivalent for univariate polynomial matrices~\cite{aylward2007explicit}. 
    Rearranging~\cref{e:rational-decomposition-P-example} yields the decomposition of $P$ guaranteed by \cref{th:weighted-chordal-decomposition} with $\sigma(x) = k+1 + x^2$. \markendexample 
\end{example}

If $P(x)$ and its highest-degree homogeneous part are strictly positive definite on $\mathbb{R}^n$ and $\mathbb{R}^n\setminus\{0\}$, respectively, one can fix either $\sigma(x)=\|x\|^{2\nu}$ or $\sigma(x)=(1 +\|x\|^2)^{\nu}$ for a sufficiently large integer $\nu \geq 0$, where $\|x\|^2 := x_1^2 + \cdots + x_n^2$. Precisely, we have the following versions of Reznick's Positivstellensatz~\cite{Reznick1995} for sparse polynomial matrices, which follow from more general SOS chordal decomposition results on semialgebraic sets stated in the next section (cf. \cref{th:sparse-putinar-vasilescu-homog,th:sparse-putinar-vasilescu}).

\begin{theorem}\label{th:sparse-reznick-homog}
	Let $P(x)$ be an $m \times m$ homogeneous polynomial matrix whose sparsity graph is chordal and has maximal cliques $\mathcal{C}_1,\ldots,\mathcal{C}_t$. If $P$ is strictly positive definite on $\mathbb{R}^n \setminus\{0\}$, there exist an integer $\nu \geq 0$ and homogeneous SOS matrices $S_{k}(x)$ of size $|\mathcal{C}_k| \times |\mathcal{C}_k|$ such that
	\begin{equation}\label{e:sparse-reznick-homog}
	\|x\|^{2\nu} P(x) = \sum_{k=1}^t E_{\mathcal{C}_k}^\tr S_{k}(x) E_{\mathcal{C}_k}.
	\end{equation}
\end{theorem}

\begin{corollary}\label{th:sparse-reznick}
	Let $P(x) = \sum_{\abs{\alpha}\leq 2d} P_\alpha x^\alpha$ be an inhomogeneous $m \times m$ polynomial matrix of even degree $2d$ whose sparsity graph is chordal and has maximal cliques $\mathcal{C}_1,\ldots,\mathcal{C}_t$. If $P$ is strictly positive definite on $\mathbb{R}^n$ and its highest-degree homogeneous part $\sum_{\abs{\alpha}=2d}P_\alpha x^\alpha$ is strictly positive definite on $\mathbb{R}^n\setminus \{0\}$, there exist an integer $\nu \geq 0$ and SOS matrices $S_{k}(x)$ of size $|\mathcal{C}_k| \times |\mathcal{C}_k|$ such that
	\begin{equation}\label{e:sparse-reznick}
	(1+\|x\|^2)^\nu P(x) = \sum_{k=1}^t E_{\mathcal{C}_k}^\tr S_{k}(x) E_{\mathcal{C}_k}.
	\end{equation}
\end{corollary}

\begin{example} \label{ex:nondecomposable-multivariate}
    Let $q(x) = x_1^2 x_2^4 + x_1^4 x_2^2 - 3 x_1^2 x_2^2 + 1$ be the Motzkin polynomial~\cite{Motzkin1967}, which is nonnegative but not SOS~\cite[Example~3.7]{Laurent2009}. The polynomial matrix
    \begin{equation} \label{eq:counterex2}
        P(x) = \begin{bmatrix} 0.01(1+x_1^6+x_2^6)+q(x) & -0.01x_1 & 0 \\
                            -0.01x_1 & x_1^6+x_2^6+1 & -x_2 \\
                            0& -x_2 & x_1^6 + x_2^6 + 1 \end{bmatrix}
    \end{equation}
    is strictly positive definite on $\mathbb{R}^2$ (see \cref{app:motzkin-pd}), but is not SOS since $\varepsilon (1+x_1^6+x_2^6)+q(x)$ is not SOS unless $\varepsilon\gtrsim 0.01006$~\cite[Example~6.25]{Laurent2009}.  Nevertheless, since the highest-degree homogeneous part of $P$ is also positive definite on $\mathbb{R}^2\setminus\{0\}$, \cref{th:sparse-reznick} guarantees that $P$ can be decomposed as in~\cref{e:sparse-reznick} for a large enough exponent $\nu$. Here $\nu=1$ suffices, and $(1+ \|x\|^2) P(x) =  E_{\mathcal{C}_1}^\tr S_1(x) E_{\mathcal{C}_1} + E_{\mathcal{C}_2}^\tr S_2(x) E_{\mathcal{C}_2}$ with
    \begin{subequations}
    	\begin{equation}\label{e:motzkin-S1}
    	S_1(x) = \begin{bmatrix}
    	(1+\|x\|^2)q(x) & 0 \\ 0 & 0
    	\end{bmatrix}
    	+
    	\frac{1+\|x\|^2}{100}\begin{bmatrix}
    	1+x_1^6+x_2^6 & -x_1 \\ -x_1 & 100 x_1^2
    	\end{bmatrix}
    	\end{equation}
    	and
    	\begin{equation}\label{e:motzkin-S2}
    	S_2(x) = (1+\|x\|^2)\begin{bmatrix}
    	1 - x_1^2 + x_1^6 + x_2^6   &   -x_2 \\ -x_2 & 1+x_1^6+x_2^6
    	\end{bmatrix}.
    	\end{equation}
    \end{subequations}
    To see that these two matrices are SOS, observe that the first addend on the right-hand side of~\cref{e:motzkin-S1} is SOS because $(1+\|x\|^2)q(x) = (1-x_1^2x_2^2)^2 + x_2^2(1-x_1^2)^2 + x_1^2(1-x_2^2)^2 + \tfrac14(x_1^3x_2 - x_1x_2^3)^2 + \tfrac34 (x_1^3x_2 + x_1x_2^3 - 2x_1x_2)^2$, the second addend on the right-hand side of~\cref{e:motzkin-S1} is SOS because
    \begin{equation*}
        \begin{bmatrix}1+x_1^6+x_2^6 & -x_1 \\ -x_1 & 100 x_1^2\end{bmatrix}
        = H(x)H(x)^\tr
        \quad\text{with}\quad
        H(x) = \begin{bmatrix}1 & x_1^3 & 0 & x_2^3 \\ -x_1 & 0 & \sqrt{99}x_1 & 0\end{bmatrix},
    \end{equation*}
    and the matrix on the right-hand side of~\cref{e:motzkin-S2} is the sum of two univariate PSD (hence, SOS) matrices: setting $k=2/(3\sqrt{3})$, we have
     \begin{equation*}
	     \hspace{15pt}
         \begin{bmatrix}
         1 - x_1^2 + x_1^6 + x_2^6   &   -x_2 \\ -x_2 & 1+x_1^6+x_2^6
         \end{bmatrix}
         \!\!=\!\!
         \begin{bmatrix}
         k - x_1^2 + x_1^6   &   0 \\ 0 & x_1^6
         \end{bmatrix}
         \!+\!
         \begin{bmatrix}
         1 - k + x_2^6   &   -x_2 \\ -x_2 & 1+x_2^6
         \end{bmatrix}.
         \hspace{5pt}
         \text{\xqed{{\small$\blacksquare$}}}
     \end{equation*}
     
\end{example}

\subsection{Polynomial matrix decomposition on semialgebraic sets}
\label{ss:results-semialgebraic}

We now turn our attention to SOS chordal decompositions on basic semialgebraic sets $\mathcal{K}$ defined as in~\cref{e:semialgebgraic-set}. We say that $\mathcal{K}$ satisfies the Archimedean condition if there exist SOS polynomials $\sigma_0(x),\,\ldots,\,\sigma_q(x)$ and a scalar $r$ such that
\begin{equation}\label{e:archimedean-condition}
 \sigma_0(x) + g_1(x) \sigma_1(x) + \cdots + g_q(x) \sigma_q(x) = r^2 - \|x\|^2.
\end{equation}
This condition implies that $\mathcal{K}$  is compact because $r^2 - \|x\|^2$ is positive on $\mathcal{K}$. The converse is not always true~\cite{lasserre2010moments}, but can be ensured by adding the redundant inequality $r^2 - \|x\|^2 \geq 0$ to the definition~\cref{e:semialgebgraic-set} of $\mathcal{K}$ for a sufficiently large $r$.

\Cref{th:sparse-putinar} below guarantees that if a polynomial matrix is strictly positive definite on a compact $\mathcal{K}$ satisfying the Archimedean condition, then it admits a chordal decomposition in terms of weighted sums of SOS matrices supported on the cliques of the sparsity graph, where the weights are exactly the polynomials $g_1,\,\ldots,\,g_q$ used in the semialgebraic definition~\cref{e:semialgebgraic-set} of $\mathcal{K}$. This result extends Putinar's Positivstellensatz~\cite{putinar1993positive} to sparse polynomial matrices, and can be considered a sparsity-exploiting version of a Positivstellensatz for general (dense) polynomial matrices (see~\cite[Theorem 2.19]{lasserre2015introduction_book} and~\cite[Theorem 2]{scherer2006matrix}). 


\begin{theorem}\label{th:sparse-putinar}
	Let $\mathcal{K}$ be a compact semialgebraic set defined as in~\cref{e:semialgebgraic-set} that satisfies the Archimedean condition~\cref{e:archimedean-condition}, and let $P(x)$ be a polynomial matrix whose sparsity graph is chordal and has maximal cliques $\mathcal{C}_1,\ldots,\mathcal{C}_t$. If $P$ is strictly positive definite on $\mathcal{K}$, there exist SOS matrices $S_{j,k}(x)$ of size $|\mathcal{C}_k| \times |\mathcal{C}_k|$ such that
	\begin{equation}\label{e:sparse-matrix-sos}
	P(x) = \sum_{k=1}^t E_{\mathcal{C}_k}^\tr \bigg( S_{0,k}(x) + \sum_{j=1}^q g_j(x)S_{j,k}(x) \bigg) E_{\mathcal{C}_k}.
	\end{equation}
\end{theorem}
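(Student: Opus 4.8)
The plan is to run an induction over the number $t$ of maximal cliques of the chordal sparsity graph $\mathcal{G}$, using the dense matrix Positivstellensatz of Scherer and Hol~\cite[Corollary 1]{scherer2006matrix} for the base case and the clique-tree structure reviewed in \cref{s:preliminaries} for the inductive step. When $t=1$ the sparsity graph is complete, $P(x)$ is a (possibly dense) $m\times m$ matrix, $E_{\mathcal{C}_1}=I_m$, and~\cref{e:sparse-matrix-sos} is exactly~\cite[Corollary 1]{scherer2006matrix}. For $t\ge 2$ I would fix a leaf clique $\mathcal{C}_t$ of a clique tree of $\mathcal{G}$, with separator $\mathcal{S}=\mathcal{C}_t\cap(\mathcal{C}_1\cup\cdots\cup\mathcal{C}_{t-1})$, residual $\mathcal{R}=\mathcal{C}_t\setminus\mathcal{S}$ (nonempty by the running intersection property), and $\mathcal{T}=\mathcal{V}\setminus\mathcal{C}_t$. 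Ordering the variables as $(\mathcal{R},\mathcal{S},\mathcal{T})$, chordal sparsity forces $P_{\mathcal{R}\mathcal{T}}\equiv 0$, so that
\[
P(x)=\begin{bmatrix} P_{\mathcal{R}\mathcal{R}} & P_{\mathcal{R}\mathcal{S}} & 0\\ P_{\mathcal{S}\mathcal{R}} & P_{\mathcal{S}\mathcal{S}} & P_{\mathcal{S}\mathcal{T}}\\ 0 & P_{\mathcal{T}\mathcal{S}} & P_{\mathcal{T}\mathcal{T}}\end{bmatrix}.
\]

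The heart of the argument is to split the shared block as $P_{\mathcal{S}\mathcal{S}}=W(x)+\bigl(P_{\mathcal{S}\mathcal{S}}(x)-W(x)\bigr)$ and set
\[
A(x)=\begin{bmatrix} P_{\mathcal{R}\mathcal{R}} & P_{\mathcal{R}\mathcal{S}} & 0\\ P_{\mathcal{S}\mathcal{R}} & W & 0\\ 0 & 0 & 0\end{bmatrix},\qquad
B(x)=\begin{bmatrix} 0 & 0 & 0\\ 0 & P_{\mathcal{S}\mathcal{S}}-W & P_{\mathcal{S}\mathcal{T}}\\ 0 & P_{\mathcal{T}\mathcal{S}} & P_{\mathcal{T}\mathcal{T}}\end{bmatrix},
\]
so that $P=A+B$, with $A$ supported on the single clique $\mathcal{C}_t$ and $B$ supported on $\mathcal{V}\setminus\mathcal{R}$ with a chordal sparsity graph whose maximal cliques are exactly $\mathcal{C}_1,\ldots,\mathcal{C}_{t-1}$. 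Because $P\succ 0$ on the compact set $\mathcal{K}$, its principal submatrix $P_{\mathcal{R}\mathcal{R}}$ is also positive definite there, and a Schur-complement computation shows that the $\mathcal{C}_t$-submatrix of $A$ is positive definite on $\mathcal{K}$ precisely when $W\succ P_{\mathcal{S}\mathcal{R}}P_{\mathcal{R}\mathcal{R}}^{-1}P_{\mathcal{R}\mathcal{S}}$ on $\mathcal{K}$, while the $(\mathcal{V}\setminus\mathcal{R})$-submatrix of $B$ is positive definite on $\mathcal{K}$ as soon as $W\preceq P_{\mathcal{S}\mathcal{R}}P_{\mathcal{R}\mathcal{R}}^{-1}P_{\mathcal{R}\mathcal{S}}+\tfrac12\eta I$, where $\eta>0$ is any lower bound, valid on $\mathcal{K}$, for the Schur complement of $P$ with respect to $P_{\mathcal{R}\mathcal{R}}$ (such $\eta$ exists since that Schur complement is positive definite, hence bounded below, on the compact set $\mathcal{K}$). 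The map $x\mapsto P_{\mathcal{S}\mathcal{R}}(x)P_{\mathcal{R}\mathcal{R}}(x)^{-1}P_{\mathcal{R}\mathcal{S}}(x)$ is continuous but not polynomial on $\mathcal{K}$; nevertheless, entrywise Stone--Weierstrass approximation on the compact set $\mathcal{K}$ supplies a \emph{polynomial} matrix $W(x)$ that is trapped strictly between these two bounds. For such a $W$, the polynomial matrix $A$ is supported on $\mathcal{C}_t$ and positive definite on $\mathcal{K}$, so~\cite[Corollary 1]{scherer2006matrix} gives $A=E_{\mathcal{C}_t}^\tr\bigl(S_{0,t}+\sum_{j}g_jS_{j,t}\bigr)E_{\mathcal{C}_t}$ with SOS matrices $S_{j,t}$, while the polynomial matrix $B$ is positive definite on $\mathcal{K}$ with one fewer clique, so the induction hypothesis applies. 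Adding the two decompositions yields~\cref{e:sparse-matrix-sos}.

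The hard part will be the inductive step, and specifically the realization that the Schur complement carves out an \emph{open} set of admissible choices for the shared block $W(x)$: this openness is exactly what allows one to realize the splitting with polynomial (rather than rational or merely continuous) matrices, and hence to avoid introducing a spurious polynomial multiplier in front of $P(x)$ — something that, in view of \cref{th:failure-basic-decomposition} and the role of the SOS weight $\sigma$ in \cref{th:weighted-chordal-decomposition}, might at first sight seem unavoidable for sparse matrices. A secondary point is the bookkeeping needed to verify that deleting the residual $\mathcal{R}$ leaves a chordal graph with maximal cliques $\mathcal{C}_1,\ldots,\mathcal{C}_{t-1}$; this uses only the clique-tree facts collected in \cref{s:preliminaries}.

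An alternative route — presumably the one behind the second proof given in the appendix — is to scalarize: identity~\cref{e:sparse-matrix-sos} is equivalent to a weighted SOS representation of the scalar polynomial $y^\tr P(x)y$ in the variables $(x,y)\in\mathbb{R}^n\times\mathbb{R}^m$, whose correlative sparsity graph is chordal with cliques indexed by $\mathcal{C}_1,\ldots,\mathcal{C}_t$, so that one could try to invoke the sparse scalar Positivstellensatz of~\cite{lasserre2006convergent,grimm2007note}. The difficulty there is that $y^\tr P(x)y$ vanishes at $y=0$, which forces one to work on a suitable compact slice in $y$ and then homogenize the resulting certificate in $y$ in order to recover a genuine matrix identity; I would favour the inductive argument above because it bypasses this homogenization step.
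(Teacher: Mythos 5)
Your proposal is correct and follows essentially the same route as the paper's proof in \cref{ss:proof-sparse-putinar}: both arguments peel off a clique-supported piece by splitting the shared block around the rational Schur-complement term, replace that term by a Weierstrass polynomial approximation (possible precisely because strict positive definiteness on the compact set $\mathcal{K}$ leaves an open margin, your $\eta$ and the paper's $\varepsilon$), apply the dense Scherer--Hol Positivstellensatz to the clique piece, and invoke an induction hypothesis on the remainder. The only substantive difference is bookkeeping: the paper inducts on the matrix size $m$, eliminating a single simplicial vertex per step with the scalar pivot $a(x)$ and one Cholesky step, whereas you induct on the number of maximal cliques $t$, eliminating the entire residual $\mathcal{R}$ of a leaf clique at once with the block pivot $P_{\mathcal{R}\mathcal{R}}(x)$ --- a block version of the same construction.
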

\noindent
The proof, given in \cref{ss:proof-sparse-putinar}, exploits the Cholesky algorithm for matrices with chordal sparsity, the Weierstrass polynomial approximation theorem, and the aforementioned Positivstellensatz for general polynomial matrices~\cite[Theorem~2]{scherer2006matrix}. 

\begin{example}\label{ex:sparse-putinar}
        The bivariate polynomial matrix
        \begin{equation} \label{eq:example_bivarite}
        P(x) :=
            \begin{bmatrix}
            1+2x_1^2-x_1^4 & x_1+x_1x_2-x_1^3 & 0\\
            x_1+x_1x_2-x_1^3 & 3+4x_1^2-3x_2^2 & 2x_1^2x_2-x_1x_2-2x_2^3\\
            0 & 2x_1^2x_2-x_1x_2-2x_2^3 & 1+x_2^2+x_1^2x_2^2-x_2^4
            \end{bmatrix}
    \end{equation}
    is not positive semidefinite globally (the first diagonal element is negative if $x_1$ is sufficiently large) but is strictly positive definite on the compact semialgebraic set 
    $\mathcal{K}=\{x \in\mathbb{R}^2:\;
    g_1(x) := 1 - x_1^2 \geq 0,\,
    g_2(x) := x_1^2 - x_2^2 \geq 0\}$. 
    This can be verified numerically by approximating the region of $\mathbb{R}^2$ where $P$ is positive definite (see \cref{f:bowtie-plot}), and an analytical certificate will be given below.
    \begin{figure}
        \centering
        \includegraphics[scale=0.65]{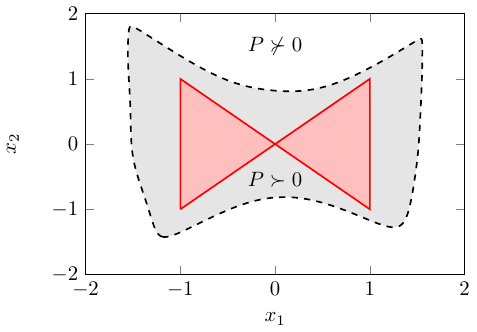}
        \caption{The semialgebraic set $\mathcal{K}$ considered in \cref{ex:sparse-putinar} (red shading, solid boundary), compared to the region of $\mathbb{R}^2$ where the matrix $P(x)$ in~\cref{eq:example_bivarite} is positive definite (grey shading, dashed boundary). On the boundary of this region, $P(x)$ is PSD but not definite.}
        \label{f:bowtie-plot}
    \end{figure}
    
    The semialgebraic set $\mathcal{K}$ satisfies the Archimedean condition~\cref{e:archimedean-condition} with $\sigma_0(x) = 0$, $\sigma_1(x)=2$, $\sigma_2(x)=1$ and $r=\sqrt{2}$. Therefore, \cref{th:sparse-putinar} guarantees that 
    \begin{equation}\label{e:example-putinar-decomposition}
    P(x) = 
    \sum_{k=1}^2
    E_{\mathcal{C}_k}^\tr \left[S_{0,k}(x) + g_1(x) S_{1,k}(x) + g_2(x) S_{2,k}(x) \right] E_{\mathcal{C}_k}
    \end{equation}
    for some $2 \times 2$ SOS matrices $S_{0,1}$, $S_{1,1}$, $S_{2,1}$, $S_{0,2}$, $S_{1,2}$ and $S_{1,2}$. Possible choices for these matrices are $S_{2,1}=0$, $S_{1,2}=0$ and
    \begin{align*}
        S_{0,1}(x) 
            &= I_2
             +\begin{bmatrix} x_1 \\ x_2 \end{bmatrix} 
              \begin{array}{@{}c@{}}\begin{bmatrix} x_1 & x_2\end{bmatrix} \\ \mathstrut\end{array}
        &
        S_{1,1}(x) 
        &= \begin{bmatrix} x_1 \\ 1 \end{bmatrix}
        \begin{array}{@{}c@{}}\begin{bmatrix} x_1 & 1 \end{bmatrix} \\ \mathstrut\end{array}
        \\
        S_{0,2}(x) 
            &= I_2
             +\begin{bmatrix} x_1 \\ -x_2 \end{bmatrix} 
              \begin{array}{@{}c@{}}\begin{bmatrix} x_1 & -x_2\end{bmatrix} \\ \mathstrut\end{array}
        &
        S_{2,2}(x) 
            &=\begin{bmatrix} 2 \\ x_2 \end{bmatrix}
             \begin{array}{@{}c@{}}\begin{bmatrix} 2 & x_2 \end{bmatrix}  \\ \mathstrut\end{array}.
    \end{align*}
    Since $S_{0,1}$ and $S_{0,2}$ are positive definite and all other addends in~\cref{e:example-putinar-decomposition} are PSD on $\mathcal{K}$,  we conclude in particular that $P(x)$ is positive definite on that set, as claimed initially.
    \markendexample
\end{example}

If $\mathcal{K}$ is not compact or does not satisfy the Archimedean condition, \cref{th:sparse-putinar} can be used to prove a similar decomposition result that applies to $(1+\|x\|^2)^\nu P$ with large enough exponent $\nu$, as long as $P$ has even degree and the behaviour of its leading term can be controlled. We start with the case in which $P$ is homogeneous and $\mathcal{K}$ is defined using homogeneous polynomial inequalities of even degree.

\begin{theorem}\label{th:sparse-putinar-vasilescu-homog}
	Let $\mathcal{K}$ be a semialgebraic set defined as in~\cref{e:semialgebgraic-set} with homogeneous polynomials $g_1,\ldots,g_q$ of even degree, and such that $\mathcal{K} \setminus\{0\}$ is nonempty. Let $P(x)$ be a homogeneous polynomial matrix of even degree whose sparsity graph is chordal and has maximal cliques $\mathcal{C}_1,\ldots,\mathcal{C}_t$. If $P$ is strictly positive definite on $\mathcal{K} \setminus\{0\}$, there exist an integer $\nu \geq 0$ and homogeneous SOS matrices $S_{j,k}(x)$ of size $|\mathcal{C}_k| \times |\mathcal{C}_k|$ such that
	\begin{equation}\label{e:sparse-putinar-vasilescu-homog}
	\|x\|^{2\nu} P(x) = \sum_{k=1}^t E_{\mathcal{C}_k}^\tr \bigg( S_{0,k}(x) + \sum_{j=1}^q g_j(x)S_{j,k}(x) \bigg) E_{\mathcal{C}_k}.
	\end{equation}
\end{theorem}
\noindent
This result, proven in \cref{ss:proof-sparse-putinar-vasilescu}, recovers Theorem 4 in~\cite{Dinh2021} when $P$ is dense.
If $P$ is not homogeneous, we find the following version of the Putinar--Vasilescu Positivstellensatz~\cite{PutinarVasilescu1999} for sparse polynomial matrices, which is a sparsity-exploiting formulation of a recent result for general (dense) matrices~\cite[Corollary 3]{Dinh2021}.

\begin{corollary}\label{th:sparse-putinar-vasilescu}
	Let $\mathcal{K}$ be a semialgebraic set defined as in~\cref{e:semialgebgraic-set}, and let $P(x)= \sum_{|\alpha|\leq 2d_0} P_{\alpha} x^{\alpha}$ be an inhomogeneous polynomial matrix of even degree $2d_0$ whose sparsity graph is chordal and has maximal cliques $\mathcal{C}_1,\ldots,\mathcal{C}_t$. If $P$ is strictly positive definite on $\mathcal{K}$ and 
	its highest-degree homogeneous part $\sum_{|\alpha| = 2d_0} P_{\alpha} x^{\alpha}$ is strictly positive definite on $\mathbb{R}^n \setminus \{0\}$, there exist an integer $\nu \geq 0$ and SOS matrices $S_{j,k}(x)$ of size $|\mathcal{C}_k| \times |\mathcal{C}_k|$ such that
	\begin{equation}\label{e:sparse-putinar-vasilescu}
	(1+\|x\|^2)^\nu P(x) = \sum_{k=1}^t E_{\mathcal{C}_k}^\tr \bigg( S_{0,k}(x) + \sum_{j=1}^q g_j(x)S_{j,k}(x) \bigg) E_{\mathcal{C}_k}.
	\end{equation}
\end{corollary}

\begin{proof}
Set $Q(x,y) = y^{2d_0}P(x/y)$, $d_j = \lceil\frac12 \deg(g_j) \rceil$, $h_j(x,y) = y^{2d_j} g_j(x/y)$ for all $j=1,\ldots,q$, and $\mathcal{K}' = \{(x,y): \,h_j(x,y) \geq 0,\, j=1,\ldots,q\}$. The polynomial matrix $Q$ and the polynomials $h_j$ are homogeneous of even degree, and satisfy $Q(x,1) = P(x)$ and $h_j(x,1) = g_j(x)$ for all $j = 1, \ldots, q$. 
%
Furthermore, $Q$ is positive definite on $\mathcal{K}'\setminus \{(0,0)\}$ because $Q(x,0)=\sum_{|\alpha| = 2d_0} P_{\alpha} x^{\alpha}$ is positive definite by assumption, while if $(x,y) \in \mathcal{K}'$ with $y\neq0$, then $x/y \in \mathcal{K}$ and $Q(x,y)$ is positive definite because so is $P(x/y)$.
%
Applying \cref{th:sparse-putinar-vasilescu-homog} to $Q$ and $\mathcal{K}'$, noting that $\|(x,y)\|^2 = y^2 + \|x\|^2$, setting $y=1$, and recalling that $Q(x,1) = P(x)$ yields~\cref{e:sparse-putinar-vasilescu}.
\end{proof}

\begin{remark}
	Setting $g_1 = \cdots = g_q \equiv 0$ in \cref{th:sparse-putinar-vasilescu-homog,th:sparse-putinar-vasilescu} immediately yields \cref{th:sparse-reznick-homog,th:sparse-reznick} for the global case $\mathcal{K}=\mathbb{R}^n$ (observe that a globally PSD homogeneous polynomial matrix must have even degree).
\end{remark}

\section{Convex optimization with sparse polynomial matrix inequalities}
\label{ss:applications-to-PMI-optimiz}
The decomposition results in \cref{ss:results-semialgebraic,ss:results-global} can be used to construct hierarchies of sparsity-exploiting SOS reformulations for the optimization problem~\cref{e:infinite-sdp-intro} that produce feasible vectors $\lambda$ and upper bounds on its optimal value $B^*$.

Specifically, fix any two integers $\nu$ and $d$ satisfying $\nu \geq 0$ and 
$2d \geq \max\{\deg(P), \deg(g_1),$ $ \ldots, \deg(g_q)\} + 2\nu,$ 
and consider the SOS optimization problem
\begin{align}
B_{d,\nu}^* := \inf_{\lambda,\, S_{j,k}} \;&
b(\lambda)
\nonumber \\[-1ex]
\text{s.t.}\; 
&\sigma(x)^{\nu} P(x,\lambda) = \sum_{k=1}^t E_{\mathcal{C}_k}^\tr \bigg( S_{0,k}(x) + \sum_{j=1}^m g_j(x)S_{j,k}(x) \bigg) E_{\mathcal{C}_k}, 
\nonumber \\ \label{e:sos-reformulation-general}
& S_{j,k} \in \Sigma_{2d_j}^{\abs{\mathcal{C}_k}} \quad \forall j = 0, \ldots, q, \; \forall k = 1, \ldots, t,
\end{align}
where $\Sigma_{2\omega}^m$ denotes the cone of $n$-variate $m \times m$ SOS matrices of degree $2\omega$, $d_0 := d$, $d_j := d - \lceil\frac12 \deg(g_j) \rceil$ for each $j = 1, \ldots, q$, and either $\sigma(x) = \|x\|^2$ or $\sigma(x) = 1+ \|x\|^2$ depending on whether $P$ is homogeneous in $x$ or not. For each choice of $\nu$ and $d$, problem~\cref{e:sos-reformulation-general} can be recast as an SDP~\cite{gatermann2004symmetry,kojima2003sums,parrilo2013semidefinite,scherer2006matrix} and solved using a wide range of algorithms. The optimal $\lambda$ is clearly feasible for~\cref{e:infinite-sdp-intro}, so $B_{d,\nu}^* \geq B^*$. 

The nontrivial and far-reaching implication of the decomposition theorems presented in \cref{ss:results-semialgebraic,ss:results-global}  is that the SOS problem~\cref{e:sos-reformulation-general} is asymptotically exact as $d$ or $\nu$ are increased, provided that the original problem~\cref{e:infinite-sdp-intro} satisfies suitable technical conditions and is strictly feasible.
For instance, the sparsity-exploiting version of Putinar's Positivstellensatz in  \cref{th:sparse-putinar} leads to the following result. 
\begin{theorem}\label{th:sos-program-convergence_putinar}
	Let $\mathcal{K}$ be a compact basic semialgebraic set defined as in~\cref{e:semialgebgraic-set} that satisfies the Archimedean condition~\cref{e:archimedean-condition}, and let $B^*$ and $B_{d,\nu}^*$ be as in \cref{e:infinite-sdp-intro,e:sos-reformulation-general}. If there exists $\lambda_0 \in \mathbb{R}^\ell$ such that $P(x; \lambda_0)$ is strictly positive definite on $\mathcal{K}$, then $B_{d,0}^* \to B^*$ from above as $d \to \infty$. 
\end{theorem}
\begin{proof}
	It suffices to show that, for any $\varepsilon>0$, there exists $d$ such that $B^* \leq B_{d,0}^* \leq B^* + 2\varepsilon$.
	If $\lambda_0$ is optimal for~\cref{e:infinite-sdp-intro}, \cref{th:sparse-putinar} guarantees that $\lambda_0$ is feasible for~\cref{e:sos-reformulation-general} for $\nu=0$ (observe that $[\sigma(x)]^0 \equiv 1$) and some sufficiently large $d$. Since $b(\lambda_0) = B^* \leq B_{d,0}^* \leq b(\lambda_0)$, we obtain $B_{d,0}^*=B^*$. In particular, if the minimizer of~\cref{e:infinite-sdp-intro} is strictly feasible, then the convergence $B_{d,0}^* \to B^*$ is finite.
	
	If $\lambda_0$ is not optimal, fix $\varepsilon>0$ and let $\lambda_\varepsilon$ be an $\varepsilon$-suboptimal feasible point for~\cref{e:infinite-sdp-intro} such that 
	$b(\lambda_\varepsilon) \leq B^* + \varepsilon < b(\lambda_0)$.
	Fix $\lambda = (1-\gamma) \lambda_\varepsilon + \gamma \lambda_0$ for some $\gamma \in (0,1)$ to be determined. Since $P(x,\lambda_0)$ is strictly positive definite on $\mathcal{K}$ and $P(x,\lambda_\varepsilon)$ is PSD on the same set, the matrix
	$P(x,\lambda) = (1-\gamma)  P(x,\lambda_\varepsilon) + \gamma  P(x,\lambda_0)$
	is strictly positive definite on $\mathcal{K}$ and \cref{th:sparse-putinar} guarantees that $\lambda$ is feasible for~\cref{e:sos-reformulation-general} when $d$ is sufficiently large. Given such $d$, we can use the inequality $B^*\leq B_{d,0}^*$ and the convexity of the cost function $b$ to estimate
	\begin{multline*}
	B^* \leq B_{d,0}^*
	\leq b(\lambda)
	= b\left( (1-\gamma)   \lambda_\varepsilon + \gamma  \lambda_0 \right)
	\leq (1-\gamma)  b(\lambda_\varepsilon) + \gamma  b(\lambda_0)\\
	\leq (1-\gamma) B^* + (1-\gamma) \varepsilon + \gamma  b(\lambda_0)
	= B^* + \varepsilon + \gamma   \big[ b(\lambda_0) - B^* - \varepsilon \big].
	\end{multline*}
	The term in square brackets is strictly positive by construction, so we can fix	$\gamma = \varepsilon/[b(\lambda_0) - B^* - \varepsilon]$
	and conclude that $B^* \leq B_\nu^* \leq B^* + 2\varepsilon$, as required.
\end{proof}

If $\mathcal{K}$ is not compact or does not satisfy the Archimedean condition, similar arguments that use \cref{th:sparse-putinar-vasilescu-homog,th:sparse-putinar-vasilescu} instead of \cref{th:sparse-putinar} (omitted for brevity) give asymptotic convergence results provided that $P$ satisfies additional conditions. For homogeneous problems of even degree, strict feasiblity suffices.

\begin{theorem}\label{th:sos-program-convergence-homog}
	Let $\mathcal{K}$ be a basic semialgebraic set defined as in~\cref{e:semialgebgraic-set}, and let $B^*$ and $B_{d,\nu}^*$ be as in \cref{e:infinite-sdp-intro,e:sos-reformulation-general}. Suppose that $P(x,\lambda)$ and the polynomials $g_1,\ldots,g_q$ defining $\mathcal{K}$ are homogeneous of even degree in $x$ for all $\lambda$. If there exists $\lambda_0 \in \mathbb{R}^\ell$ such that $P(x; \lambda_0)$ is strictly positive definite on $\mathcal{K}\setminus \{0\}$, then $B_{d,\nu}^* \to B^*$ from above  as $\nu \to \infty$ with $d = \nu + \frac12 \max\{\deg(P), \deg(g_1), \ldots, \deg(g_q)\}$ and $\sigma(x) = \|x\|^2$.
\end{theorem}

For inhomogeneous problems, instead, we require additional control on the leading homogeneous part of $P(x,\lambda)$ for all $\lambda$.

\begin{theorem}\label{th:sos-program-convergence-inhomog}
	Let $\mathcal{K}$ be a basic semialgebraic set defined as in~\cref{e:semialgebgraic-set}, and let $B^*$ and $B_{d,\nu}^*$ be as in \cref{e:infinite-sdp-intro,e:sos-reformulation-general}. Suppose that $P(x,\lambda) = \sum_{\abs{\alpha}\leq 2d} P_\alpha(\lambda) x^\alpha$ is an inhomogeneous polynomial matrix of even degree $2d$ such that $\sum_{\abs{\alpha}= 2d} P_\alpha(\lambda) x^\alpha$ is positive semidefinite on $\mathbb{R}^n$ for all $\lambda \in \mathbb{R}^\ell$. 
	If there exists $\lambda_0 \in \mathbb{R}^\ell$ such that $P(x; \lambda_0)$ is strictly positive definite on $\mathcal{K}$ and such that $\sum_{\abs{\alpha}= 2d} P_\alpha(\lambda_0) x^\alpha$ is strictly positive definite on $\mathbb{R}^n \setminus\{0\}$, 
	then $B_{d,\nu}^* \to B^*$ from above  as $\nu \to \infty$ with $d = \nu + \lceil\frac12 \max\{\deg(P), \deg(g_1), \ldots, \deg(g_q)\}\rceil$ and $\sigma(x) = 1+\|x\|^2$.
\end{theorem}

\begin{remark}
    \Cref{th:sos-program-convergence-homog,th:sos-program-convergence-inhomog} apply also when $\mathcal{K} \equiv \mathbb{R}^n$, in which case they can be deduced from~\cref{th:sparse-reznick-homog,th:sparse-reznick}. Thus, when $\mathcal{K} \equiv \mathbb{R}^n$ the SOS multipliers $S_{j,k}(x)$ for $j = 1, \ldots, q$ and $k = 1, \ldots, t$ in~\cref{e:sos-reformulation-general} can be set to zero.
\end{remark}

\section{Relation to correlatively sparse SOS decompositions of polynomials}
\label{section:scalarcase}
The SOS chordal decomposition theorems stated in \cref{s:main-results} can be used to derive new existence results for sparsity-exploiting SOS decompositions of certain families of correlatively sparse polynomials~\cite{waki2006sums,lasserre2006convergent,grimm2007note}.
A polynomial $$p(x,y) = \sum_{\alpha,\beta} c_{\alpha,\beta} \,x^\alpha y^\beta,$$ with independent variables $x=(x_1,\ldots,x_n)$ and $y=(y_1,\ldots,y_m)$ and coefficients $c_{\alpha,\beta} \in \mathbb{R}$, is \textit{correlatively sparse with respect to $y$} if the variables $y_1,\ldots,y_m$ are sparsely coupled, meaning that the $m \times m$ coupling matrix ${\rm CSP}_y(p)$ with entries
\begin{equation} \label{eq:csp}
[{\rm CSP}_y(p)]_{ij} = 
\begin{cases}
    1 &\text{if } i = j \text{ or } \exists \beta: \beta_i \beta_j \neq 0 \text{ and } c_{\alpha,\beta} \neq 0 \\
    0 &\text{otherwise}
\end{cases}
\end{equation}
is sparse. For example, the polynomial $p(x,y) = x_1^2x_2 y_1^2 +y_1y_2- x_2 y_2y_3 + y_4^4$ with $n=2$ and $m=4$ is correlatively sparse with respect to $y$ and
\def\arraystretch{0.8}
\begin{equation*}
    {\rm CSP}_y(x_1^2x_2 y_1^2 + y_1y_2 - x_2 y_2y_3 + y_4^4) = \begin{bmatrix}\\[-8pt]
    1 & 1 & 0 &0\\ 1 & 1 & 1 &0\\0 & 1 & 1 &0\\  0 & 0 & 0 & 1\\[-0.25pt]
    \end{bmatrix}.
\end{equation*}
The sparsity graph of the coupling matrix ${\rm CSP}_y(p)$ is known as the {correlative sparsity graph} of $p$, and we say that $p(x,y)$ has chordal correlative sparsity with respect to $y$ if its correlative sparsity graph is chordal.

To exploit correlative sparsity when attempting to verify the nonnegativity of $p(x,y)$, one looks for an SOS decomposition in the form~\cite{waki2006sums,lasserre2006convergent}
\begin{equation}\label{e:corr-sparse-decomposition}
    p(x,y) = \sum_{k=1}^t \sigma_k\!\left(x,y_{\mathcal{C}_k}\right),
\end{equation}
where $\mathcal{C}_1,\ldots,\mathcal{C}_t$ are the maximal cliques of the correlative sparsity graph and each $\sigma_k$ is an SOS polynomial that depends on $x$ and on the subset $y_{\mathcal{C}_k} = E_{\mathcal{C}_k} y$ of $y$ indexed by $\mathcal{C}_k$. For instance, with $m=3$ and two cliques $\mathcal{C}_1 = \{1,2\}$ and $\mathcal{C}_2 = \{2,3\}$ we have $y_{\mathcal{C}_1} = (y_1, y_2)$ and  $y_{\mathcal{C}_2} = (y_2, y_3)$. 


In general, the existence of the sparse SOS representation~\cref{e:corr-sparse-decomposition} is only sufficient to conclude that $p(x,y)$ is nonnegative: Example 3.8 in~\cite{Nie2008} gives a nonnegative (in fact, SOS) correlatively sparse polynomial that cannot be decomposed as in~\cref{e:corr-sparse-decomposition}. Nevertheless, our SOS chordal decomposition theorems from \cref{s:main-results} imply that sparsity-exploiting SOS decompositions do exist for polynomials $p(x,y)$ that are quadratic and correlatively sparse with respect to $y$. This is because any polynomial $p(x,y)$ that is correlatively sparse, quadratic, and (without loss of generality) homogeneous with respect to $y$ can be expressed as $p(x,y)=y^\tr P(x) y$ for some polynomial matrix $P(x)$ whose sparsity graph coincides with the correlative sparsity graph of $p(x,y)$. 
Using this observation, we can ``scalarize'' \cref{th:weighted-chordal-decomposition,,th:sparse-reznick-homog,,th:sparse-putinar,,th:sparse-putinar-vasilescu-homog} to obtain the following statements.

\begin{corollary} \label{corollary:globalcase}
    Let $p(x,y)=\sum_{\alpha, |\beta|\leq 2} c_{\alpha,\beta}x^\alpha y^\beta$ be nonnegative on $\mathbb{R}^n \times \mathbb{R}^m$, 
    quadratic and correlatively sparse in $y$, and such that $\sum_{\alpha, |\beta| = 2} c_{\alpha,\beta}x^\alpha y^\beta$ is nonnegative globally.
    If the correlative sparsity graph is chordal with maximal cliques $\mathcal{C}_1,\ldots,\mathcal{C}_t$, there exist an SOS polynomial $\sigma_0(x)$ and 
    SOS polynomials $\sigma_k(x,y_{\mathcal{C}_k})$ quadratic in the second argument
    such that
    $\sigma_0(x) p(x,y) = \sum_{k=1}^t \sigma_k\!\left(x, y_{\mathcal{C}_k}\right).$
\end{corollary}
\begin{proof}
    Assume first that $p$ is homogeneous in $y$ and write $p(x,y)=y^\tr P(x) y$, where $P(x)$ is positive semidefinite globally and has the same sparsity pattern as the correlative sparsity matrix ${\rm CSP}_y(p)$. \Cref{th:weighted-chordal-decomposition} guarantees that
    \begin{align*} 
    \sigma_0(x) p(x,y)  = y^\tr \left[ \sigma_0(x) P(x) \right] y
                   =  y^\tr \bigg(\sum_{k=1}^{t} E_{\mathcal{C}_k}^\tr S_k(x) E_{\mathcal{C}_k}\bigg) y
                   = \sum_{k=1}^t y_{\mathcal{C}_k}^\tr S_k(x) y_{\mathcal{C}_k}
    \end{align*}
    for some SOS polynomial $\sigma_0(x)$ and SOS polynomial matrices $S_k(x)$.
    Setting $\sigma_k(x,y_{\mathcal{C}_k}) :=  y_{\mathcal{C}_k}^\tr S_k(x) y_{\mathcal{C}_k}$
    gives the desired decomposition. When $p$ is not homogeneous, the result follows from a relatively straightforward homogenization argument described in \cref{app:homogenization}.
\end{proof}

\begin{corollary}\label{corollary:global_even}
    Let $p(x,y)=\sum_{|\alpha| = 2d, |\beta| \leq 2} c_{\alpha,\beta}x^\alpha y^\beta$ be homogeneous with degree $2d$ in $x$, and both quadratic and correlatively sparse in $y$. Suppose that
    \begin{enumerate}[1),topsep=2pt,noitemsep]
    	\item The correlative sparsity graph is chordal with maximal cliques $\mathcal{C}_1,\ldots,\mathcal{C}_t$;
    	\item $\sum_{|\alpha|=2d, |\beta| = 2} c_{\alpha,\beta}x^\alpha y^\beta > 0$ for all $(x,y)\neq (0,0)$;
    	\item If $p$ is not homogeneous in $y$, then $p(x,y)>0$ for all $x \neq 0$ and $y\in\mathbb{R}^m$.
    \end{enumerate}
    Then, there exist an integer $\nu\geq 0$ and SOS polynomials $\sigma_k(x,y_{\mathcal{C}_k})$ quadratic in the second argument such that
    %
    $\|x\|^{2\nu} p(x,y) = \sum_{k=1}^t \sigma_k\!\left(x,y_{\mathcal{C}_k}\right).$
    %
\end{corollary}
\begin{proof}
    If $p$ is homogeneous in $y$, write $p(x,y)=y^\tr P(x) y$,  observe that $P$ is strictly positive definite for all $x \in \mathbb{R}^n \setminus \{0\}$, apply \cref{th:sparse-reznick-homog} to $P$, and proceed as in the proof of \cref{corollary:globalcase}. If $p$ is not homogeneous, use a homogenization argument similar to that in \cref{app:homogenization}.
\end{proof}

\begin{corollary}\label{corollary:compact}
    Let $p(x,y)=\sum_{|\alpha| \leq d, |\beta| \leq 2} c_{\alpha,\beta}x^\alpha y^\beta$ be quadratic and correlatively sparse in $y$. Further, let $\mathcal{K}$ be a semialgebraic set defined as in~\cref{e:semialgebgraic-set}. Suppose that
    \begin{enumerate}[1),topsep=2pt,noitemsep]
    	\item The correlative sparsity graph is chordal with maximal cliques $\mathcal{C}_1,\ldots,\mathcal{C}_t$,
        \item $\sum_{|\alpha|\leq d, |\beta| = 2} c_{\alpha,\beta}x^\alpha y^\beta > 0$ for all $x\in \mathcal{K}$ and $y \in \mathbb{R}^m\setminus \{0\}$,
        \item If $p$ is not homogeneous in $y$, then $p(x,y)>0$ for all $x\in \mathcal{K}$ and $y \in \mathbb{R}^m$. 
    \end{enumerate}
    Then: 
    \begin{enumerate}[topsep=2pt,noitemsep,widest=99,leftmargin =*]
    	\item[i)] If $\mathcal{K}$ is compact and satisfies the Archimedean condition~\cref{e:archimedean-condition}, there exist SOS polynomials $\sigma_{j,k}(x,y_{\mathcal{C}_k})$, quadratic in the second argument, such that
    \begin{equation*}
    p(x,y) = \sum_{k=1}^t \bigg[ \sigma_{0,k}\!\left(x, y_{\mathcal{C}_k}\right) + \sum_{j=1}^q g_{j}(x)\sigma_{j,k}\!\left(x, y_{\mathcal{C}_k}\right) \bigg].
    \end{equation*}
    	\item[ii)] If $p$ and the polynomials $g_1,\ldots,g_q$ defining $\mathcal{K}$ are homogeneous of even degree in $x$, the set $\mathcal{K} \setminus \{0\}$ is nonempty, and conditions 2) and 3) above hold for $x\in \mathcal{K} \setminus \{0\}$,
    	there exist an integer $\nu \geq 0$ and SOS polynomials  $\sigma_{j,k}(x,y_{\mathcal{C}_k})$, quadratic in the second argument,
        such that
        \begin{equation*}
        \|x\|^{2\nu} p(x,y) = \sum_{k=1}^t \bigg[ \sigma_{0,k}\!\left(x, y_{\mathcal{C}_k}\right) + \sum_{j=1}^q g_{j}(x)\sigma_{j,k}\!\left(x, y_{\mathcal{C}_k}\right) \bigg].
        \end{equation*}
        \end{enumerate}
\end{corollary}
\begin{proof}
    If $p$ is homogeneous in $y$, write $p(x,y)=y^\tr P(x)y$ for a polynomial matrix $P(x)$ with chordal sparsity graph. The strict positivity of $p$ for all nonzero $y$ implies that $P$ is strictly positive definite on $\mathcal{K}$. Therefore, we can apply \cref{th:sparse-putinar} for statement i) and \cref{th:sparse-putinar-vasilescu-homog} for statement ii), and proceed as in the proof of \cref{corollary:globalcase} to conclude the proof.
    If $p$ is not homogeneous in $y$, one can use a homogenization argument similar to that in \cref{app:homogenization}.
\end{proof}

\Cref{corollary:compact} specializes, but appears not to be a particular case of, an SOS representation result for correlative sparse polynomials proved by Lasserre~\cite[Theorem~3.1]{lasserre2006convergent}. Similarly, \cref{corollary:globalcase,corollary:global_even} specialize recent results in~\cite{mai2020sparse}. In particular, although our statements apply only to polynomials $p(x,y)$ that are quadratic and correlatively sparse with respect to $y$ rather than to general ones, they provide explicit and tight degree bounds on the quadratic variables that cannot be deduced directly from the (more general) results in the references.
For example, let $\mathcal{K}$ be as in~\cref{e:semialgebgraic-set}, suppose that the Archimedean condition~\cref{e:archimedean-condition} holds, and suppose that $p(x,y)$ is quadratic, homogeneous, and correlatively sparse in $y$ with a chordal correlative sparsity graph. If $p$ is strictly positive for all $x \in \mathcal{K}$ and all $y \in \mathbb{R}^m\setminus\{0\}$, then in particular it is so on the basic semialgebraic set $\mathcal{K}':=\{(x,y) \in \mathcal{K} \times \mathbb{R}^m: \pm(1-y_1^2) \geq 0, \ldots, \pm(1-y_m^2) \geq 0\}$.
%
%
This set also satisfies the Archimedean condition, so one can use Theorem 3.1 in~\cite{lasserre2006convergent} to represent $p$ as
%
\begin{equation}
p(x,y) = \sum_{k=1}^t \bigg[ \sigma_{0k}\!\left(x, y_{\mathcal{C}_k}\right) 
+ \sum_{j=1}^q g_{j}(x)\sigma_{jk}\!\left(x, y_{\mathcal{C}_k}\right)
+ \sum_{\ell \in \mathcal{C}_k} \rho_{k\ell}(x,y_{\mathcal{C}_k}) (1 - y_\ell^2) \bigg]
\label{e:sparse-correlative-expansion-quadratic-example}
\end{equation}
for some SOS polynomials $\sigma_{jk}$ and some polynomials $\rho_{k\ell}$, not necessarily SOS. 
\Cref{corollary:compact} enables one to go further and conclude that one may take $\rho_{k\ell}\equiv 0$ and
$\sigma_{jk}\!\left(x, y_{\mathcal{C}_k}\right) =  y_{\mathcal{C}_k}^\tr S_{jk}(x) y_{\mathcal{C}_k}$ for some SOS matrices $S_{jk}$. These restrictions could probably be deduced starting from~\cref{e:sparse-correlative-expansion-quadratic-example}, but our approach based on the SOS chordal decomposition of sparse polynomial matrices makes them almost immediate.

\section{Numerical experiments}
\label{s:examples}

We now give numerical examples demonstrating the practical performance of the sparsity-exploiting SOS reformulations of the optimization problem~\cref{e:infinite-sdp-intro} introduced in \cref{ss:applications-to-PMI-optimiz}. All examples were implemented on a PC with a 2.2 GHz Intel Core i5 CPU and 12GB of RAM, using the SDP solver {MOSEK}~\cite{andersen2000mosek} and a customized version of the MATLAB optimization toolbox {YALMIP}~\cite{lofberg2004yalmip,lofberg2009pre}. The toolbox and all scripts used to generate the results presented below are available from \href{https://github.com/aeroimperial-optimization/aeroimperial-yalmip}{https://github.com/aeroimperial-optimization/aeroimperial-yalmip} and 
\href{https://github.com/aeroimperial-optimization/sos-chordal-decomposition-pmi}{https://github.com/aeroimperial-optimization/sos-chordal-decomposition-pmi}.

\subsection{Approximation of global polynomial matrix inequalities}
\label{example:quartic_polynomial_matrix}
Our first numerical experiment illustrates the computational advantage of our sparsity-exploiting SOS reformulation for a problem with a global polynomial matrix inequality.   
Fix an integer $\omega \geq 1$ and consider the $3\omega \times 3\omega$ tridiagonal polynomial matrix $P_\omega=P_\omega(x,\lambda)$, parameterized by $\lambda \in \mathbb{R}^2$, given by
\begin{equation*}
    P_\omega
    =
    \begin{bmatrix}
    \lambda_2 x_1^4+x_2^4 & \lambda_1 x_1^2 x_2^2\\
    \lambda_1 x_1^2 x_2^2 & \lambda_2 x_2^4+x_3^4 & \lambda_2 x_2^2 x_3^2\\
     & \lambda_2 x_2^2 x_3^2 & \lambda_2 x_3^4 + x_1^4 & \lambda_1 x_1^2 x_3^2\\
     &  & \lambda_1 x_1^2 x_3^2 & \lambda_2 x_1^4+x_2^4 & \lambda_2 x_1^2 x_2^2\\
     &  &  & \lambda_2 x_1^2 x_2^2 & \lambda_2 x_2^4+x_3^4 &  \ddots\\
     &  &  &  & \ddots &  \ddots & \lambda_i x_2^2 x_3^2\\
     &  &  &  &  & \lambda_i x_2^2 x_3^2 & \lambda_2 x_3^4 + x_1^4
    \end{bmatrix}\!,
\end{equation*}
where $i=1$ if $3\omega$ is even and $i = 2$ otherwise. Its sparsity graph is chordal with vertices $\mathcal{V}=\{1,\,\ldots,\,3\omega\}$, edges $\mathcal{E} = \{(1,2),\,(2,3),\,\ldots,\,(3\omega-1,3\omega)\}$, and maximal cliques $\mathcal{C}_1 =\{1, 2\}$, $\mathcal{C}_2 =\{2, 3\}$, $\ldots$ , $\mathcal{C}_{3\omega-1} =\{3\omega-1, 3\omega\}$. Observe that $P_\omega(x)$ is homogeneous for all $\lambda$, and it is positive definite on $\mathbb{R}^3\setminus\{0\}$ when $\lambda=(0,0)$.

First, we illustrate how \cref{th:sparse-reznick-homog} enables one to approximate the set of vectors $\lambda$ for which $P_\omega$ is PSD globally,
\begin{equation*} 
    \mathcal{F}_\omega=\{\lambda \in \mathbb{R}^2 :\; P_\omega(x,\lambda) \succeq 0 \quad \forall x \in \mathbb{R}^3\}.
\end{equation*}
%
%
Define two hierarchies of subsets of $\mathcal{F}_\omega$, indexed by a nonnegative integer $\nu$, as
\begin{subequations}
\begin{gather} 
    \label{eq:ex_feasible_region_sos}
    \mathcal{D}_{\omega,\nu} := \left\{ \lambda \in \mathbb{R}^2:\; \|x\|^{2\nu} P_\omega(x,\lambda) \text{ is SOS}\right\},
\\
    \label{eq:ex_feasible_region_sos_decomposition}
    \mathcal{S}_{\omega,\nu} := \bigg\{\lambda \in \mathbb{R}^2:\; \|x\|^{2\nu} P_\omega(x,\lambda) =  \sum_{k=1}^{3\omega - 1} E_{\mathcal{C}_k}^\tr S_k(x) E_{\mathcal{C}_k},
    S_k(x)\text{ is SOS}\bigg\}.
\end{gather}
\end{subequations}
The sets $\mathcal{D}_{\omega,\nu}$ are defined using the standard (dense) SOS constraint~\cref{e:matrix-sos}, while the sets $\mathcal{S}_{\omega,\nu}$ use the sparsity-exploiting nonnegativity certificate in~\cref{th:sparse-reznick-homog}.
For each $\nu$ we have ${\mathcal{S}}_{\omega,\nu} \subseteq {\mathcal{D}}_{\omega,\nu} \subseteq \mathcal{F}_\omega$, and the inclusions are generally strict. This is confirmed by the (approximations to) the first few sets $\mathcal{D}_{2,\nu}$ and $\mathcal{S}_{2,\nu}$ shown in \cref{fig:ex_feasible_region}, which were obtained by maximizing the linear cost function $\lambda_1\,\cos\theta + \lambda_2\,\sin\theta$ for 1000 equispaced values of $\theta$ in the interval $[0,\pi/2]$ and exploiting the $\lambda_1 \mapsto - \lambda_1$ symmetry of $\mathcal{D}_{2,\nu}$ and $\mathcal{S}_{2,\nu}$. (Computations for $\mathcal{S}_{2,1}$ were ill-conditioned, so the results are not reported.) 
On the other hand, for any choice of $\omega$, \cref{th:sparse-reznick-homog} guarantees that any $\lambda$ for which $P_\omega$ is positive definite belongs to $\mathcal{S}_{\omega,\nu}$ for sufficiently large $\nu$. Thus, the sets $\mathcal{S}_{\omega,\nu}$ can approximate $\mathcal{F}_\omega$ arbitrarily accurately 
in the sense that 
any compact subset of 
the interior of $\mathcal{F}_\omega$
is included in $\mathcal{S}_{\omega,\nu}$ for some sufficiently large integer $\nu$. The same is true for the sets $\mathcal{D}_{\omega,\nu}$ since $\mathcal{S}_{\omega,\nu} \subseteq \mathcal{D}_{\omega,\nu}$. Once again, this is confirmed by our numerical results for $\omega=2$ in~\cref{fig:ex_feasible_region}, which suggest that
$\mathcal{S}_{2,3} = \mathcal{D}_{2,2} = \mathcal{F}_2$. 

\begin{figure}
    \centering
    \subfigure[]{
    \includegraphics[scale = 0.45]{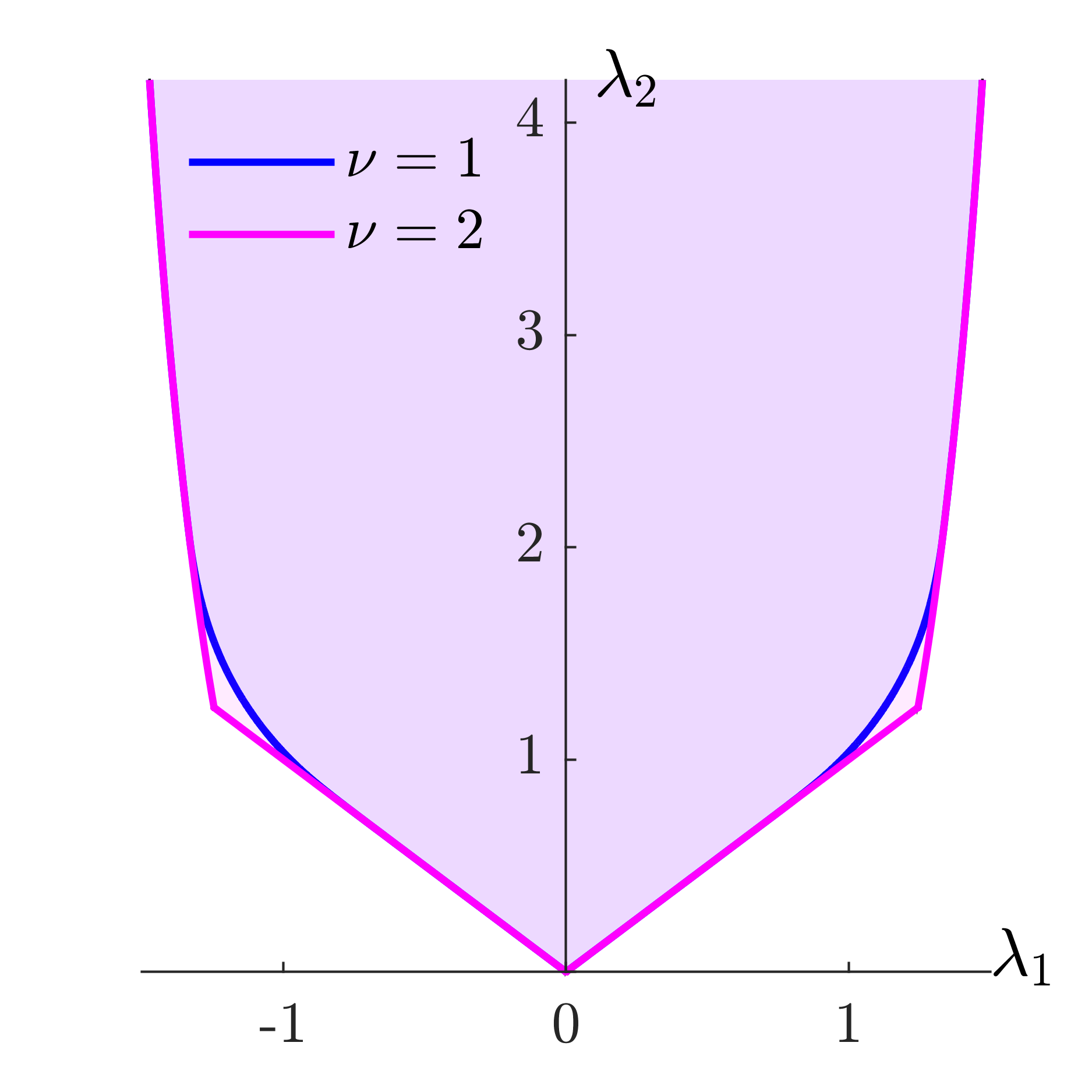}
    }
    \hspace{10mm}
    \subfigure[]{
    \includegraphics[scale = 0.45]{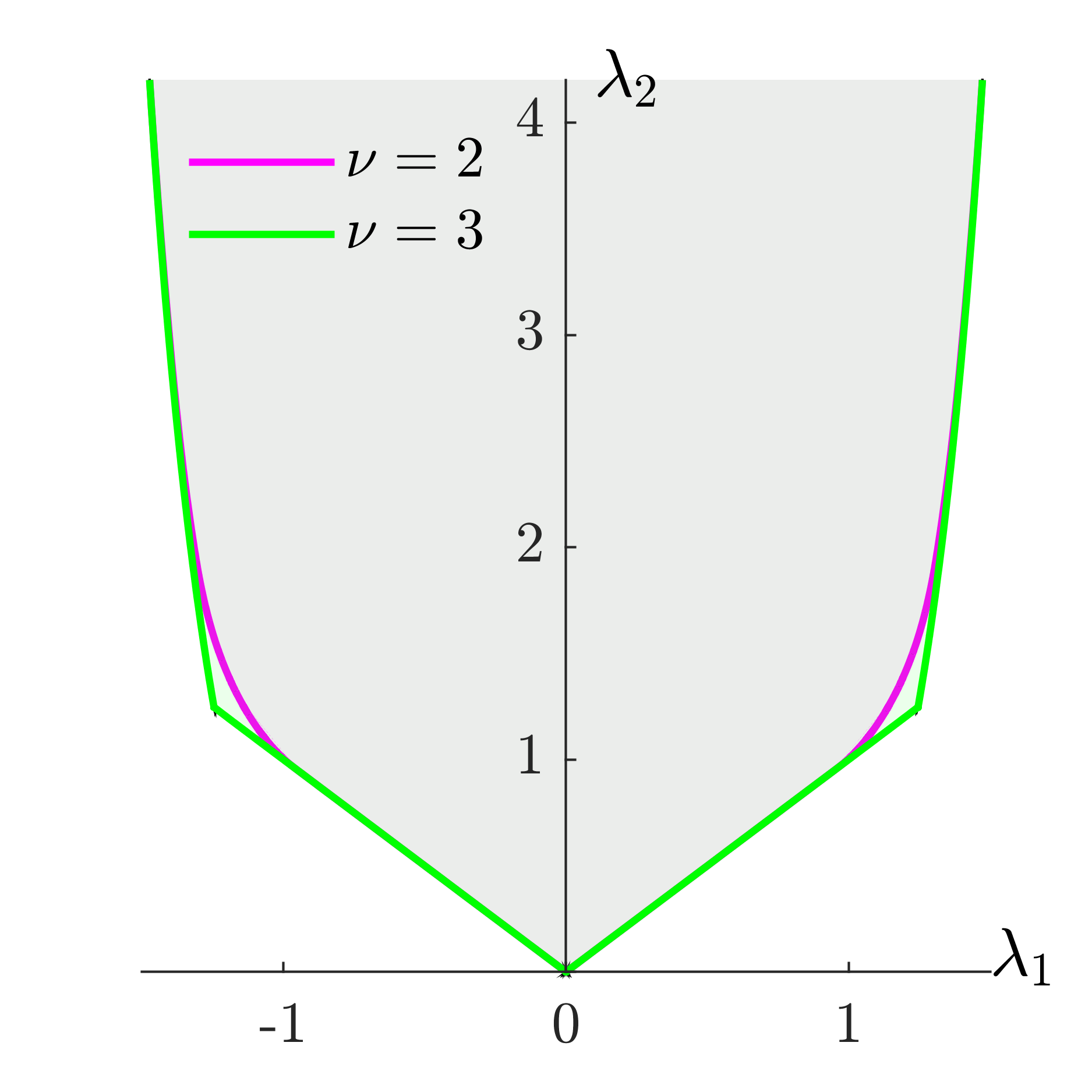}
    }
    \caption{Inner approximations of the set $\mathcal{F}_2$ obtained with SOS optimization. (a) Sets $\mathcal{D}_{2,\nu}$ obtained using the standard SOS constraint~\cref{eq:ex_feasible_region_sos}; (b) Sets $\mathcal{S}_{2,\nu}$ obtained using the sparse SOS constraint~\cref{eq:ex_feasible_region_sos_decomposition}. \Cref{th:sos-program-convergence-homog} guarantees the sequences of sets $\{\mathcal{D}_{2,\nu}\}_{\nu \in \mathbb{N}}$ and $\{\mathcal{S}_{2,\nu}\}_{\nu\in\mathbb{N}}$ are asymptotically exact as $\nu \rightarrow \infty$. The numerical results suggest $\mathcal{S}_{2,3} = \mathcal{D}_{2,2} = \mathcal{F}_2$.}
    \label{fig:ex_feasible_region}
\end{figure}

Next, to illustrate the computational advantages of our sparsity-exploiting SOS methods compared to the standard ones, we use both approaches to bound
\begin{equation} \label{ex:example3_5}
    B^* := \inf_{\lambda \in \mathcal{F}_\omega} \lambda_2 - 10\lambda_1
\end{equation}
from above by replacing $\mathcal{F}_\omega$ with its inner approximations $\mathcal{D}_{\omega,\nu}$ and $\mathcal{S}_{\omega,\nu}$ in~\cref{eq:ex_feasible_region_sos,eq:ex_feasible_region_sos_decomposition}. Optimizing over $\mathcal{D}_{\omega,\nu}$ requires one SOS constraint on a $3\omega \times 3\omega$ polynomial matrix of degree $d=2\nu+4$, while optimizing over $\mathcal{S}_{\omega,\nu}$ requires $3\omega-1$ SOS constraints on $2\times 2$ polynomial matrices of the same degree. \Cref{th:sos-program-convergence-homog} and the inclusion $\mathcal{S}_{\omega,\nu} \subseteq \mathcal{D}_{\omega,\nu}$ guarantee that the upper bounds $B_{d,\nu}$ on $B^*$ obtained with either SOS formulation converge to the latter as $\nu \to \infty$. (Here, as in \cref{ss:applications-to-PMI-optimiz}, $B_{d,\nu}$ denotes the upper bound on $B^*$ obtained from SOS reformulations of~\cref{ex:example3_5} with SOS matrices of degree $d$ and exponent $\nu$.)

\begin{table*}[t]
  \centering
  \renewcommand\arraystretch{1.05}
  \caption{
  Upper bounds $B_{d,\nu}$ on the optimal value $B^*$ of~\cref{ex:example3_5} for increasing values of matrix sizes $\omega$, obtained using the standard SOS constraint~\cref{e:matrix-sos} and the sparsity-exploiting SOS condition~\cref{e:sos-reformulation-general} with SOS matrices of degree $d=4+2\nu$. Also tabulated is the CPU time ($t$, in seconds) required by MOSEK to solve the SDP corresponding to each SOS problem. Entries marked by {\sc oom} indicate ``out of memory'' runtime errors in MOSEK.}
   {\small
	\begin{tabular}{c c cc@{}p{0.3cm}@{}cc@{}p{0.3cm}@{}cc c cc@{}p{0.3cm}@{}cc@{}p{0.3cm}@{}cc}
    \toprule
    && \multicolumn{8}{c}{Standard SOS~\cref{e:matrix-sos}}   && \multicolumn{8}{c}{Sparse SOS~\cref{e:sos-reformulation-general}} \\
    \cline{3-10}\cline{12-19}
    &&
    \multicolumn{2}{c}{$\nu=1$} && \multicolumn{2}{c}{$\nu=2$} && \multicolumn{2}{c}{$\nu=3$} &&
    \multicolumn{2}{c}{$\nu=2$} && \multicolumn{2}{c}{$\nu=3$} && \multicolumn{2}{c}{$\nu=4$} \\
    \cline{3-4}\cline{6-7}\cline{9-10}\cline{12-13}\cline{15-16}\cline{18-19}
    $\omega$ && $t$ & $B_{d,\nu}$ && $t$ & $B_{d,\nu}$ && $t$ & $B_{d,\nu}$ && $t$ & $B_{d,\nu}$ && $t$ & $B_{d,\nu}$ && $t$ & $B_{d,\nu}$ \\[0.25ex]
    5 &&  12 & -8.68 && 25 & -9.36 && 69 & -9.36 &&  0.58 & -8.97 && 0.72 &  -9.36 && 1.29 & -9.36\\
    10 && 407 & -8.33 &&  886 &-9.09 && 2910 & -9.09 &&  1.65 & -8.72 && 0.82 &-9.09 && 2.08 & -9.09\\ 
    15 &&  2090 & -8.26 && {\sc oom}& {\sc oom}&& {\sc oom}& {\sc oom}&&   2.76 & -8.68 && 1.13 &-9.04 && 2.79 & -9.04\\  
    20 &&{\sc oom} & {\sc oom} && {\sc oom}& {\sc oom} && {\sc oom}& {\sc oom}&& 3.24 & -8.66 && 1.54 & -9.02 && 4.70 & -9.02\\
    25 &&{\sc oom} & {\sc oom} && {\sc oom}& {\sc oom} && {\sc oom}& {\sc oom}&& 2.85 & -8.66 && 1.94 & -9.02 && 4.59 & -9.02\\
    30 &&{\sc oom} & {\sc oom} && {\sc oom}& {\sc oom} && {\sc oom}& {\sc oom}&& 2.38 & -8.65 && 2.40 & -9.01 && 5.50 & -9.01\\
    35 &&{\sc oom} & {\sc oom} && {\sc oom}& {\sc oom} && {\sc oom}& {\sc oom}&& 2.66 & -8.65 && 3.25 & -9.01 && 6.17 & -9.01\\
    40 &&{\sc oom} & {\sc oom} && {\sc oom}& {\sc oom} && {\sc oom}& {\sc oom}&& 3.07 & -8.65 && 3.14 & -9.01 && 8.48 & -9.01\\
    \bottomrule
    \end{tabular}
    }
\label{tab:Unconstrained_comparision}
\end{table*}

\Cref{tab:Unconstrained_comparision} lists upper bounds $B_{d,\nu}$ computed with MOSEK using both SOS formulations, degree $d=4+2\nu$, and different values of $\omega$ and $\nu$. The CPU time is also listed. Bounds for our sparse SOS formulation with $\nu=1$ are not reported because MOSEK encountered severe numerical problems irrespective of the matrix size $\omega$.
It is evident that our sparsity-exploiting SOS method scales significantly better than the standard approach as $\omega$ and $\nu$ increase. For $\omega=10$, for example, the bound obtained with our sparsity-exploiting approach and $\nu=3$ agrees to two decimal places with the bounds calculated using traditional methods with $\nu=2$ and $3$, but the computation is three orders of magnitude faster.
More generally, our sparsity-exploiting computations took less than 10 seconds for all tested values of $\omega$ and $\nu$,\footnote{Computations are sometimes faster for $\nu=3$ than for $\nu=2$ because 
MOSEK converged in fewer iterations. This suggests that numerical conditioning improves with $\nu$ for this example.}
while traditional ones required more RAM than available for all but the smallest values. We expect similarly large efficiency gains for any optimization problem with sparse polynomial matrix inequalities if the size of the largest maximal clique of the sparsity graph is much smaller than the matrix size.

\subsection{Approximation of polynomial matrix inequalities on compact sets}

As our second example, we consider the problem of constructing inner approximations for compact sets where a polynomial matrix is positive semidefinite. This problem arises, for instance, when approximating the robust stability region of linear dynamical systems~\cite{scherer2006lmi}, and was studied in~\cite{henrion2011inner} using standard SOS methods. Here, we show that our sparse-matrix version of Putinar's  Positivstellensatz in~\cref{th:sparse-putinar} allows for significant reductions in computational complexity without sacrificing the rigorous convergence guarantees established in~\cite{henrion2011inner}.


Let $\mathcal{K}\subset \mathbb{R}^n$ be a compact semialgebraic set defined as in~\cref{e:semialgebgraic-set} that satisfies the Archimedean condition, and let $P(x)$ be an $m\times m$ symmetric polynomial matrix. We seek to construct a sequence $\{\mathcal{S}_{2d}\}_{d \in \mathbb{N}}$ of subsets of the (compact) set
$\mathcal{P} = \{x \in \mathcal{K} \mid P(x) \succeq 0\}$, 
such that $\mathcal{S}_{2d}$ converges to $\mathcal{P}$ in volume. Following~\cite{henrion2011inner}, this can be done by letting 
$\mathcal{S}_{2d} = \{x \in \mathcal{K} \mid s_{2d}(x) \geq 0\}$ 
be the superlevel set of the degree-$2d$ polynomial $s_{2d}(x)$ that solves the convex optimization problem
\begin{equation} \label{eq:PMI}
            \begin{aligned}
                    B_{m,d}^* := \max_{s_{2d}(x)} \int_{\mathcal{K}} s_{2d}(x) \,{\rm d}x 
                    \quad\text{s.t.}\quad
                    P(x) - s_{2d}(x)I \succeq 0 \quad \forall  x \in \mathcal{K}.
            \end{aligned}
\end{equation}
This problem is in the form~\cref{e:infinite-sdp-intro}, and the optimization variable $\lambda$ is the vector of $\binom{n+2d}{n}$ coefficients of $s_{2d}$ (with respect to any chosen basis). The polynomial $s_{2d}$ is a pointwise lower bound for the minimum eigenvalue function of $P(x)$ on $\mathcal{K}$. Using this observation, the compactness of $\mathcal{K}$, the continuity of eigenvalues, and the Weierstrass polynomial approximation theorem, one can show that, as $d \to \infty$, $\mathcal{S}_{2d}$ converges to $\mathcal{P}$ in volume, $s_{2d}$ converges pointwise almost everywhere to the minimum eigenvalue function, and $B_{m,d}^*$ tends to the integral of the latter on $\mathcal{K}$.

Theorem 1 in~\cite{henrion2011inner} shows that convergence is maintained if the intractable matrix inequality constraint is replaced with a weighted SOS representation for $P(x)-s_{2d}(x)I$ in the form~\cref{e:matrix-sos}, where the SOS matrices $S_k$ are chosen such that the degree of $S_0 + g_1 S_1 + \cdots + g_q S_q$ does not exceed $2d$. By \cref{th:sparse-putinar}, the same is true for the sparsity-exploiting reformulation~\cref{e:sos-reformulation-general} with $\nu=0$, SOS matrices $S_{0,k}$ of degree $d_0 = d$, and SOS matrices $S_{j,k}$ of degree $d_j = d - \lceil\frac12 \deg(g_j) \rceil$.

To illustrate the computational advantages gained by exploiting sparsity, we consider a relatively simple (but still nontrivial) bivariate problem with $\mathcal{K}=\{x \in \mathbb{R}^2: 1-x_1^2 - x_2^2 \geq 0\}$ being the unit disk and
\begin{equation} \label{eq:large_PMI}
    P(x) = (1 - x_1^2 - x_2^2)I_m + (x_1 + x_1x_2 - x_1^3)A + (2x_1^2x_2-x_1x_2-2x_2^3)B,
\end{equation}
where $A$ and $ B$ are $m \times m$ symmetric matrices with chordal sparsity graphs, zero diagonal elements, and other entries drawn randomly from the uniform distribution on $(0,1)$. The sparsity graphs of $A$ and $B$ were generated randomly whilst ensuring that their maximal cliques contain no more than five vertices~\cite{mason2015chordal}, and the corresponding structure of $P$ for $m = 15$, $20$, $25$, $30$, $35$ and $40$ is shown in \cref{fig:ex_sparsity_patterns}. The exact data matrices used in our calculations are available at {\href{https://github.com/aeroimperial-optimization/sos-chordal-decomposition-pmi}{https://github.com/aeroimperial-optimization/sos-chordal-decomposition-pmi}}.

\begin{figure}
    \centering
    \subfigure[$m=15$]{
    \includegraphics[scale = 0.28]{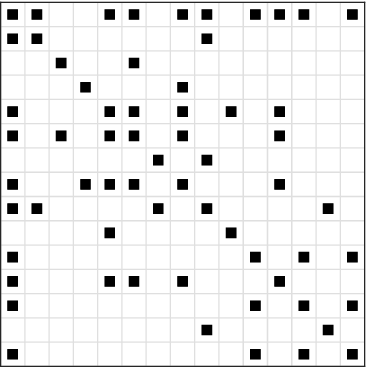}
    }
    \subfigure[$m=20$]{
    \includegraphics[scale = 0.28]{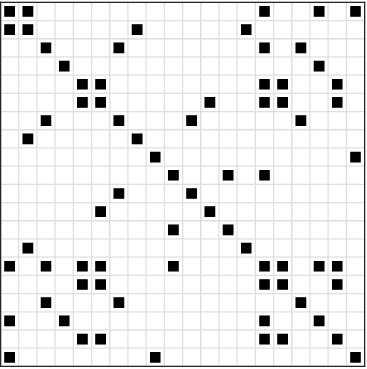}
    }
    \subfigure[$m=25$]{
    \includegraphics[scale = 0.28]{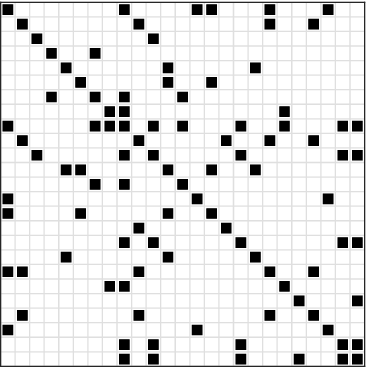}
    } 
    \subfigure[$m=30$]{
    \includegraphics[scale = 0.28]{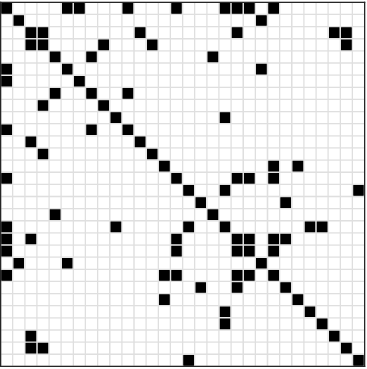}
    }
    \subfigure[$m=35$]{
    \includegraphics[scale = 0.28]{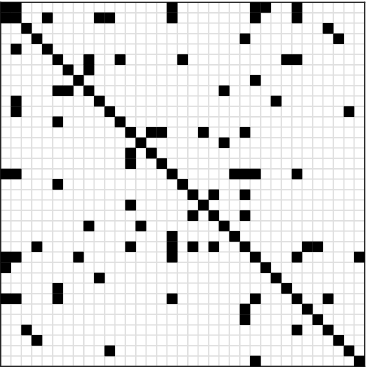}
    }
    \subfigure[$m=40$]{
    \includegraphics[scale = 0.28]{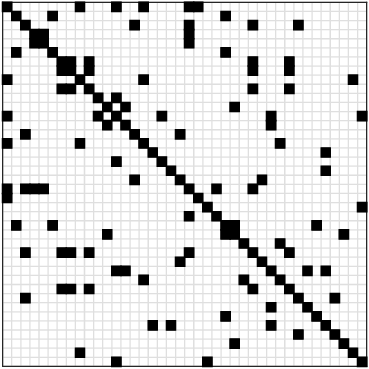}
    }
    \caption{Chordal sparsity patterns for the polynomial matrix $P(x)$ in~\eqref{eq:large_PMI}. 
    }
    \label{fig:ex_sparsity_patterns}
\end{figure}
\begin{figure}
    \includegraphics[width=\textwidth]{./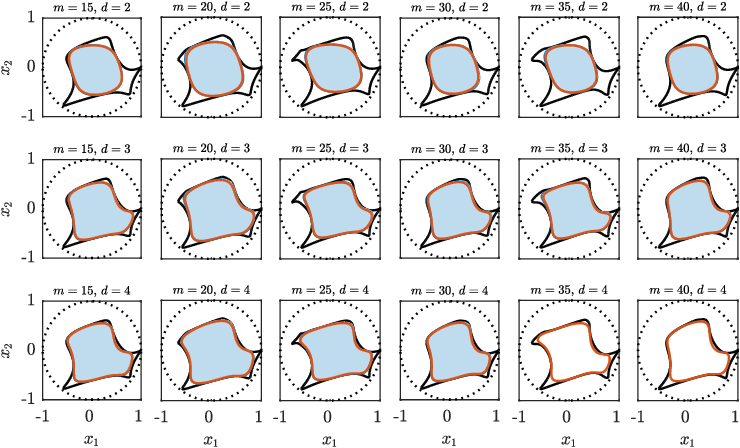}
    \caption{Inner approximations $\mathcal{S}_{2d}$ of the subset $\mathcal{P}$ of the unit disk (black dots) where the sparse $m \times m$ polynomial matrix $P(x)$ in~\cref{eq:large_PMI} is PSD. The boundary of $\mathcal{P}$ is plotted as a solid black line ({\solidrule}).
    The approximating sets computed using the standard SOS constraint~\cref{e:matrix-sos} (blue solid boundary and blue shading; shown if available) and the sparsity-exploiting SOS problem~\cref{e:sos-reformulation-general} with $\nu=0$ (red solid boundary, no shading) 
    and are almost indistinguishable.
   }
    \label{fig:inner_approximations}
\end{figure}

\begin{table*}
  \centering
  \renewcommand\arraystretch{1.05}
  \caption{Lower bounds on the optimal value of~\cref{eq:PMI} with $P(x)$ as in~\cref{eq:large_PMI} and $\mathcal{K}$ the unit disk, obtained using the standard SOS constraint~\cref{e:matrix-sos} and the sparsity-exploiting SOS problem~\cref{e:sos-reformulation-general} for increasing values of $m$ and $d$. Also tabulated is the CPU time ($t$, in seconds) required by MOSEK to solve the SDP corresponding to each SOS problem. Entries marked by {\sc oom} indicate ``out of memory'' runtime errors in MOSEK. The asymptotic value $B_{m,\infty}^*$ was found by integrating the minimum eigenvalue function of $P$ over the unit disk $\mathcal{K}$.}
   {\small
	\begin{tabular}{C{0.2cm}
	        @{}p{0.35cm}@{} C{0.3cm}C{0.9cm}@{}p{0.3cm}@{}C{0.3cm}C{0.9cm}@{}p{0.3cm}@{}C{0.3cm}C{0.9cm}  
			@{}p{0.35cm}@{} C{0.3cm}C{0.9cm}@{}p{0.3cm}@{}C{0.3cm}C{0.9cm}@{}p{0.3cm}@{}C{0.3cm}C{0.9cm}@{}p{0.cm}C{0.8cm}}
    \toprule
    && \multicolumn{8}{c}{Standard SOS~\cref{e:matrix-sos}}   && \multicolumn{8}{c}{Sparse SOS~\cref{e:sos-reformulation-general}} & \\[0.25ex]
    \cline{3-10}\cline{12-19}\\[-2ex]
    &&
    \multicolumn{2}{c}{$d=2$} && \multicolumn{2}{c}{$d=3$} && \multicolumn{2}{c}{$d=4$} &&
    \multicolumn{2}{c}{$d=2$} && \multicolumn{2}{c}{$d=3$} && \multicolumn{2}{c}{$d=4$} && \\
    \cline{3-4}\cline{6-7}\cline{9-10}\cline{12-13}\cline{15-16}\cline{18-19}\\[-2ex]
    $m$ && $t$ & $B^{\rm sos}_{m,d}$ && $t$ & $B^{\rm sos}_{m,d}$ && $t$ & $B^{\rm sos}_{m,d}$ && $t$ & $B^{\rm sos}_{m,d}$ && $t$ & $B^{\rm sos}_{m,d}$ && $t$ & $B^{\rm sos}_{m,d}$ && $B^*_{m,\infty}$\\
     15 && 3.7 & -2.07 &&  24.8 &-1.50 && 95.1 & -1.36 &&  0.95 & -2.10 && 0.97 &-1.52 && 1.94 & -1.37 && -1.15\\ 
    20 &&  13.3 & -1.51 &&  96.5& -1.03&& 375& -0.92&&   0.69 & -1.58 && 1.06 & -1.07 && 2.12 & -0.95 && -0.75\\  
    25 && 38.1 & -2.47 && 326& -1.85 && 1308& -1.64&& 0.95 & -2.50 && 1.28 & -1.87 && 3.04 & -1.66 && -1.41\\
    30 && 136 & -2.13 && 963& -1.54 && 4031& -1.41&& 0.75 & -2.21 && 1.35 & -1.58 && 3.14 & -1.43 && -1.21\\
    35 &&  219 & -2.46 && 2210& -1.82 && {\sc oom}& {\sc oom}&& 0.77 & -2.51 && 1.51 & -1.84 && 3.01 & -1.65 && -1.40\\
    40 && 550 & -2.22 && 5465& -1.59 && {\sc oom}& {\sc oom}&& 1.03 & -2.24 && 2.07 & -1.59 && 5.62 & -1.47 && -1.25\\
    \bottomrule
    \end{tabular}
    }
  \label{tab:constrained_comparision}
\end{table*}

\Cref{fig:inner_approximations} illustrates the inner approximations $\mathcal{S}_{2d}$ of $\mathcal{P}$ computed using both the standard SOS constraint~\cref{e:matrix-sos} and our sparsity-exploiting formulation~\cref{e:sos-reformulation-general}. \Cref{tab:constrained_comparision} lists the corresponding lower bounds $B^{\rm sos}_{m,d}$ on $B_{m,d}^*$, as well as the CPU time required to solve the SOS programs with MOSEK and the limit $B_{m,\infty}^*$ obtained from numerical integration of the minimum eigenvalue function of $P$ on the unit disk $\mathcal{K}$. Similar to what was observed in \cref{example:quartic_polynomial_matrix}, for fixed $d$ the dense SOS constraints give better bounds than the sparse ones. As expected, however, the sparsity-exploiting formulation requires significantly less time for large $m$, and all problem instances were solved within 10 seconds. In addition, the approximating sets $\mathcal{S}_{2d}$ in~\cref{fig:inner_approximations} provided by both SOS formulations for every combination of $d$ and $m$ are almost indistinguishable. For a given matrix size $m$, therefore, our sparse SOS formulation enables the construction of much better approximations to $\mathcal{P}$ by considering large values of $d$, which are beyond the reach of standard SOS formulations. This is important because, as shown in \cref{fig:slow_convergence_m15,tab:large-d-m-15} for $m=15$, the convergence to the set $\mathcal{P}$ and to the limit $B_{m,\infty}^*$ is slow as $d$ is raised. 

\begin{figure}
    \centering
    \hspace*{-26pt}\includegraphics[scale=0.90]{./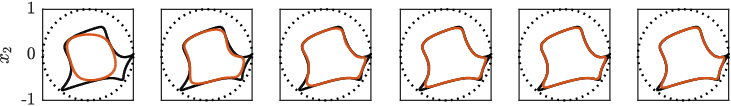}\\[-1pt]
    \hspace*{-30pt}\includegraphics[scale=0.90]{./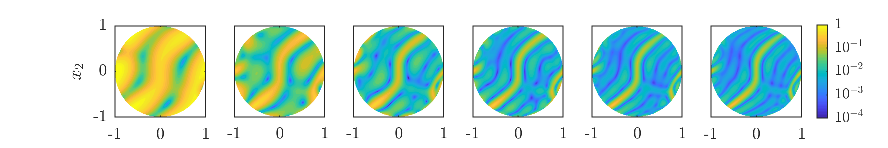}\\
    \hspace*{-10pt}
    \begin{tikzpicture}
    \node at (-4.5,0) {\small $x_1$};
    \node at (-2.7,0) {\small $x_1$};
    \node at (-0.9,0) {\small $x_1$};
    \node at (1,0) {\small $x_1$};
    \node at (2.8,0) {\small $x_1$};
    \node at (4.6,0) {\small $x_1$};
    \end{tikzpicture}
    \caption{\textit{Top:} Boundaries of the set $\mathcal{P}$ (black lines) and of the inner approximations $\mathcal{S}_{2d}$ (red lines) for the matrix $P(x)$ in~\cref{eq:large_PMI} with $m=15$, obtained with the sparse SOS formulation for $d=2$, $4$, $6$, $10$, $12$ and $14$ (left to right). \textit{Bottom:} Absolute difference between the optimal polynomial $s_{2d}$ and the minimum eigenvalue function of $P$ on the unit disk $\mathcal{K}$.}
    \label{fig:slow_convergence_m15}
\end{figure}

\begin{table*}
  \centering
  \caption{Lower bounds $B^{\rm sos}_{15,d}$ on the asymptotic value $B_{15,\infty}^* = -1.153$ of~\cref{eq:PMI} for $m=15$, calculated using the sparsity-exploiting SOS problem~\cref{e:sos-reformulation-general} with $\nu=0$ and the standard SOS constraint~\cref{e:matrix-sos}. The CPU time ($t$, seconds) to compute these bounds using MOSEK is also reported. Entries marked by {\sc oom} indicate ``out of memory'' runtime errors in MOSEK.  
  }
  {\small
	\begin{tabular}{c|cccccc}
    \toprule
    &$d$ 
    & 6 & 8 & 10 & 12 & 14\\ 
    \midrule
   \multirow{2}{*}{Sparse SOS \cref{e:sos-reformulation-general}}  &$B^{\rm sos}_{15,d}$ 
   &  $-1.257$  & {$-1.219$} &  {$-1.199$}  & {$-1.195$} & {$-1.191$}\\
    & $t$ 
    & $\phantom{-0}13.3$ & $\phantom{-0}85.1$ &  $\phantom{-}309.3$ & $\phantom{-}818.3$ & $\phantom{-.}2149$\\
    \midrule
  \multirow{2}{*}{Standard SOS \cref{e:matrix-sos}}  &$B^{\rm sos}_{15,d}$ 
  &  $-1.252$  & {$-1.216$} &  {\sc oom}   & {\sc oom} & {\sc oom}\\
    & $t$ 
    & $\phantom{-.}1133$ & $\phantom{-.}8250$ &  {\sc oom} & {\sc oom} & {\sc oom}\\
    \bottomrule
    \end{tabular}
    }
  \label{tab:large-d-m-15}
\end{table*}

\section{Proofs}
\label{s:proofs}
\subsection{Proof of \texorpdfstring{\cref{th:failure-basic-decomposition}}{Proposition~\ref{th:failure-basic-decomposition}}}
\label{ss:counterexample}

We construct polynomial matrices that cannot be decomposed according to~\cref{e:basic-chordal-decomposition} with polynomial $S_k$. To do so, we may assume that $n=1$ without loss of generality because univariate polynomial matrices are particular cases of multivariate ones.

First, fix $m=3$ and let $\mathcal{G}$ be the sparsity graph of the $3\times 3$ positive definite polynomial matrix considered in \cref{ex:nondecomposable-3d-matrix} for $k=1$,
\begin{equation*}
    P(x) = I_3 + \begin{bmatrix}
             1+x^2 & x+x^2 & 0\\
             x+x^2 & 2x^2  & x-x^2\\
             0     & x-x^2 & x^2
    \end{bmatrix}.
\end{equation*}
Observe that $\mathcal{G}$ is essentially the only connected but not complete graph with $m=3$: any other such graph can be reduced to $\mathcal{G}$ by reordering its vertices, which corresponds to a symmetric permutation of the polynomial matrix it describes. We have already shown in \cref{ex:nondecomposable-3d-matrix} that $P$ has no decomposition of the form~\cref{e:basic-chordal-decomposition} with polynomial $S_k$, so \cref{th:failure-basic-decomposition} holds for $m=3$.

The same $3\times 3$ matrix can be used to generate counterexamples for a general connected but not complete sparsity graph $\mathcal{G}$ with $m > 3$. 
Non-completeness implies that $\mathcal{G}$ must have at least two maximal cliques, while connectedness implies that every maximal clique $\mathcal{C}_i$ must contain at least two elements and intersect at least one other clique $\mathcal{C}_j$. Whenever $\mathcal{C}_i\cap \mathcal{C}_j \neq \emptyset$, therefore, there exist vertices $v_i \in \mathcal{C}_i \setminus \mathcal{C}_j$, $v_j \in \mathcal{C}_j \setminus \mathcal{C}_i$ and $v_k \in \mathcal{C}_i\cap \mathcal{C}_j$. Moreover, since $\mathcal{G}$ is chordal, Theorem~3.3 in~\cite{vandenberghe2015chordal} guarantees that it contains at least one simplicial vertex (cf. \cref{s:preliminaries} for a definition), which must belong to one and only one maximal clique. Upon reordering the vertices and the maximal cliques if necessary, we may therefore assume without loss of generality that: (i) $\mathcal{C}_1 = \{1,\ldots,r\}$ for some $r$; (ii) vertex $1$ is simplicial, so it belongs only to clique $\mathcal{C}_1$; (iii) vertex $2$ is in $\mathcal{C}_1 \cap \mathcal{C}_2$ and vertex $r+1$ is in $\mathcal{C}_2 \setminus \mathcal{C}_1$. 

Now, consider the positive definite $m \times m$ matrix
\begin{equation*}
    P(x) 
    =
    I_m + 
    E_{\{1,2,r+1\}}^\tr  
    \begin{bmatrix}
    1+x^2 & x+x^2 & 0\\
    x+x^2 & 2x^2  & x-x^2\\
    0     & x-x^2 & x^2
    \end{bmatrix}
    E_{\{1,2,r+1\}},
\end{equation*}
whose nonzero entries are on the diagonal or in the principal submatrix with rows and columns indexed by $\{1,2,r+1\}$. Note that the sparsity pattern of $P$ is compatible with the sparsity graph $\mathcal{G}$. We claim that no decomposition of the form~\cref{e:basic-chordal-decomposition} exists where each $S_k$ is a PSD polynomial matrix. 

For the sake of contradiction, assume that such a decomposition exists, so
\begin{equation*}
    P(x) = 
    E_{\mathcal{C}_1}^\tr S_1(x) E_{\mathcal{C}_1} + \sum_{k=2}^t E_{\mathcal{C}_k}^\tr S_k(x) E_{\mathcal{C}_k} =: E_{\mathcal{C}_1}^\tr S_1(x) E_{\mathcal{C}_1} + Q(x),
\end{equation*}
where $S_1(x)$ and $Q(x)$ are $r\times r$ and $m \times m$ PSD polynomial matrices, respectively. Since vertex $1$ is contained only in clique $\mathcal{C}_1$, the matrix $S_1$ must have the form
\begin{equation*}
    S_1(x) = \begin{bmatrix}
    2+x^2 & (x+x^2,\, 0,\,\ldots,\,0)\\
    (x+x^2,\, 0,\,\ldots,\,0)^\tr & T(x)
    \end{bmatrix}
\end{equation*}
for some $(r-1)\times(r-1)$ polynomial matrix $T$ to be determined.
For the same reason, the matrix $Q(x)$ can be partitioned as
\begin{equation*}
    Q(x) = \begin{bmatrix}
    0 & 0_{1 \times (r-1)} & 0_{1\times (m-r)}\\[0.5ex]
    0_{(r-1) \times 1} & A(x) & B(x) \\ 
     0_{(m-r) \times 1} &  B(x)^\tr & C(x)
    \end{bmatrix},
\end{equation*}
where $0_{p\times q}$ is a $p\times q$ matrix of zeros, $A$ is an $(r-1)\times (r-1)$ polynomial matrix to be determined, and the $(r-1) \times (m-r)$ block $B$ and the $(m-r) \times (m-r)$ block $C$ are given by
\begin{align*}
	B(x) &=  \left[\begin{array}{cl}
	x-x^2 & 0_{1 \times (m-r-1)}\\
	0_{(r-2) \times 1} & 0_{(r-2) \times (m-r-1)}
	\end{array}\right],
	&
	C(x) &= \begin{bmatrix}
	x^2+2 & 0_{1 \times (m-r-1)}\\
	0_{(m-r-1) \times 1} & I_{m-r-1}
	\end{bmatrix}.
\end{align*}

The block $T$ of $S_1$ and the block $A$ of $Q$ correspond to element of clique $\mathcal{C}_1$ that may belong also to other cliques. These blocks cannot be determined uniquely, but their sum must be equal to the principal submatrix of $P$ with rows and columns indexed by $\{2,\ldots,r\}$. In particular, we must have $A_{11}(x) = 2x^2+1 - T_{11}(x)$. Moreover, since $S_1$ and $Q$ are PSD by assumption, we may take appropriate Schur complements to find
\begin{align*}
    T(x) &\succeq  
	\left[\begin{array}{cl}
	\frac{x^2(1+x)^2}{x^2+2} & 0_{1 \times (r-2)}\\
	0_{(r-2) \times 1} & 0_{(r-2) \times (r-2)}
	\end{array}\right],
    &
    A(x) &\succeq
	\left[\begin{array}{cl}
	\frac{x^2(1-x)^2}{x^2+2} & 0_{1 \times (r-2)}\\
	0_{(r-2) \times 1} & 0_{(r-2) \times (r-2)}
	\end{array}\right].
\end{align*}
Using the identity $A_{11}(x) = 2x^2+1 - T_{11}(x)$, these conditions require
\begin{equation*}
    T_{11}(x) \geq \frac{x^2(1+x)^2}{x^2+2}, \qquad 
    2 x^2+1 - T_{11}(x) \geq \frac{x^2(1-x)^2}{x^2+2}.
\end{equation*}
However, just as in \cref{ex:nondecomposable-3d-matrix}, no polynomial $T_{11}(x)$ can satisfy these inequalities. We conclude that $P$ cannot admit a decomposition of the form~\cref{e:basic-chordal-decomposition} with PSD polynomial matrices $S_k$, which proves \cref{th:failure-basic-decomposition} in the general case.
\subsection{Proof of \texorpdfstring{\cref{th:weighted-chordal-decomposition}}{Theorem~\ref{th:weighted-chordal-decomposition}}}
\label{ss:proof-weighted-chordal-decomposition}

To establish \cref{th:weighted-chordal-decomposition} we adapt ideas by Kakimura~\cite{kakimura2010direct}, who proved the chordal decomposition theorem for constant PSD matrices (cf. \cref{th:ChordalDecompositionTheorem}) using the fact that symmetric matrices with chordal sparsity patterns admit an $LDL^\tr$ factorization with no fill-in~\cite{rose1970triangulated}. In \cref{app:diagonalization}, we use Schm\"udgen's diagonalization procedure~\cite{Schmudgen2009noncommutative} to prove the following analogous statement for polynomial matrices. 

\begin{proposition}
\label{Prop:diagonalization}
     	If $P(x)$ is an $m\times m$ symmetric polynomial matrix with chordal sparsity graph, there exist an $m \times m$ permutation matrix $T$, 
     	an invertible $m \times m$ lower-triangular polynomial matrix $L(x)$, and polynomials $b(x)$, $d_1(x),\,\ldots,\,d_m(x)$ such that 
         \begin{equation} \label{eq:diagonalization}
            b^4(x)\, T P(x) T^\tr = L(x) \Diag\left( d_1(x),\,\ldots,\,d_m(x) \right) L(x)^\tr.
         \end{equation}
         Moreover, $L$ has no fill-in in the sense that $L + L^\tr$ has the same sparsity as $TPT^\tr$. 
\end{proposition}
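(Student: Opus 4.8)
The plan is to emulate, over the polynomial ring $\mathbb{R}[x]$, the classical no-fill-in $LDL^\tr$ factorization of a matrix with chordal sparsity: one eliminates the vertices of $\mathcal{G}$ in a perfect elimination order, but since a polynomial pivot cannot be inverted, each elimination step is performed in the spirit of Schm\"udgen's diagonalization, i.e.\ by multiplying through by the pivot and recording it in a scalar polynomial prefactor.

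First I would fix the permutation $T$. Applying \cref{lemma:simplicial-vertices,lemma:chordal-subgraph} repeatedly, the vertices of $\mathcal{G}$ can be ordered as $v_1,\dots,v_m$ so that each $v_i$ is simplicial in the subgraph induced by $\{v_i,\dots,v_m\}$; let $T$ be the permutation matrix with $(TPT^\tr)_{ij}=P_{v_iv_j}$, so that $1<2<\dots<m$ is a perfect elimination ordering of the sparsity graph of $TPT^\tr$. After this relabelling I argue by induction on $m$, the case $m=1$ being trivial. Writing $P=\left[\begin{smallmatrix}p_{11}&r^\tr\\ r&P'\end{smallmatrix}\right]$, when $p_{11}\not\equiv 0$ a direct computation with $M:=\left[\begin{smallmatrix}1&0\\ -r&p_{11}I\end{smallmatrix}\right]$ gives $MPM^\tr=\Diag(p_{11},\,p_{11}Q)$ with $Q:=p_{11}P'-rr^\tr$, and, since $p_{11}M^{-1}$ has polynomial entries, multiplying $P=M^{-1}\Diag(p_{11},p_{11}Q)M^{-\tr}$ by $p_{11}^2$ yields
\begin{equation*}
p_{11}^{2}\,P \;=\; L_1\,\Diag\!\big(p_{11},\;p_{11}Q\big)\,L_1^\tr,\qquad L_1:=\begin{bmatrix}p_{11}&0\\ r&I_{m-1}\end{bmatrix}.
\end{equation*}
Since vertex $1$ is simplicial, the support of $r$ lies in the clique of its neighbours, so $rr^\tr$ introduces no new nonzeros: the sparsity of $Q$, hence of $p_{11}Q$, is contained in the chordal graph $\mathcal{G}[\{2,\dots,m\}]$, which inherits the perfect elimination ordering $2,\dots,m$. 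Applying the inductive hypothesis to $p_{11}Q$ and substituting then yields the claimed identity with $L:=L_1\,\operatorname{diag}(1,\hat L)$ still lower triangular, invertible because $\det L=p_{11}\det\hat L\not\equiv 0$, and with below-diagonal support contained in $\mathcal{G}$, i.e.\ with no fill-in.

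It remains to deal with the degenerate steps in which the current pivot vanishes identically. If the whole leading row is zero, one simply sets the corresponding $d_i=0$ and recurses on the trailing block. Otherwise one invokes Schm\"udgen's zero-pivot manoeuvre, which only combines rows and columns indexed by neighbours of the (simplicial) pivot vertex and hence again creates no fill-in, at the cost of one further squared polynomial factor. Tracking the multipliers accumulated over the $m$ elimination steps and tidying them into a single fourth power is the last ingredient, and it produces the prefactor $b(x)^4$ in the statement.

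The part I expect to be the real obstacle is precisely the interaction of the degenerate case with the two properties that must be preserved: (i) that Schm\"udgen's zero-pivot manipulations never reach outside the clique of the simplicial pivot, which is exactly where chordality enters, through the existence of a perfect elimination ordering and the fact that every rank-one update lives inside an already-complete clique; and (ii) that the product of all pivots and auxiliary multipliers arising along the way can be normalized to a perfect fourth power, so that the prefactor indeed takes the form $b(x)^4$. The invertibility of $L$ and the no-fill-in property then follow by the induction sketched above.
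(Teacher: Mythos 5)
Your proposal is correct and takes essentially the same route as the paper's proof: a Schm\"udgen-style elimination of a simplicial vertex (your identity $p_{11}^{2}P=L_1\Diag(p_{11},\,p_{11}Q)L_1^\tr$ is just a rescaled form of the paper's \cref{lemma:schmudegen}, which uses $u^4P=ZQZ^\tr$ with $Z=\left[\begin{smallmatrix}u&0\\ v&uI\end{smallmatrix}\right]$), the observation that simpliciality confines the rank-one update $rr^\tr$ to an already-complete clique so the trailing block remains chordal with no fill-in, and induction on $m$ with the accumulated pivots absorbed into $b^4$ (your multipliers come out as squares rather than fourth powers, but multiplying both sides by the same square and rescaling the diagonal fixes this, as you note). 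The one place you go beyond the paper is the identically-zero-pivot case, which the paper silently skips; be aware that your claim that Schm\"udgen's zero-pivot manoeuvre creates no fill-in is not automatic, since adding a neighbour's row and column to the pivot's can populate positions adjacent to that neighbour but not to the simplicial vertex itself.
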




Now, let $P(x)$ be a PSD polynomial matrix with chordal sparsity graph, and apply \cref{Prop:diagonalization} to diagonalize it. We will assume first that the permutation matrix $T$ is the identity, and remove this assumption at the end. 

Since $P$ is PSD, the polynomials $d_1(x),\,\ldots,\,d_m(x)$ in~\cref{eq:diagonalization} must be nonnegative globally and, by the Hilbert--Artin theorem~\cite{artin1927zerlegung}, can be written as sum of squares of rational functions. In particular, there exist SOS polynomials $f_1,\,\ldots,\,f_m$ and $g_1,\,\ldots,\,g_m$ such that 
$f_i(x) d_i(x) = g_i(x)$ for all $i=1,\,\ldots,\,m$. Therefore, we can write (omitting the argument $x$ for notational simplicity)
\begin{equation*}
    \prod_{j=1}^m f_j b^4 \, P = L 
    \Diag\bigg( g_1\prod_{j\neq 1} f_j ,\,\ldots,\,g_i\prod_{j\neq i} f_j,\,\ldots,\, g_m\prod_{j\neq m} f_j  \bigg)
    L^\tr.
\end{equation*}

Next, define the polynomial $\sigma := \prod_{j} f_j b^4$ and observe that it SOS because it is the product of SOS polynomials. For the same reason, the products $ g_i\prod_{j\neq i} f_j $  appearing on the right-hand side of the last equation are SOS polynomials. Thus, we can find an integer $s$ and polynomials $q_{11},\,\ldots,\,q_{m1},\,\ldots,\,q_{1s},\,\ldots,\,q_{ms}$ such that
\begin{equation}
\sigma P = 
\sum_{i=1}^s L 
\Diag\left(q_{1i}^2,\,\ldots,\,q_{mi}^2\right)
L^\tr
=: \sum_{i=1}^s H_i H_i^\tr,
\label{eq:HH}
\end{equation}
where, for notational simplicity,  we have introduced the lower-triangular matrices $$H_i := L \Diag\left(q_{1i},\,\ldots,\,q_{mi}\right).$$

Under our additional assumption that \cref{Prop:diagonalization} can be applied with $T=I$, \cref{th:weighted-chordal-decomposition} follows if we can show that 
\begin{equation} \label{eq:HiHi}
    H_i H_i^\tr = \sum_{k=1}^t E_{\mathcal{C}_k}^\tr S_{ik} E_{\mathcal{C}_k}
\end{equation}
for some SOS matrices $S_{ik}$ and each $i=1,\,\ldots,\,s$. Indeed, combining \cref{eq:HiHi} with \cref{eq:HH} and setting $S_k = \sum_{i=1}^s S_{ik}$ yields the desired decomposition~\cref{E:DecompositionSparseCone} for $P$. 

To establish~\cref{eq:HiHi}, denote the columns of $H_i$ by $h_{i1},\ldots,h_{im}$ and write
\begin{equation}\label{eq:HiHitranspose}
    H_iH_i^\tr = \sum_{j =1}^m h_{ij}h_{ij}^\tr.
\end{equation}
Since $H_i$ has the same sparsity pattern as $L$, the nonzero elements of each column vector $h_{ij}$ must be indexed by a clique $\mathcal{C}_{\ell_j}$ for some $\ell_j \in \{1, \ldots, t\}$. Thus, the nonzero elements of $h_{ij}$ can be extracted through multiplication by the matrix $E_{\mathcal{C}_{\ell_j}}$ and $h_{ij} = E_{\mathcal{C}_{\ell_j}}^\tr E_{\mathcal{C}_{\ell_j}}h_{ij}$.
Consequently,
\begin{equation} \label{eq:hij_clique}
    h_{ij}h_{ij}^\tr =  E_{\mathcal{C}_{\ell_j}}^\tr 
    \underbrace{\left(E_{\mathcal{C}_{\ell_j}}h_{ij}h_{ij}^\tr E_{\mathcal{C}_{\ell_j}}^\tr\right)}_{=:Q_{ij}}
    E_{\mathcal{C}_{\ell_j}}
\end{equation}
where $Q_{ij}$ is an SOS matrix by construction. Now, let $J_{ik} = \{j: \ell_j = k\}$ be the set of column indices $j$ such that column $h_{ij}$ is indexed by clique $\mathcal{C}_k$. These index sets are disjoint and $\cup_k J_{ik} = \{1,\ldots,m\}$, so substituting~\cref{eq:hij_clique} into~\cref{eq:HiHitranspose} we obtain
\begin{equation*}
    H_iH_i^\tr 
    = \sum_{j =1}^m E_{\mathcal{C}_{\ell_j}}^\tr Q_{ij} E_{\mathcal{C}_{\ell_j}}
    = \sum_{k=1}^t \sum_{j \in J_{ik}} E_{\mathcal{C}_k}^\tr Q_{ij} E_{\mathcal{C}_k}
    = \sum_{k=1}^t E_{\mathcal{C}_k}^\tr \bigg( \sum_{j \in J_{ik}} Q_{ij} \bigg) E_{\mathcal{C}_k}.
\end{equation*}
This is exactly~\cref{eq:HiHi} with matrices $S_{ik} = \sum_{j \in J_{ik}} Q_{ij}$, which are SOS because they are sums of SOS matrices. Thus, we have proved \cref{th:weighted-chordal-decomposition} for polynomial matrices $P$ to which \cref{Prop:diagonalization} can be applied with $T=I$.

The general case follows from a relatively straightforward permutation argument. First, apply the argument above to decompose the permuted matrix $T P T^\tr$, whose sparsity graph $\mathcal{G}'$ is obtained by reordering the vertices of the sparsity graph $\mathcal{G}$ of $P$ according to the permutation $T$. Second, observe that the cliques $\mathcal{C}_1,\ldots,\mathcal{C}_t$ of $\mathcal{G}$ are related to the cliques $\mathcal{C}'_1,\ldots,\mathcal{C}'_t$ of $\mathcal{G}'$ by the permutation $T$, so the matrices $E_{\mathcal{C}_k}$ and $E_{\mathcal{C}'_k}$ satisfy $E_{\mathcal{C}_k} = E_{\mathcal{C}'_k}T$. 
As required, therefore,
%
\begin{equation*}
    \sigma(x) P(x)
    = T^\tr \!\left[ \sigma(x) T P(x) T^\tr \right]\! T
    = T^\tr \!\left[ \sum_{k=1}^{t}\!  E_{\mathcal{C}'_k}^\tr S_k(x) E_{\mathcal{C}'_k} \!\right]\! T
    = \sum_{k=1}^{t} \!E_{\mathcal{C}_k}^\tr S_k(x) E_{\mathcal{C}_k}.
\end{equation*}
%
\subsection{Proof of \texorpdfstring{\cref{th:sparse-putinar}}{Theorem~\ref{th:sparse-putinar}}}
\label{ss:proof-sparse-putinar}

Our proof of \cref{th:sparse-putinar} follows the same steps used by Kakimura~\cite{kakimura2010direct} to prove the chordal decomposition theorem for constant PSD matrices (\cref{th:ChordalDecompositionTheorem}). Borrowing ideas from~\cite{grimm2007note}, this can be done with the help of the Weierstrass polynomial approximation theorem and the following version of Putinar's Positivstellensatz for polynomial matrices due to Scherer and Hol~\cite[Theorem~2]{scherer2006matrix}. 

\begin{theorem}[Scherer and Hol~\cite{scherer2006matrix}] \label{th:matrixPutinar} 
	Let $\mathcal{K}$ be a compact semialgebraic set defined as in~\cref{e:semialgebgraic-set} that satisfies the Archimedean condition~\cref{e:archimedean-condition}. If an $m\times m$ symmetric polynomial matrix $P(x)$ is strictly positive definite on $\mathcal{K}$, there exist $m \times m$ SOS matrices $S_0,\,\ldots,\,S_q$ such that 
	$P(x) = S_0(x) + \sum_{i=1}^q S_i(x)g_i(x)$.
\end{theorem}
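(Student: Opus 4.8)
The plan is to adapt Putinar's functional-analytic proof of the scalar Positivstellensatz to the matrix setting. A naive scalarisation $p(x,y)=y^{\tr}P(x)y$ followed by the classical scalar theorem does yield a representation of $p$, but recovering from it an SOS-matrix certificate with the correct quadratic dependence on $y$ is essentially as hard as the theorem itself, so I would instead argue directly at the matrix level. Let $\mathcal{S}$ be the real vector space of symmetric $m\times m$ polynomial matrices and let $\mathcal{M}\subset\mathcal{S}$ be the convex cone of all finite sums $S_0+\sum_{i=1}^q g_i S_i$ with each $S_i$ an SOS matrix; the goal is to show $P\in\mathcal{M}$ whenever $P\succ 0$ on $\mathcal{K}$.

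First I would transfer the Archimedean condition \cref{e:archimedean-condition} to the cone. Tensoring the scalar identity with $I$ (a scalar SOS $\sigma$ gives the SOS matrix $\sigma I$) and bounding entries shows that for every $F\in\mathcal{S}$ there is an integer $N$ with $NI-F\in\mathcal{M}$; in particular each scalar $N-x_i^2$ lies in the scalar quadratic module. Since $I=I^{\tr}I\in\mathcal{M}$, this Archimedean property forces any linear functional $L\colon\mathcal{S}\to\mathbb{R}$ that is nonnegative on $\mathcal{M}$ to satisfy $L(I)>0$ unless $L\equiv 0$. I would then argue by contradiction: if $P\notin\mathcal{M}$, Hahn--Banach separation of $P$ from the convex cone $\mathcal{M}$ provides a nonzero $L\ge 0$ on $\mathcal{M}$ with $L(P)\le 0$, normalised so that $L(I)=1$.

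The core of the proof is a matrix-valued Gelfand--Naimark--Segal construction built from $L$. On the space of $\mathbb{R}^m$-valued polynomials I would introduce the positive semidefinite quadratic form $u\mapsto L(uu^{\tr})$, which is nonnegative because $uu^{\tr}$ is a (rank-one) SOS matrix, and polarise it; quotienting by its null space and completing yields a Hilbert space $H$. Multiplication by each coordinate, $X_i\colon[u]\mapsto[x_iu]$, is symmetric, and boundedness follows from the Archimedean estimate applied through rank-one matrices: writing $N-x_i^2$ as a scalar combination of the $g_j$ with SOS coefficients and multiplying by $uu^{\tr}$ keeps it in $\mathcal{M}$, so $L\big((N-x_i^2)\,uu^{\tr}\big)\ge 0$ bounds $\|X_i[u]\|$. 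The commuting self-adjoint operators $X_1,\dots,X_n$ then admit, by the joint spectral theorem, a projection-valued measure whose support lies in $\mathcal{K}$ because $L(g_i\,uu^{\tr})\ge 0$ makes each $g_i(X)$ positive semidefinite. Pairing this spectral data with the constant vectors $e_1,\dots,e_m$ assembles a nonzero positive semidefinite matrix-valued measure $\Phi$ on $\mathcal{K}$ with $L(F)=\int_{\mathcal{K}}\mathrm{tr}\big(F(x)\,\mathrm{d}\Phi(x)\big)$ for all $F\in\mathcal{S}$. Since $P\succ 0$ on $\mathcal{K}$, this gives $L(P)>0$, contradicting $L(P)\le 0$ and proving $P\in\overline{\mathcal{M}}$.

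To pass from the closure to $\mathcal{M}$ itself I would use strict positivity and compactness: there is $\varepsilon>0$ with $P-\varepsilon I\succ 0$ on $\mathcal{K}$, so the separation argument places $P-\varepsilon I\in\overline{\mathcal{M}}$, while the Archimedean property makes $I$ an interior point of $\mathcal{M}$, whence $P=(P-\varepsilon I)+\varepsilon I\in\mathcal{M}$. I expect the main obstacle to be the matrix-valued moment step of the third paragraph---constructing the GNS Hilbert space, verifying that the multiplication operators are bounded, commuting and self-adjoint with joint spectrum inside $\mathcal{K}$, and assembling the spectral resolution into a genuine representing matrix measure (the matrix analogue of Haviland's theorem). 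By comparison, the Archimedean transfer and the final interior-point argument are routine once the scalar Archimedean identity \cref{e:archimedean-condition} is in hand.
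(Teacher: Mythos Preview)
Your approach is correct but genuinely different from the one the paper relies on. The paper does not prove \cref{th:matrixPutinar} itself (it is quoted from Scherer and Hol), but the Scherer--Hol argument is sketched in \cref{app:sparseMatrix} as part of the alternative proof of \cref{th:sparse-putinar}, and it is constructive rather than functional-analytic: an affine change of variables maps $\mathcal{K}$ into the unit simplex, a matrix version of Poly\'a's theorem (\cref{lemma:appendix-SOS-representation}, itself a combination of \cite[Lemmas~2--3]{scherer2006matrix}) expands the transformed matrix with positive definite constant coefficients, and the Archimedean identity is then substituted back to produce the weighted SOS-matrix certificate. Your GNS/Hahn--Banach route, by contrast, follows the spirit of Putinar's original scalar proof and of Cimpri\v{c}'s treatment of matrix $*$-algebras. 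The constructive argument buys explicit degree bounds and avoids operator theory entirely; your argument is more conceptual and transfers more readily to other Archimedean quadratic modules, at the cost of the matrix-moment step you correctly flag as the delicate part. One small point of hygiene: in your second paragraph the separation should be of $P$ from $\overline{\mathcal{M}}$ rather than from $\mathcal{M}$ (a point in $\overline{\mathcal{M}}\setminus\mathcal{M}$ cannot be separated), but your final paragraph already repairs this via the $\varepsilon I$ interior-point trick, so the overall logic stands.
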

\begin{remark}
It is also possible to establish \cref{th:sparse-putinar} by  modifying the proof of \cref{th:matrixPutinar} with the help of~\cref{th:ChordalDecompositionTheorem}. This alternative approach is technically more involved, but might be extended more easily to obtain sparsity-exploiting versions of the general result in~\cite[Corollary 1]{scherer2006matrix}, rather than of its particular version in \cref{th:matrixPutinar}. We leave this generalization to future research.
\end{remark}

Let $P(x)$ be an $m\times m$ polynomial matrix with chordal sparsity graph $\mathcal{G}$. 
If $m=1$ or $2$, \cref{th:sparse-putinar} is a direct consequence of~\cref{th:matrixPutinar}.
For $m\geq 3$, we proceed by induction assuming that \cref{th:sparse-putinar} holds for matrices of size $m-1$ or less. Without loss of generality, we assume that the sparsity graph $\mathcal{G}$ is not complete (otherwise, $P$ is dense and \cref{th:sparse-putinar} reduces to~\cref{th:matrixPutinar}) and connected (otherwise, $P$ and $\mathcal{G}$ can be replaced by their connected components). 

Since $\mathcal{G}$ is chordal, it has at least one simplicial vertex~\cite[Theorem~3.3]{vandenberghe2015chordal}. Relabelling vertices if necessary, which is equivalent to permuting $P$, we may assume that vertex $1$ is simplicial and that the first maximal clique of $\mathcal{G}$ is $\mathcal{C}_1 = \{1,\ldots,r\}$ with  $1< r < m$. Thus, $P(x)$ has the block structure
\begin{equation*}
    P(x) = \begin{bmatrix}
     a(x) & {b}(x)^\tr & 0 \\
     {b}(x) & U(x) & V(x)\\
     0 & V(x) & W(x)
    \end{bmatrix}
\end{equation*}
for some polynomial $a$, polynomial vector $b=(b_1,\,\ldots,\,b_{r-1})$, and polynomial matrices $U$ of dimension $(r-1) \times (r-1)$, $V$ of dimension $(r-1)\times (m-r)$, and $W$ of dimension $(m-r) \times (m-r)$. 

The polynomial $a$ must be strictly positive on $\mathcal{K}$ because $P$ is positive definite on that set, so we can apply one step of the Cholesky factorization algorithm to write
\begin{equation} \label{eq:step_s1}
    L(x) P(x) L(x)^\tr = \begin{bmatrix}
     a(x) & 0 & 0 \\
     0 & U(x) - a(x)^{-1}{b(x)}{b(x)}^\tr & V(x)\\
     0 & V(x)^\tr & W(x)
    \end{bmatrix},
\end{equation}
where 
\begin{equation*}
    L(x) := \begin{bmatrix}
     1 & 0 & 0\\
     -a(x)^{-1}{b(x)} & I & 0\\
     0 & 0 & I
    \end{bmatrix}.
\end{equation*}
The matrix on the right-hand side of~\cref{eq:step_s1} is positive definite on the compact set $\mathcal{K}$ because so is $P$ and $L$ is invertible.
Therefore, 
there exists $\varepsilon>0$ such that
\begin{equation} \label{eq:step2_1}
    \begin{bmatrix}
        U(x) - a(x)^{-1}{b(x)}{b(x)}^\tr & V(x)\\
        V(x)^\tr & W(x)
    \end{bmatrix} \succ 4 \varepsilon I \quad  \forall x \in \mathcal{K}. 
\end{equation}
Moreover, the rational entries of the matrix $a^{-1} b b^\tr$ are continuous on $\mathcal{K}$ because $a$ is strictly positive on that set, so we may apply the Weierstrass approximation theorem to choose a polynomial matrix $H(x)$ that satisfies
\begin{equation} \label{eq:step3}
    \phantom{\qquad \forall x \in \mathcal{K}.}
    -\varepsilon I  \preceq  H(x) - a(x)^{-1}{b(x)}{b(x)}^\tr \preceq \varepsilon I \quad \forall x \in \mathcal{K}.
\end{equation}

Next, consider the decomposition
\begin{equation} \label{eq:step_s4}
    P(x) = \begin{bmatrix}
     a(x) & {b}(x)^\tr & 0 \\
     {b}(x) & H(x) + 2\varepsilon I & 0\\
     0 & 0 & 0
    \end{bmatrix}
    +
    \begin{bmatrix}
     0 & 0 & 0 \\
     0 & U(x) - H(x)-2\varepsilon I & V(x)\\
     0 & V(x)^\tr & W(x)
    \end{bmatrix}.
\end{equation}
Combining~\cref{eq:step3} with {the strict positivity} of $a(x)$ on $\mathcal{K}$ we obtain 
\begin{equation*}
Q(x) 
:=    \begin{bmatrix}a(x) & {b(x)}^\tr \\{b(x)} & H(x)+2\varepsilon I\end{bmatrix}
\succeq \begin{bmatrix}a(x) & {b(x)}^\tr \\{b(x)} & a(x)^{-1}{b(x)}{b(x)}^\tr + \varepsilon I\end{bmatrix}
\succ 0 \qquad \forall x \in \mathcal{K},
\end{equation*}
where the last strict matrix inequality follows from the strict positivity of $a$ and Schur's complement conditions.
Since $Q$ is positive definite on $\mathcal{K}$, we may apply \cref{th:matrixPutinar} to find SOS matrices $T_{0},\,\ldots,\,T_{q}$ such that
\begin{equation}\label{e:Q-sos-decomposition}
    Q(x) = T_{0}(x) + \sum_{i=1}^q g_i(x) T_{i}(x).
\end{equation}
Moreover, for all $x \in \mathcal{K}$ inequalities~\cref{eq:step2_1} and~\cref{eq:step3} yield
\begin{align*}
    R(x)
    := \begin{bmatrix}U - H(x)- 2\varepsilon I & V(x)\\V(x)^\tr & W(x)\end{bmatrix}
    \succeq \begin{bmatrix}U - a(x)^{-1}{b(x)}{b(x)}^\tr- 3\varepsilon I & V(x)\\V(x)^\tr & W(x)\end{bmatrix}
    \succeq \varepsilon I.
\end{align*}
The sparsity of $R(x)$ is described by the subgraph $\tilde{\mathcal{G}}$ of $\mathcal{G}$ obtained by removing the simplicial vertex $1$ and its corresponding edges. This subgraph is chordal~\cite[Section~4.2]{vandenberghe2015chordal} and has either $t$ maximal cliques $\tilde{\mathcal{C}}_1 = \mathcal{C}_1\setminus\{1\},\,\tilde{\mathcal{C}}_2 = \mathcal{C}_2,\,\ldots,\,\tilde{\mathcal{C}}_t=\mathcal{C}_t$, or $t-1$ maximal cliques $\tilde{\mathcal{C}}_2=\mathcal{C}_2,\,\ldots,\,\tilde{\mathcal{C}}_{t}=\mathcal{C}_t$ (in the latter case, we set $\tilde{\mathcal{C}}_1=\emptyset$ for notational convenience). In either case, by the induction hypothesis, we can find  SOS matrices $Y_i$ and $\tilde{S}_{ik}$ such that (omitting the argument $x$ from all polynomials and polynomial matrices for notational simplicity)\footnote{Here we slightly abuse notation: the matrices $E_{\tilde{\mathcal{C}}_k}$ have size $\vert \tilde{\mathcal{C}}_k \vert \times (m-1)$ because they are defined using the graph $\tilde{\mathcal{G}}$, which has $m-1$ vertices. The matrices $E_{\mathcal{C}_k}$, instead, have size $\vert {\mathcal{C}}_k \vert \times m$ because they are defined using the graph $\mathcal{G}$, which has $m$ vertices.}
%
\begin{equation}\label{e:R-sos-decomposition}
R
= 
E_{\tilde{\mathcal{C}}_1}^\tr \bigg( Y_{0} + \sum_{i=1}^q g_i Y_{i} \bigg) E_{\tilde{\mathcal{C}}_1}
+ \sum_{k=2}^t E_{\tilde{\mathcal{C}}_k}^\tr \bigg(\tilde{S}_{0k} + \sum_{i=1}^q g_i \tilde{S}_{ik}\bigg) E_{\tilde{\mathcal{C}}_k}.
\end{equation}

The SOS decomposition~\cref{e:Q-sos-decomposition,e:R-sos-decomposition} can now be combined with~\cref{eq:step_s4} to derive the desired SOS decomposition for $P(x)$. The process is straightforward but cumbersome in notation, because we need to handle matrices of different dimensions. For each $i \in \{0,\ldots,q\}$ and $k \in \{1,\ldots,t\}$ define the matrices
\begin{align*}
&&
    Z_i(x) &:= \begin{bmatrix} 0 & 0 \\ 0 & Y_i(x)\end{bmatrix},
    &
    S_{ik}(x) &:= \begin{bmatrix} 0 & 0 \\ 0 & \tilde{S}_{ik}(x)\end{bmatrix},
&&
\end{align*}
and note that
\begin{align*}
    \begin{bmatrix}
    0\hspace{6pt} & 0 \\ 0\hspace{6pt} & E_{\tilde{\mathcal{C}}_1}^\tr Y_i(x)E_{\tilde{\mathcal{C}}_1}
    \end{bmatrix}
    &=
    E_{\mathcal{C}_1}^\tr Z_i(x) E_{\mathcal{C}_1}.
    &
    \begin{bmatrix}
    0 & 0 \\ 0 & E_{\tilde{\mathcal{C}}_k}^\tr \tilde{S}_{ik}(x)E_{\tilde{\mathcal{C}}_k}
    \end{bmatrix}
    &=
    E_{\mathcal{C}_k}^\tr {S}_{ik}(x) E_{\mathcal{C}_k}.
\end{align*}
We therefore obtain
\begin{equation*}
   \begin{bmatrix}
    0 & 0 \\ 0 & R
    \end{bmatrix} 
    =
    E_{\mathcal{C}_1}^\tr \bigg( Z_{0} + \sum_{i=1}^q g_i Z_{i} \bigg) E_{\mathcal{C}_1}
    + \sum_{k=2}^t E_{\mathcal{C}_k}^\tr \bigg(S_{0k} + \sum_{i=1}^q g_i S_{ik}\bigg) E_{\mathcal{C}_k}
\end{equation*}
and can rewrite the decomposition~\cref{eq:step_s4} as
\begin{equation*}
    P 
    = E_{\mathcal{C}_1}^\tr \bigg( Q + Z_{0} + \sum_{i=1}^q g_i Z_{i} \bigg) E_{\mathcal{C}_1}
    + \sum_{k=2}^t E_{\mathcal{C}_k}^\tr \bigg(S_{0k} + \sum_{i=1}^q g_i S_{ik}\bigg) E_{\mathcal{C}_k}.
\end{equation*}
Substituting the decomposition of $Q$ from~\cref{e:Q-sos-decomposition}, letting $S_{i1}(x) := T_i(x) + Z_i(x)$, and reintroducing the $x$-dependence of various terms we arrive at
\begin{equation*}
    P(x) = \sum_{k=1}^t E_{\mathcal{C}_k}^\tr \bigg(S_{0k}(x) + \sum_{i=1}^q g_i(x) S_{ik}(x)\bigg) E_{\mathcal{C}_k}.
\end{equation*}
which is the desired SOS decomposition of $P(x)$. 
\subsection{Proof of \texorpdfstring{\cref{th:sparse-putinar-vasilescu-homog}}{Theorem~\ref{th:sparse-putinar-vasilescu-homog}}}
\label{ss:proof-sparse-putinar-vasilescu}

We combine the argument given in~\cite{Dinh2021} for general (dense) polynomial matrices with \cref{th:sparse-putinar} and the following auxiliary result, proven in \cref{app:sos-symmetric-matrix-lemma}.
\begin{lemma}\label{lemma:sos-symmetric-matrix}
	Let $S(x)$ be an SOS polynomial matrix satisfying $S(x)=S(-x)$. For any real number $r\geq 0$ and any integer $\omega$ such that $2\omega \geq \deg(S)$, the matrix $\|x\|^{2\omega} S(r\|x\|^{-1}x)$ is polynomial of degree $2\omega$, homogeneous, and SOS.
\end{lemma}

Choose any nonzero $x_0 \in \mathcal{K}$, let $r = \|x_0\| \neq 0$, and observe that the (nonempty) semialgebraic set $\mathcal{K}' := \mathcal{K} \cap \{x \in \mathbb{R}^n: \pm(r^2 - \|x\|^2) \geq 0 \}$ satisfies the Archimedean condition~\cref{e:archimedean-condition}. Set $g_{q+1}(x) = r^2 - \|x\|^2$ and $g_{q+2}(x) = \|x\|^2 -r^2$ for notational convenience. Since the homogeneous polynomial matrix $P(x')$ is strictly positive definite for all $x' \in \mathcal{K}' \subseteq \mathcal{K}\setminus \{0\}$, we can apply \cref{th:sparse-putinar} to find SOS matrices $S_{j,k}'$ (not necessarily homogeneous) such that
\begin{equation}\label{e:augmented-putinar-decomposition}
P(x') = 
\sum_{k=1}^t E_{\mathcal{C}_k}^\tr 
	\bigg( \hat{S}_{0,k}(x') + \sum_{j=1}^{q+2} g_j(x')\hat{S}_{j,k}(x') \bigg)
	E_{\mathcal{C}_k}.
\end{equation} 
Moreover, standard symmetry arguments (see, e.g.,~\cite{lofberg2009pre,Riener2013}) reveal that we may take $\hat{S}_{j,k}(-x')=\hat{S}_{j,k}(x')$ for all $j$ and $k$ 
because the matrix $P$ and the polynomials $g_1,\ldots,g_{q+2}$ are invariant under the transformation $x \mapsto -x$. The latter assertion is true because $P$ and $g_1,\ldots,g_{q}$ are homogeneous and have even degree by assumption, while $g_{q+1}(-x')=g_{q+1}(x')$ and $g_{q+2}(-x')=g_{q+2}(x')$ by construction. 

Next, set $2d_0 = \deg(P)$ and $2d_j = \deg(g_j)$ for all $j=1,\ldots,q$. Given any nonzero $x \in \mathbb{R}^n$, evaluating~\cref{e:augmented-putinar-decomposition} at the point $x' = r x \|x\|^{-1}$ yields
\begin{equation}\label{e:augmented-putinar-decomposition-2}
\frac{r^{2d_0} }{\|x\|^{2d_0}} 
P(x) = 
\sum_{k=1}^t E_{\mathcal{C}_k}^\tr 
\bigg[ \hat{S}_{0,k}\!\left(\frac{r x}{\|x\|} \right) + \sum_{j=1}^{q} \frac{r^{2d_j}}{\|x\|^{2d_j}}  \,g_j(x)\, \hat{S}_{j,k}\!\left(\frac{r x}{\|x\|} \right) \bigg]
E_{\mathcal{C}_k},
\end{equation}
where we have used the fact that $g_{q+1}(r x \|x\|^{-1}) = g_{q+2}(r x \|x\|^{-1})  = 0$.
Let $\omega$ be the smallest integer such that
\begin{equation*}
2\omega \geq 2d_0 + \sum_{j} 2d_j + \sum_{j,k} \deg(\hat{S}_{j,k})
\end{equation*}
and set $\nu := \omega - d_0$. Multiplying~\cref{e:augmented-putinar-decomposition-2} by $\|x\|^{2\omega}$ and rearranging, we obtain
\begin{equation}\label{e:augmented-putinar-decomposition-3}
\|x\|^{2\nu} 
P(x) = 
\sum_{k=1}^t E_{\mathcal{C}_k}^\tr 
\bigg( {S}_{0,k}\!\left(x\right) + \sum_{j=1}^{q}  g_j(x)\, {S}_{j,k}\!\left(x\right) \bigg)
E_{\mathcal{C}_k}
\end{equation}
with
\begin{align*}
S_{0,k}(x) &:= \frac{\|x\|^{2\omega}}{r^{2d_0}} \,\hat{S}_{0,k}\!\left(\frac{r x}{\|x\|} \right), &
S_{j,k}(x) &:= \frac{\|x\|^{2\omega - 2d_j}}{r^{2d_0-2d_j} }  \, \hat{S}_{j,k}\!\left(\frac{r x}{\|x\|} \right).
\end{align*}
\Cref{lemma:sos-symmetric-matrix} guarantees that these matrices are homogeneous and SOS. Since~\cref{e:augmented-putinar-decomposition-3} clearly holds also for $x = 0$,  it is the desired chordal SOS decomposition of $P$.

\section{Conclusion}
\label{Section:Conclusion}
We have proven SOS decomposition theorems for positive semidefinite polynomial matrices with chordal sparsity (\cref{th:weighted-chordal-decomposition,,th:sparse-reznick-homog,,th:sparse-reznick,,th:sparse-putinar-vasilescu-homog,,th:sparse-putinar-vasilescu,,th:sparse-putinar}), which can be viewed as sparsity-exploiting versions of the Hilbert--Artin, Reznick, Putinar, and Putinar--Vasilescu Positivstellens\"atze for polynomial matrices. Our theorems extend in a nontrivial way a classical chordal decomposition result for sparse numeric matrices~\cite{agler1988positive}, and we have shown that a na\"ive adaptation of this classical result to sparse polynomial matrices fails (\cref{th:failure-basic-decomposition}). 

In addition to being interesting in their own right, our SOS chordal decompositions have two important consequences. First, they can be combined with a straightforward scalarization argument to deduce new SOS representation results for nonnegative polynomials that are quadratic and correlatively sparse with respect to a subset of independent variables (\cref{corollary:globalcase,,corollary:global_even,corollary:compact}).  These statements specialize a sparse version of Putinar's Positivstellensatz proven in~\cite{lasserre2006convergent}, as well as recent sparsity-exploiting extensions of Reznick's Positivstellensatz~\cite{mai2020sparse}. 
Second, \cref{th:sparse-reznick-homog,,th:sparse-reznick,,th:sparse-putinar-vasilescu-homog,,th:sparse-putinar-vasilescu,,th:sparse-putinar} enable us to build new sparsity-exploiting hierarchies of SOS reformulations for convex optimization problems subject to large-scale but sparse polynomial matrix inequalities. These hierarchies are asymptotically exact for problems that have strictly feasible points and whose matrix inequalities are either imposed on a compact set satisfying the Archimedean condition (\cref{th:sos-program-convergence_putinar}), or satisfy additional homogeneity and strict positivity conditions (\cref{th:sos-program-convergence-homog,th:sos-program-convergence-inhomog}). Moreover, and perhaps most importantly, our SOS hierarchies have significantly lower computational complexity than traditional ones when the maximal cliques of the sparsity graph associated to the polynomial matrix inequality are much smaller than the matrix. As demonstrated by the numerical examples in \cref{s:examples}, this makes it possible to solve optimization problems with polynomial matrix inequalities that are well beyond the reach of standard SOS methods, without sacrificing their asymptotic convergence. 

It would be interesting to explore if the results we have presented in this work can be extended in various directions. For example, it may be possible to adapt the analysis in~\cite{scherer2006matrix} to derive a more general version of \cref{th:sparse-putinar}. It should also be possible to deduce explicit degree bounds for the SOS matrices that appear in all of our decomposition results. 
Stronger decomposition results for inhomogeneous polynomial matrix inequalities imposed on semialgebraic sets that are noncompact or do not satisfy the Archimedean condition would also be of interest. For instance, \cref{th:sparse-putinar-vasilescu,th:sparse-reznick} have restrictive assumptions on the behaviour of the leading homogeneous part of a polynomial matrix. These assumptions often are not met and, in such cases, SOS reformulations of convex optimization problems with polynomial matrix inequalities cannot be guaranteed to converge using \cref{th:sparse-putinar-vasilescu,th:sparse-reznick}. Finally, the chordal decomposition problem for semidefinite matrices has a dual formulation that considers positive semidefinite completion of partially specified matrices; see, e.g.,~\cite[Chapter 10]{vandenberghe2015chordal}. Building on a notion of SOS matrix completion introduced in~\cite{zheng2018decomposition}, it may be possible to establish SOS completion results for polynomial matrices.
All of these extensions will contribute to building a comprehensive theory for SOS decomposition and completion of polynomial matrices, which will enable the application of SOS programming to tackle large-scale optimization problems with semidefinite constraints on sparse polynomial matrices.

\vspace*{4ex}
\noindent
\textbf{Acknowledgements.}
We would like to thank Antonis Papachristodoulou, Pablo Parrilo, J. William Helton, Igor Klep and Licio Romao for insightful conversations that have led to this work. We also thank the reviewers and Associate Editor, who motivated us to prove stronger theorems than those included in our original manuscript. Their suggestions considerably improved the quality of this work.

\appendix
\section{The matrix in \texorpdfstring{\cref{ex:nondecomposable-multivariate}}{Example \ref{ex:nondecomposable-multivariate}} is positive definite}\label{app:motzkin-pd}

For $x=0$, $P(0)=\frac{1}{100}\left[\begin{smallmatrix}101 & 0 & 0\\ 0 & 100 & 0\\ 0 & 0 & 100\end{smallmatrix} \right]$ is positive definite. For nonzero $x$, write
\begin{equation*}
P(x) = \begin{bmatrix} 0.01 & -0.01x_1 & 0 \\
-0.01x_1 & x_1^6+x_2^6+\tfrac12 & -x_2 \\
0& -x_2 & 1 \end{bmatrix}
+
\begin{bmatrix} 0.01(x_1^6+x_2^6)+ q(x) & 0 & 0 \\
0 & \tfrac12 & 0 \\
0& 0 & x_1^6 + x_2^6 \end{bmatrix}.
\end{equation*}
Since the second matrix on the right-hand side is positive definite, it suffices to show that the first one is PSD. This is true because its diagonal entries, its determinant, and its $2\times 2$ principal minors are nonnegative (confirmation of this is left to the reader).
%
%

\section{Homogenization for \texorpdfstring{\cref{corollary:globalcase}}{Corollary~\ref{corollary:globalcase}}}
\label{app:homogenization}
If $p(x,y)$ is quadratic but not homogeneous with respect to $y$, introduce a new variable $z$ and define 
$$q(x,y,z) := z^2 p(x, z^{-1}y).$$
This polynomial is well defined when $z \neq 0$, can be extended by continuity to $z=0$, is both homogeneous and quadratic with respect to $(y,z)$, and satisfies $q(x,y,1)=p(x,y)$.

Since $z$ multiplies all entries of $y$, the correlative sparsity graph of $q$ with respect to $(y,z)$ is chordal and has maximal cliques
$\hat{\mathcal{C}}_1 = \mathcal{C}_1 \cup \{m+1\}$, 
$\hat{\mathcal{C}}_2 = \mathcal{C}_2 \cup \{m+1\}$, 
$\ldots$, 
$\hat{\mathcal{C}}_t = \mathcal{C}_t \cup \{m+1\}$, 
where $\mathcal{C}_1,\,\ldots,\,\mathcal{C}_t$ are the maximal cliques of the correlative sparsity graph of $p$ with respect to $y$. {Moreover, since both $p(x,y)$ and $\sum_{\alpha,|\beta|=2}c_{\alpha,\beta} x^\alpha y^\beta$ are nonnegative globally by assumption, $q(x,y,z)$ is nonnegative for all $x$, $y$ and $z$.} Applying the result of \cref{corollary:globalcase} for the homogeneous case to $q$, we find SOS polynomials $\hat{\sigma}_k(x,y_{\mathcal{C}_k},z)$, each homogeneous and quadratic in $y_{\mathcal{C}_k}$ and $z$, such that
\begin{equation*}
    \sigma_0(x) q(x,y,z) = \sum_{k=1}^t \hat{\sigma}_k(x,y_{\mathcal{C}_k},z).
\end{equation*}
Setting $z=1$ yields
\begin{equation*}
    \sigma_0(x) p(x,y) = \sum_{k=1}^t \hat{\sigma}_k(x,y_{\mathcal{C}_k},1),
\end{equation*}
which is the decomposition stated in \cref{corollary:globalcase} with polynomials $\sigma_k(x,y_{\mathcal{C}_k}) :=  \hat{\sigma}_k(x,y_{\mathcal{C}_k},1)$ that are quadratic (but not necessarily homogeneous) in $y_{\mathcal{C}_k}$.

\section{Proof of \texorpdfstring{\cref{Prop:diagonalization}}{Proposition~\ref{Prop:diagonalization}}}
\label{app:diagonalization}


\Cref{Prop:diagonalization} is obvious if $m=1$, and follows directly from the next lemma if $m= 2$.
 
\begin{lemma}[Schm\"udgen~\cite{Schmudgen2009noncommutative}] \label{lemma:schmudegen}
    Let $P(x)$ be an $m \times m$ polynomial matrix with block form
    \begin{equation*}
        P(x) = \begin{bmatrix} u(x) & v(x)^\tr \\ v(x) & W(x) \end{bmatrix},
    \end{equation*}
    where $u$ is a polynomial, $v=\begin{bmatrix}v_1,\ldots,v_{m-1}\end{bmatrix}^\tr$ is a polynomial vector, and $W$ is a symmetric $(m-1)\times (m-1)$ polynomial matrix. Then, $u^4(x) P(x) = Z(x) Q(x) Z(x)^\tr$ with
    \begin{align*}
        Z(x) &= \begin{bmatrix} u(x) & 0 \\ v(x) & u(x) I_{r-1} \end{bmatrix},
        &
        Q(x) &= \begin{bmatrix} u^3(x) & 0 \\ 0 & u(x)^2 W(x) - u(x) v(x) v(x)^\tr \end{bmatrix}.
    \end{align*}
\end{lemma}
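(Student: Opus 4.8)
The plan is to verify the claimed identity $u^4(x)\,P(x) = Z(x)\,Q(x)\,Z(x)^\tr$ by a short block computation, organised so that the role of the scaling by $u^4$ is transparent. The starting point is the standard one-step block factorisation (a Schur-complement identity): wherever $u(x)\neq 0$ one has, as an identity of rational matrices,
\begin{equation*}
    P(x) = \begin{bmatrix} 1 & 0 \\ u^{-1} v & I \end{bmatrix}
           \begin{bmatrix} u & 0 \\ 0 & W - u^{-1} v v^\tr \end{bmatrix}
           \begin{bmatrix} 1 & u^{-1} v^\tr \\ 0 & I \end{bmatrix},
\end{equation*}
with $I=I_{m-1}$. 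The content of the lemma is simply that the two rational factors become \emph{polynomial} after suitable scaling: writing $\hat L := \bigl[\begin{smallmatrix} 1 & 0 \\ u^{-1} v & I\end{smallmatrix}\bigr]$ and $\hat Q := \bigl[\begin{smallmatrix} u & 0 \\ 0 & W - u^{-1} v v^\tr\end{smallmatrix}\bigr]$, one has $Z = u\,\hat L$ and $Q = u^2\,\hat Q$, both polynomial, and $\hat L^\tr = u^{-1}Z^\tr$. Hence $Z Q Z^\tr = (u\hat L)(u^2\hat Q)(u\hat L^\tr) = u^4\,\hat L\hat Q\hat L^\tr = u^4 P$ wherever $u\neq 0$; since both sides are polynomial matrices and $\{x:u(x)\neq 0\}$ is dense whenever $u\not\equiv 0$, the identity holds identically.

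To avoid the rational-function detour altogether — and to cover the degenerate case $u\equiv 0$, in which everything in sight is the zero matrix — I would alternatively just multiply out $ZQZ^\tr$ block by block, partitioning $Z$, $Q$, $Z^\tr$ conformally with the $1$-vs-$(m-1)$ partition of $P$. First,
\begin{equation*}
    ZQ = \begin{bmatrix} u & 0 \\ v & uI \end{bmatrix}\begin{bmatrix} u^3 & 0 \\ 0 & u^2 W - u v v^\tr\end{bmatrix}
       = \begin{bmatrix} u^4 & 0 \\ u^3 v & u^3 W - u^2 v v^\tr \end{bmatrix},
\end{equation*}
and then multiplying on the right by $Z^\tr = \bigl[\begin{smallmatrix} u & v^\tr \\ 0 & uI\end{smallmatrix}\bigr]$ yields the $(1,1)$, $(1,2)$, $(2,1)$ blocks immediately as $u^5$, $u^4 v^\tr$, $u^4 v$, which match $u^4P$. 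The only block that is not completely automatic is the trailing $(m-1)\times(m-1)$ one, where $u^3 v v^\tr + (u^3 W - u^2 v v^\tr)(uI) = u^3 v v^\tr + u^4 W - u^3 v v^\tr = u^4 W$; the two $u^3 v v^\tr$ terms cancel, which is exactly the Schur-complement cancellation that makes $W - u^{-1}vv^\tr$ (rather than $W$) the correct reduced matrix. Assembling the four blocks gives $u^4 P$.

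Since the argument is a finite sequence of block products, there is essentially no obstacle. The only points requiring a little care are (i) keeping the identity block of the right size, $I_{m-1}$ (a typographical $I_{r-1}$ notwithstanding), and (ii) tracking the single cancellation in the trailing block noted above. Symmetry of the output needs no separate check: $Q$ is symmetric because $W=W^\tr$ and $vv^\tr$ is symmetric, so $ZQZ^\tr$ is symmetric as well, consistently with $P$ being symmetric.
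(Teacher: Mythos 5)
Your block-by-block computation is correct and is exactly the ``direct computation'' the paper alludes to when it states the lemma without proof; the trailing-block cancellation $u^3vv^\tr + (u^3W - u^2vv^\tr)(uI) = u^4W$ is the only nontrivial step and you have it right (as well as the observation that $I_{r-1}$ should read $I_{m-1}$). The preliminary Schur-complement discussion is a nice motivation but adds nothing the direct verification does not already establish.
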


For $m\geq 3$, we use an induction procedure that combines Schm\"udgen's lemma with the zero fill-in property of the Cholesky algorithm for matrices with chordal sparsity.

Assume that \cref{Prop:diagonalization} holds for all polynomial matrices of size $m-1$ with chordal sparsity. We claim that it holds also for polynomial matrices of size $m$. Let $P(x)$ be any $m \times m$ matrix whose sparsity graph $\mathcal{G}$ is chordal.
By Theorem~3.3 in~\cite{vandenberghe2015chordal}, the graph $\mathcal{G}$ has at least one simplicial vertex. 
Let $\Pi$ be a permutation matrix and denote by $\mathcal{G}_\Pi$ the sparsity graph of the permuted matrix $\Pi P \Pi^\tr$, which is obtained simply by reordering the vertices of $\mathcal{G}$ as specified by the permutation $\Pi$. We choose $\Pi$ such that vertex $1$ is simplicial for $\mathcal{G}_\Pi$ and such that the first maximal clique of $\mathcal{G}_\Pi$ is $\mathcal{C}_1 = \{1,\ldots,r\}$ for some $r \geq 1$. This means that the matrix $\Pi P \Pi^\tr$ can be partitioned into the block form
 \begin{equation} \label{Eq:Induction_1}
      \Pi P(x) \Pi^\tr = \begin{bmatrix}
            a(x) & q(x)^\tr & 0 \\
            q(x) & F(x)     & G(x)^\tr \\
            0    & G(x)     & H(x) \\
        \end{bmatrix},
\end{equation}
where $a$ is a polynomial, $q=\begin{bmatrix}q_1,\ldots,q_{r-1}\end{bmatrix}^\tr$ is a vector of polynomials, and $F$, $G$ and $H$ are polynomial matrices of suitable dimensions.
    
Applying \cref{lemma:schmudegen} with 
$$
u=a, \quad v = \begin{bmatrix}q\\0\end{bmatrix}, \quad W = \begin{bmatrix}F&G^\tr\\G&H\end{bmatrix}
$$ 
yields (omitting the argument $x$ from all polynomials to ease the notation)
\begin{equation} \label{Eq:Induction_s1a}
    a^4 \Pi P \Pi^\tr = 
    \underbrace{\begin{bmatrix} a & 0 & 0\\q & a I & 0 \\ 0 & 0 &  a I\end{bmatrix}}_{:=Z}
    \underbrace{\begin{bmatrix} a^3 & 0 & 0 \\ 0 &a^2 F  - aqq^{\tr} & a^2G^\tr \\0 & a^2 G & a^2H\end{bmatrix}}_{:=Q}
    \underbrace{\begin{bmatrix} a & q^\tr & 0\\0 & a I & 0 \\ 0 & 0 &  a I\end{bmatrix}}_{:=Z^\tr}.
\end{equation}

Next, consider the matrix
\begin{equation*}
    P'(x) = \begin{bmatrix}a^2 F  - aqq^{\tr} & a^2G^\tr \\a^2 G & a^2H\end{bmatrix}.
\end{equation*}
The sparsity graph $\mathcal{G}'$ of $P'$ coincides with the subgraph of $\mathcal{G}$ obtained by removing vertex~$1$. Since this vertex is simplicial and $\mathcal{G}$ is chordal, $\mathcal{G}'$ is also chordal~\cite[Section 4.2]{vandenberghe2015chordal}. Thus, $P'$ is an $(m-1)\times (m-1)$ matrix with a chordal sparsity graph. 
By our induction assumption, there exists 
an $(m-1)\times(m-1)$ permutation matrix $\Lambda$,
an $(m-1)\times(m-1)$ lower-triangular polynomial matrix $R$, a polynomial $s$, and polynomials $d_2,\ldots,d_{m}$ such that
\begin{equation*}
    s^4 \Lambda P' \Lambda^\tr = R \Diag(d_2,\ldots,d_{m}) R^\tr.
\end{equation*}
Moreover, $R + R^\tr$ has the same sparsity pattern as $\Lambda P' \Lambda^\tr$,  meaning that $\Lambda^\tr (R + R^\tr) \Lambda$ has the same sparsity as $P'$.
Combining this factorization with~\cref{Eq:Induction_s1a} we obtain
\begin{align}
    s^4 a^4 \Pi P \Pi^\tr &= Z  \begin{bmatrix} s^4 a^3 & 0  \\ 0 & s^4 P' \end{bmatrix} Z^\tr 
              \nonumber\\
              &= Z \begin{bmatrix} s^4 a^3 & 0  \\ 0 & \Lambda^\tr R \Diag(d_2,\,\ldots,\,d_m) R^\tr \Lambda \end{bmatrix} Z^\tr 
              \nonumber\\
              &= Z
                    \begin{bmatrix} 1 & 0 \\0& \Lambda^\tr \end{bmatrix} 
                    \begin{bmatrix} 1 & 0 \\0& R \end{bmatrix} 
                    \Diag( s^4 a^3,\,d_2,\,\ldots,\,d_m) 
                    \begin{bmatrix} 1 & 0 \\0& R^\tr \end{bmatrix}
                    \begin{bmatrix} 1 & 0 \\0& \Lambda \end{bmatrix} 
                 Z^\tr.
              \label{Eq:Induction_s3}
\end{align}

To conclude the proof of \cref{Prop:diagonalization}, set $b := sa$, $d_1:= s^4 a^3$ and define
\begin{align*}
    T := \begin{bmatrix} 1 & 0 \\0& \Lambda \end{bmatrix} \Pi,
    \qquad
    L :=
    \begin{bmatrix} 1 & 0 \\0& \Lambda \end{bmatrix} 
    Z 
    \begin{bmatrix} 1 & 0 \\0& \Lambda^\tr \end{bmatrix} 
    \begin{bmatrix} 1 & 0 \\0& R \end{bmatrix}.
\end{align*}
Note that $T$ is a permutation matrix, while $L$ is lower triangular.
Pre- and post-multiplying identity~\cref{Eq:Induction_s3} by $\left[\begin{smallmatrix} 1 & 0 \\0& \Lambda \end{smallmatrix}\right]$ and $\left[\begin{smallmatrix} 1 & 0 \\0& \Lambda^\tr \end{smallmatrix}\right]$, respectively, gives
$$
b^4 T P T^\tr = L \Diag(d_1,\ldots,d_m) L^\tr,
$$
which is the desired factorization.
It remains to verify that $L + L^\tr$ has the same sparsity pattern as $T P T^\tr$ or, equivalently, that $T^\tr(L+L^\tr)T$ has the same sparsity pattern as $P$. To see this, write $Z=\left[\begin{smallmatrix}a & 0\\ v & aI\end{smallmatrix}\right]$ with $v = \left[\begin{smallmatrix}q\\ 0\end{smallmatrix}\right]$ and observe that 
\begin{equation*}
\begin{aligned}
    T^\tr (L + L^\tr) T &= \Pi^\tr \begin{bmatrix} 1 & 0 \\0& \Lambda^\tr  \end{bmatrix} \left(\begin{bmatrix} 1 & 0 \\0& \Lambda \end{bmatrix} 
    Z 
    \begin{bmatrix} 1 & 0 \\0& \Lambda^\tr \end{bmatrix} 
    \begin{bmatrix} 1 & 0 \\0& R \end{bmatrix} + L^\tr \right) \begin{bmatrix} 1 & 0 \\0& \Lambda \end{bmatrix} \Pi \\
    & =  \Pi^\tr\left( 
        \begin{bmatrix}a & 0\\ v & aI\end{bmatrix}  
        \begin{bmatrix} 1 & 0 \\0& \Lambda^\tr R \Lambda \end{bmatrix}
        + 
        \begin{bmatrix} 1 & 0 \\0& \Lambda^\tr R^\tr \Lambda \end{bmatrix}
        \begin{bmatrix}a & v^\tr\\ 0 & aI\end{bmatrix}
        \right) \Pi
        \\
    & = \Pi^\tr
        \begin{bmatrix}a & v^\tr\\ v & a \Lambda^\tr (R+R^\tr) \Lambda\end{bmatrix}  
        \Pi.
\end{aligned}
\end{equation*}
Since $\Lambda^\tr (R+R^\tr) \Lambda$ has the same sparsity pattern as $P'$ and $v = \left[\begin{smallmatrix}q\\ 0\end{smallmatrix}\right]$, the $2\times 2$ block matrix on the right-hand side has the same sparsity pattern as the right-hand side of~\cref{Eq:Induction_1}, hence as $\Pi P \Pi^\tr$. We conclude that $T^\tr (L + L^\tr) T$ has the same sparsity pattern as $P$, as required.
\section{Proof of \texorpdfstring{\cref{lemma:sos-symmetric-matrix}}{Lemma~\ref{lemma:sos-symmetric-matrix}}}
\label{app:sos-symmetric-matrix-lemma}

It suffices to consider $2\omega = \deg(S)$. Since $S$ is SOS and $S(-x)=S(x)$, symmetry arguments~\cite{lofberg2009pre} imply that $S(x)= S_1(x) + S_2(x)$ with $S_1(x) = H_e(x)^\tr H_e(x)$, $S_2(x) = H_o(x)^\tr H_o(x)$, and
\begin{align*}
H_e(x) &= \sum_{\substack{\alpha \in \mathbb{N}^n\\\abs{\alpha} \leq \omega\text{ \& even}}} \!\! A_\alpha \, x^\alpha,
&
H_o(x) &= \sum_{\substack{\alpha \in \mathbb{N}^n\\\abs{\alpha} \leq \omega\text{ \& odd}}} \!\! B_\alpha \, x^\alpha,
\end{align*}
where $A_\alpha$ and $B_\alpha$ are $m \times m$ coefficient matrices. Therefore, we only need to show that $\|x\|^{2\omega} S_1(rx \|x\|^{-1})$ and $\|x\|^{2\omega} S_2(rx \|x\|^{-1})$ are SOS and homogeneous of degree $2\omega$. If $r=0$, this is trivial. If $r>0$, set $A_\alpha':= r^{\abs{\alpha}}A_\alpha$ and write
\begin{equation}\label{e:even-matrix-factorization}
\|x\|^{2\omega} S_1\!\left(\frac{rx}{\|x\|}\right) = 
\left( \sum_{\abs{\alpha} \leq \omega, \text{even}} \!\!\!\! A_\alpha' \, x^\alpha \|x\|^{\omega - \abs{\alpha}} \right)^\tr\!
\left( \sum_{\abs{\alpha} \leq \omega, \text{even}} \!\!\!\! A_\alpha' \, x^\alpha \|x\|^{\omega - \abs{\alpha}} \right).
\end{equation}
To show that this matrix is SOS, we distinguish two cases. If $\omega$ is even, then so is $\omega-\abs{\alpha}$, and $\|x\|^{\omega-\abs{\alpha}}$ is a polynomial of $x$. In this case, each term in brackets on the right-hand side of~\cref{e:even-matrix-factorization} is a polynomial matrix, so $\|x\|^{2\omega} S_1(x \|x\|^{-1})$ is SOS. If $\omega$ is odd, instead,~\cref{e:even-matrix-factorization} can be written as
\begin{equation*}
\|x\|^{2\omega} S_1\!\left(\frac{rx}{\|x\|}\right) = 
\|x\|^2\left( \sum_{\abs{\alpha} \leq \omega, \text{even}} \!\!\!\! A_\alpha' \, x^\alpha \|x\|^{\omega - \abs{\alpha}-1} \right)^\tr\!
\left( \sum_{\abs{\alpha} \leq \omega, \text{even}} \!\!\!\! A_\alpha' \, x^\alpha \|x\|^{\omega - \abs{\alpha}-1} \right).
\end{equation*}
Since $\omega - \abs{\alpha}-1$ is even, the right-hand side is an SOS polynomial matrix, so $\|x\|^{2\omega} S_1(rx \|x\|^{-1})$ is SOS. In both cases, the matrix in the right-hand side of~\cref{e:even-matrix-factorization} is clearly homogeneous of degree $2\omega$, and so is the left-hand side. Analogous reasoning proves that $\|x\|^{2\omega} S_2(rx \|x\|^{-1})$ is SOS and homogeneous with degree $2\omega$, concluding the proof.

\pdfbookmark{References}{bookmark:references}
\bibliography{references.bib}

\begin{thebibliography}{10}

\bibitem{chesi2010lmi}
G.~Chesi.
\newblock {LMI techniques for optimization over polynomials in control: a
  survey}.
\newblock {\em IEEE Trans. Automat. Control}, 55(11):2500--2510, 2010.

\bibitem{lasserre2010moments}
J.-B. Lasserre.
\newblock {\em {Moments, Positive Polynomials and their Applications}}.
\newblock Imperial College Press, 2010.

\bibitem{henrion2011inner}
Didier Henrion and Jean-Bernard Lasserre.
\newblock Inner approximations for polynomial matrix inequalities and robust
  stability regions.
\newblock {\em IEEE Trans. Automat. Control}, 57(6):1456--1467, 2011.

\bibitem{scherer2006lmi}
Carsten~W Scherer.
\newblock {LMI relaxations in robust control}.
\newblock {\em Eur. J. Control}, 12(1):3--29, 2006.

\bibitem{Murty1987}
K.~G. Murty and S.~N. Kabadi.
\newblock {Some NP-complete problems in quadratic and nonlinear programming}.
\newblock {\em Math. Program.}, 39(2):117--129, 1987.

\bibitem{gatermann2004symmetry}
K.~Gatermann and P.~A. Parrilo.
\newblock Symmetry groups, semidefinite programs, and sums of squares.
\newblock {\em J. Pure Appl. Algebra}, 192(1-3):95--128, 2004.

\bibitem{kojima2003sums}
M~Kojima.
\newblock Sums of squares relaxations of polynomial semidefinite programs.
\newblock Research Reports on Mathematical and Computing Sciences Series B:
  Operations Research B-397, Tokyo Institute of Technology, 2003.

\bibitem{parrilo2013semidefinite}
P.~A. Parrilo.
\newblock {Polynomial optimization , sums of squares and applications}.
\newblock In G.~Blekherman, P.~A.. Parrilo, and R.~R.. Thomas, editors, {\em
  Semidefinite optimization and convex algebraic geometry}, chapter~3, pages
  47--157. SIAM, 1st edition, 2013.

\bibitem{scherer2006matrix}
C.~Scherer and C.~Hol.
\newblock Matrix sum-of-squares relaxations for robust semi-definite programs.
\newblock {\em Math. Program.}, 107:189--211, 2006.

\bibitem{boyd2004convex}
S.~Boyd and L.~Vandenberghe.
\newblock {\em Convex optimization}.
\newblock Cambridge University Press, 2004.

\bibitem{nemirovski2006advances}
A.~Nemirovski.
\newblock {Advances in convex optimization: Conic programming}.
\newblock In {\em International Congress of Mathematicians}, volume~1, pages
  413--444, 2006.

\bibitem{nesterov1994interior}
Y.~Nesterov and A.~Nemirovski.
\newblock {\em {Interior-Point Polynomial Algorithms in Convex Programming}}.
\newblock SIAM, 1994.

\bibitem{vandenberghe1996semidefinite}
L.~Vandenberghe and S.~Boyd.
\newblock {Semidefinite Programming}.
\newblock {\em SIAM Rev.}, 38(1):49--95, 1996.

\bibitem{fukuda2001exploiting}
M.~Fukuda, M.~Kojima, K.~Murota, and K.~Nakata.
\newblock {Exploiting sparsity in semidefinite programming via matrix
  completion I: General framework}.
\newblock {\em SIAM J. Optim.}, 11(3):647--674, 2001.

\bibitem{nakata2003exploiting}
K.~Nakata, K.~Fujisawa, M.~Fukuda, M.~Kojima, and K.~Murota.
\newblock {Exploiting sparsity in semidefinite programming via matrix
  completion II: Implementation and numerical results}.
\newblock {\em Math. Program. B}, 95(2):303--327, 2003.

\bibitem{sun2014decomposition}
Y.~Sun, M.~S. Andersen, and L.~Vandenberghe.
\newblock Decomposition in conic optimization with partially separable
  structure.
\newblock {\em SIAM J. Optim.}, 24(2):873--897, 2014.

\bibitem{vandenberghe2015chordal}
L.~Vandenberghe, M.~S. Andersen, et~al.
\newblock Chordal graphs and semidefinite optimization.
\newblock {\em Found. Trends Optim.}, 1(4):241--433, 2015.

\bibitem{zheng2020chordal}
Y.~Zheng, G.~Fantuzzi, A.~Papachristodoulou, P.~Goulart, and A.~Wynn.
\newblock {Chordal decomposition in operator-splitting methods for sparse
  semidefinite programs}.
\newblock {\em Math. Program.}, 180:489--532, 2020.

\bibitem{putinar1993positive}
M.~Putinar.
\newblock Positive polynomials on compact semi-algebraic sets.
\newblock {\em Indiana Univ. Math. J.}, 42(3):969--984, 1993.

\bibitem{PutinarVasilescu1999}
M.~Putinar and F.-H. Vasilescu.
\newblock {Positive polynomials on semi-algebraic sets}.
\newblock {\em C. R. Math. Acad. Sci. Paris}, 328(7):585--589, 1999.

\bibitem{Reznick1995}
B~Reznick.
\newblock {Uniform denominators in Hilbert's seventeenth problem}.
\newblock {\em Math. Z.}, 220:75--97, 1995.

\bibitem{artin1927zerlegung}
E.~Artin.
\newblock {{\"U}ber die Zerlegung definiter Funktionen in Quadrate}.
\newblock {\em Abh. Math. Semin. Univ. Hambg.}, 5(1):100--115, 1927.

\bibitem{Du2017}
T.~H.-B. Du.
\newblock {A note on Positivstellens\"atze for matrix polynomials}.
\newblock {\em East-West J. Math.}, 19(2):171--182, 2017.

\bibitem{Dinh2021}
T.~H. Dinh, M.~T. Ho, and C.~T. Le.
\newblock {Positivstellens{\"{a}}tze for polynomial matrices}.
\newblock {\em Positivity}, 2021.

\bibitem{agler1988positive}
J.~Agler, W.~Helton, S.~McCullough, and L.~Rodman.
\newblock Positive semidefinite matrices with a given sparsity pattern.
\newblock {\em Linear Algebra Appl.}, 107:101--149, 1988.

\bibitem{andersen2014robust}
M.~S. Andersen, S.~K. Pakazad, A.~Hansson, and A.~Rantzer.
\newblock Robust stability analysis of sparsely interconnected uncertain
  systems.
\newblock {\em IEEE Trans. Automat. Control}, 59(8):2151--2156, 2014.

\bibitem{Zheng2017Scalable}
Y.~Zheng, R.~P Mason, and A.~Papachristodoulou.
\newblock Scalable design of structured controllers using chordal
  decomposition.
\newblock {\em IEEE Trans. Automat. Control}, 63(3):752--767, 2018.

\bibitem{andersen2014reduced}
M.~S. Andersen, A.~Hansson, and L.~Vandenberghe.
\newblock Reduced-complexity semidefinite relaxations of optimal power flow
  problems.
\newblock {\em IEEE Trans. Power Syst.}, 29(4):1855--1863, 2014.

\bibitem{molzahn2013implementation}
D.~K. Molzahn, J.~T. Holzer, B.~C. Lesieutre, and C.~L. DeMarco.
\newblock Implementation of a large-scale optimal power flow solver based on
  semidefinite programming.
\newblock {\em IEEE Trans. Power Syst.}, 28(4):3987--3998, 2013.

\bibitem{reznick1978extremal}
B.~Reznick.
\newblock {Extremal PSD forms with few terms}.
\newblock {\em Duke Math. J.}, 45(2):363--374, 1978.

\bibitem{permenter2014basis}
F.~Permenter and P.~A. Parrilo.
\newblock {Basis selection for SOS programs via facial reduction and polyhedral
  approximations}.
\newblock In {\em Proceedings of the 53\textsuperscript{rd} IEEE Conference on
  Decision and Control}, pages 6615--6620, 2014.

\bibitem{lofberg2009pre}
J.~L\"{o}fberg.
\newblock Pre-and post-processing sum-of-squares programs in practice.
\newblock {\em IEEE Trans. Automat. Control}, 54(5):1007--1011, 2009.

\bibitem{Riener2013}
C.~Riener, T.~Theobald, L.~J. Andr{\'{e}}n, and J.~B. Lasserre.
\newblock {Exploiting symmetries in SDP-relaxations for polynomial
  optimization}.
\newblock {\em Math. Oper. Res.}, 38(1):122--141, 2013.

\bibitem{waki2006sums}
H.~Waki, S.~Kim, M.~Kojima, and M.~Muramatsu.
\newblock Sums of squares and semidefinite program relaxations for polynomial
  optimization problems with structured sparsity.
\newblock {\em SIAM J. Optim.}, 17(1):218--242, 2006.

\bibitem{lasserre2006convergent}
J.-B. Lasserre.
\newblock {Convergent SDP-relaxations in polynomial optimization with
  sparsity}.
\newblock {\em SIAM J. Optim.}, 17(3):822--843, 2006.

\bibitem{grimm2007note}
D.~Grimm, T.~Netzer, and M.~Schweighofer.
\newblock A note on the representation of positive polynomials with structured
  sparsity.
\newblock {\em Arch. Math. (Basel)}, 89(5):399--403, 2007.

\bibitem{klep2019sparse}
I.~Klep, V.~Magron, and J.~Povh.
\newblock {Sparse noncommutative polynomial optimization}.
\newblock {\em Math. Program.}, 01:1--37, 2021.

\bibitem{josz2018lasserre}
C.~Josz and D.~K. Molzahn.
\newblock Lasserre hierarchy for large scale polynomial optimization in real
  and complex variables.
\newblock {\em SIAM J. Optim.}, 28(2):1017--1048, 2018.

\bibitem{Wang2019term-sparsity}
J.~Wang, H.~Li, and B.~Xia.
\newblock {A new sparse SOS decomposition algorithm based on term sparsity}.
\newblock {\em Proceedings of the International Symposium on Symbolic and
  Algebraic Computation, ISSAC}, pages 347--354, 2019.

\bibitem{Wang2020chordal-tssos}
J.~Wang, V.~Magron, and J.-B. Lasserre.
\newblock {Chordal-TSSOS: a moment-SOS hierarchy that exploits term sparsity
  with chordal extension}.
\newblock {\em SIAM J. Optim.}, 31(1):114--141, 2021.

\bibitem{Wang2020tssos}
J.~Wang, V.~Magron, and J.-B. Lasserre.
\newblock {TSSOS: A moment-SOS hierarchy that exploits term sparsity}.
\newblock {\em SIAM J. Optim.}, 31(1):30--58, 2021.

\bibitem{Wang2020cs-tssos}
J.~Wang, V.~Magron, J.~B Lasserre, and N.~H.~A. Mai.
\newblock {CS-TSSOS: Correlative and term sparsity for large-scale polynomial
  optimization}.
\newblock \href{http://arxiv.org/abs/2005.02828}{arXiv:2005.02828} [math.OC],
  2020.

\bibitem{zheng2018sparse}
Y.~Zheng, G.~Fantuzzi, and A.~Papachristodoulou.
\newblock {Sparse sum-of-squares (SOS) optimization: A bridge between
  DSOS/SDSOS and SOS optimization for sparse polynomials}.
\newblock In {\em Proceedings of the 2019 American Control Conference}, pages
  5513--5518, 2019.

\bibitem{kakimura2010direct}
N.~Kakimura.
\newblock A direct proof for the matrix decomposition of chordal-structured
  positive semidefinite matrices.
\newblock {\em Linear Algebra Appl.}, 433(4):819--823, 2010.

\bibitem{Schmudgen2009noncommutative}
K.~Schm{\"u}dgen.
\newblock Noncommutative real algebraic geometry some basic concepts and first
  ideas.
\newblock In {\em Emerging Applications of Algebraic Geometry}, pages 325--350.
  Springer, 2009.

\bibitem{aylward2007explicit}
E.~M. Aylward, S.~M. Itani, and P.~A. Parrilo.
\newblock {Explicit {SOS} decompositions of univariate polynomial matrices and
  the Kalman--Yakubovich--Popov lemma}.
\newblock In {\em Proceedings of the 46\textsuperscript{th} IEEE Conference on
  Decision and Control}, pages 5660--5665, 2007.

\bibitem{Motzkin1967}
T.~S. Motzkin.
\newblock {The arithmetic-geometric inequality}.
\newblock In {\em Inequalities (Proc. Sympos. Wright-Patterson Air Force Base,
  Ohio, 1965)}, pages 205--224, 1967.

\bibitem{Laurent2009}
M.~Laurent.
\newblock {Sums of Squares, Moment Matrices and Optimization Over Polynomials}.
\newblock In M.~Putinar and S.~Sullivant, editors, {\em Emerging Applications
  of Algebraic Geometry}, The IMA Volumes in Mathematics and its Applications,
  pages 157--270. Springer, New York, NY, 2009.

\bibitem{lasserre2015introduction_book}
J.-B. Lasserre.
\newblock {\em {An introduction to polynomial and semi-algebraic
  optimization}}.
\newblock Cambridge University Press, 2015.

\bibitem{Nie2008}
J.~Nie and J.~Demmel.
\newblock {Sparse SOS relaxations for minimizing functions that are summations
  of small polynomials}.
\newblock {\em SIAM J. Optim.}, 19(4):1534--1558, 2008.

\bibitem{mai2020sparse}
N.~H.~A. Mai, V.~Magron, and J.-B. Lasserre.
\newblock {A sparse version of Reznick's Positivstellensatz}.
\newblock \href{https://arXiv.org/abs/2002.05101}{arXiv:2002.05101} [math.OC],
  2020.

\bibitem{andersen2000mosek}
E.~D. Andersen and K.~D. Andersen.
\newblock The {MOSEK} interior point optimizer for linear programming: an
  implementation of the homogeneous algorithm.
\newblock In {\em High performance optimization}, pages 197--232. Springer,
  2000.

\bibitem{lofberg2004yalmip}
J.~L{\"o}fberg.
\newblock {YALMIP: A toolbox for modeling and optimization in MATLAB}.
\newblock In {\em Proceedings of the IEEE International Symposium on
  Computer-Aided Control System Design}, pages 284--289, 2004.

\bibitem{mason2015chordal}
Richard Mason.
\newblock {\em A chordal sparsity approach to scalable linear and nonlinear
  systems analysis}.
\newblock PhD thesis, University of Oxford, 2015.

\bibitem{rose1970triangulated}
D.~J. Rose.
\newblock Triangulated graphs and the elimination process.
\newblock {\em J. Math. Anal. Appl}, 32(3):597--609, 1970.

\bibitem{zheng2018decomposition}
Y.~Zheng, G.~Fantuzzi, and A.~Papachristodoulou.
\newblock Decomposition and completion of sum-of-squares matrices.
\newblock In {\em Proceedings of the 57\textsuperscript{th} IEEE Conference on
  Decision and Control}, pages 4026--4031, 2018.

\end{thebibliography}

\end{document}